\title[Marginally trapped surfaces]{{\small{Marginally trapped surfaces in ${\mathbb{L}}^{4}$ and three Weierstrass representations}}}
\author{Hojoo Lee}
\address{Department of Mathematics and Institute of Pure and Applied Mathematics, Jeonbuk National University,
Jeonju 54896, Korea}
\email{compactkoala@gmail.com, kiarostami@jbnu.ac.kr}
\newtheorem{thm}{Theorem} 
\newtheorem{lem}[thm]{Lemma}
\newtheorem{cor}[thm]{Corollary}
\newtheorem{prop}[thm]{Proposition}
\theoremstyle{definition}
\newtheorem{rem}{Remark}[section]
\newtheorem{exmp}{Example}[section]
\newtheorem{defn}{Definition}[section]
\numberwithin{equation}{section}
\begin{document}


\keywords{Marginally trapped surfaces, conformal representation, mean curvature vector}

\begin{abstract}
 We construct new integrable systems to present Weierstrass type representations for spacelike surfaces whose mean 
 curvature vector $\mathbf{H}$ satisfies the null condition $\langle \mathbf{H},  \mathbf{H} \rangle=0$ in the four dimensional Lorentz-Minkowski space ${\mathbb{L}}^{4}$.  
 Our new Weierstrass presentations extend simultaneously classical Weierstrass representations (of the first kind and the second kind) for maximal surfaces in ${\mathbb{L}}^{3}$ and 
 minimal surfaces in ${\mathbb{R}}^{3}$. We solve a linear partial differental equation to construct explicit examples of marginally trapped surfaces with nowhere vanishing 
 mean curvature vector.
\end{abstract}

\maketitle

  \tableofcontents
 
  \section{Motivation and main results}
  
  A spacelike space in the four dimensional Lorentz-Minkowski space ${\mathbb{L}}^{4}$ is marginally trapped
 if its mean curvature vector $\mathbf{H}$ verifies the null condition:
  \begin{equation*} 
      \langle \mathbf{H},  \mathbf{H} \rangle=0.
  \end{equation*} 
  We briefly review known conformal representations \cites{AGM2005, DFS2021, Liu 2013} for marginally trapped surfaces.    
  
The Aledo-G\'{a}lvez-Mira representation \cite[Theorem 9]{AGM2005} generates marginally trapped surfaces of \emph{Bryant type} in ${\mathbb{L}}^{4}$. Their 
conformal representation yields marginally trapped surfaces satisfying two geometric conditions that they have flat normal bundle and are locally isometric to 
some minimal surfaces in Euclidean space ${\mathbb{R}}^{3}$ or some maximal surfaces in  Lorentz-Minkowski space ${\mathbb{L}}^{3}$. The Aledo-G\'{a}lvez-Mira 
representation extends simultaneously the Bryant representation \cite[Theorem A]{Bryant 1987} for surfaces with mean curvature one in hyperbolic space ${\mathbb{H}}^{3} \subset {\mathbb{L}}^{4}$ and the representation \cite[Theorem 7.1]{AK1999} for spacelike surfaces with mean curvature one in de-Sitter space ${\mathbb{S}}^{3}_{1} \subset {\mathbb{L}}^{4}$.

 Recently, Dussan, Filho, Sim\~{o}es \cite[Section 5]{DFS2021} presented an interesting method to construct explicit examples of marginally trapped surfaces sitting in ${\mathbb{H}}^{3} \subset {\mathbb{L}}^{4}$. They solved a Riccati partial differential equation for the triple of a holomorphic function and two $\mathbb{C}$-valued functions. The conformal representation due to Liu \cite[Theorem 4.2]{Liu 2013} for general marginally trapped surfaces in ${\mathbb{L}}^{4}$ requires to solve a system of partial differential 
equations for the triple of a holomorphic function and two $\mathbb{C}$-valued functions. Our work is strongly motivated by the Liu representation.
    
 We sketch the main results of this paper, the first one in a series of papers on the marginally trapped surfaces.   We present the three Weierstrass type 
  representations (Theorem \ref{Poisson 01}, Corollary \ref{Poisson 02}, Corollary \ref{Poisson 03}) for marginally trapped surfaces in the four dimensional 
  Lorentz-Minkowski space ${\mathbb{L}}^{4}$. Our representations require to solve the \emph{linear} partial differential equations.  
  
  Inspired by the Liu integrable system (\cite[Theorem 4.2]{Liu 2013} and Lemma \ref{Liu}) for 
  the triple of a holomorphic function and two $\mathbb{C}$-valued functions, we construct integrable systems (Definition \ref{FWD01} and Defintion \ref{FWD02}) for 
  the triple of a holomorphic function and two $\mathbb{R}$-valued functions. The fundamental features of our integrable systems are investigated in 
  Section \ref{integrable systems} and Section \ref{parameters}.
  
  The key geometric idea hidden in our algebraic reduction of the Liu integrable system is revealed in our Weierstrass type representation (in Corollary \ref{Poisson 02}) for marginally trapped 
  surfaces in ${\mathbb{L}}^{4}$. Our conformal representation naturally extends the Weierstrass type representation of  \emph{the second kind} (due to O. 
  Kobayashi \cite[Corollary 1.3]{Kobayashi 1983}) for maximal surfaces in ${\mathbb{L}}^{3}$. (See Remark \ref{Poisson 02 Remarks}.)
  
 Let $h(u+iv)$ be an arbitrary prescribed nowhere vanishing holomorphic function. Our conformal representation in Corollary \ref{Poisson 02} indicates that the pair  $\left(\mathcal{M}(u, v), \mathcal{N}(u, v) \right)$ of the $\mathbb{R}$-valued 
  functions, which solves the \emph{linear} partial differential equation  
    \begin{equation*}   \label{IntroP01} 
       {\mathcal{M}}_{uu} +  {\mathcal{M}}_{vv}= \textrm{Re} \,  h  \,  \left(  {\mathcal{N}}_{uu} +  {\mathcal{N}}_{vv}\right) , 
   \end{equation*}  
   yields the marginally trapped surface in ${\mathbb{L}}^{4}$. (The holomorphic function $g:=\frac{1}{\,h\,}$ 
 can be viewed as a complexified null Gauss map of the marginally trapped surface. See Remark \ref{Poisson 02 Remarks}.) Our representation in 
   Corollary \ref{Poisson 03} can be viewed a natural generalization of classical representations of \emph{the first kind} for maximal surfaces
    in ${\mathbb{L}}^{3}$ and minimal surfaces in ${\mathbb{R}}^{3}$. (See Remark \ref{Poisson 03 Remarks}.)

    Solving the above linear partial differential equation yields explicit examples of marginally trapped surfaces in ${\mathbb{L}}^{4}$. In particular, in 
    Section \ref{deformed examples}, we construct two families of marginally trapped surfaces with nowhere vanishing mean curvature vector. Both families 
    contain a \emph{catenoid cousin} with mean curvature one in the three dimensional hyperbolic space ${\mathbb{H}}^{3} \subset {\mathbb{L}}^{4}$.

  \section{Two equivalent integrable systems} \label{integrable systems}
 
 Throughout this section, $\Omega \subset {\mathbb{R}}^2 \equiv \mathbb{C}$ denotes a simply connected domain with the complex coordinate $z$. 
 
 \begin{defn}[\textbf{Weierstrass data of the first kind}] \label{FWD01}
We call the triple $\left(g, \, \mathcal{P}, \, \mathcal{Q}\right)$ the Weierstrass data of the first kind if a function $g : \Omega \to \mathbb{C}-\left\{0\right\}$ and two ${\mathcal{C}}^{2}$ functions  $\mathcal{P},  \mathcal{Q}: \Omega \to \mathbb{R}$ satisfy the three conditions
 \begin{equation*}  \label{FW01}
 \begin{cases}
    g_{\overline{\,z\,} } =0, \\
    {\mathcal{P}}_{z \overline{\,z\,}} =  {\vert \, g \, \vert}^{2}  \;  {\mathcal{Q}}_{z \overline{\,z\,}}, \\ 
    {\mathcal{P}}_{z}  -  {\vert \, g \, \vert}^{2} \,  {\mathcal{Q}}_{z} \neq 0.
\end{cases}    
 \end{equation*}
\end{defn}

\begin{defn}[\textbf{Weierstrass data of the second kind}] \label{FWD02}
We call the triple $\left(h, \, \mathcal{M}, \, \mathcal{N}\right)$ the Weierstrass data of the second kind if a function $h : \Omega \to \mathbb{C}-\left\{0\right\}$ and two ${\mathcal{C}}^{2}$ functions  $\mathcal{M},  \mathcal{N}: \Omega \to \mathbb{R}$ satisfy the three conditions
 \begin{equation*}  \label{FW02}
 \begin{cases}
    h_{\overline{\,z\,} } =0, \\
   {\mathcal{M}}_{ z \overline{\,z\,} }= \left( \textrm{Re} \,  h \right)  \,  {\mathcal{N}}_{ z \overline{\,z\,} }, \\ 
  {\mathcal{M}}_{z} - \left( \textrm{Re} \,  h \right)  \,  { \mathcal{N}}_{z} \neq 0.
\end{cases}    
 \end{equation*}
\end{defn}

 \begin{thm}[\textbf{Equivalence transformation from the first data to the second data}] \label{EAmain}
Let $\left(g, \, \mathcal{P}, \, \mathcal{Q}\right)$ be a Weierstrass data of the first kind. We define $h:=\frac{1}{\,g\,}$ and $\mathcal{N}:=2 \mathcal{P}$. 
 Then, there exists a ${\mathcal{C}}^{2}$ function ${\mathcal{M}}_{\lambda}: \Omega \to \mathbb{R}$ such that 
 \begin{equation*}  \label{EA01}
      {\mathcal{M}}_{z} = \frac{1}{\, g \,} \,  {\mathcal{P}}_{z} + g \, {\mathcal{Q}}_{z}.  
 \end{equation*}
 Moreover, the triple $\left(h, \, \mathcal{M}, \, \mathcal{N}\right)$ becomes a Weierstrass data of the second kind such that
  \begin{equation*}  \label{EA02}
    \, {\mathcal{M}}_{z} -   \left( \textrm{Re} \,  h \right)  \,  { \mathcal{N}}_{z} \, = 
     -  \frac{1}{\, \overline{\,g\,} \,} \left( \,  {\mathcal{P}}_{z}  -  {\vert \, g \, \vert}^{2} \,  {\mathcal{Q}}_{z} \, \right).
  \end{equation*}   
\end{thm}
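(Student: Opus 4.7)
The proof splits cleanly into an existence step (producing $\mathcal{M}$) followed by a verification step (checking the three defining conditions of a second kind data). My plan is the following.

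First I would address existence. Define the complex $1$-form $\omega := \tfrac{1}{g}\,{\mathcal{P}}_{z} + g\,{\mathcal{Q}}_{z}$, so that the equation ${\mathcal{M}}_{z}=\omega$ is equivalent to asking that the real $1$-form $\alpha := \omega\, dz + \overline{\omega}\, d\overline{z} = 2\,\textrm{Re}(\omega\, dz)$ be exact. Since $\Omega$ is simply connected, by the Poincar\'{e} lemma it suffices to verify that $\alpha$ is closed, which reduces to showing that $\omega_{\overline{z}}$ is a real valued function. Here I would use the first two conditions of Definition \ref{FWD01}: the holomorphicity $g_{\overline{z}}=0$ kills the derivatives of $g$ and $\tfrac{1}{g}$, leaving $\omega_{\overline{z}} = \tfrac{1}{g}\,{\mathcal{P}}_{z\overline{z}}+g\,{\mathcal{Q}}_{z\overline{z}}$, and then the compatibility ${\mathcal{P}}_{z\overline{z}} = |g|^{2}\,{\mathcal{Q}}_{z\overline{z}}$ collapses this to $(\overline{g}+g)\,{\mathcal{Q}}_{z\overline{z}} = 2\,\textrm{Re}(g)\,{\mathcal{Q}}_{z\overline{z}}$, manifestly real since $\mathcal{Q}$ is. Integration of $\alpha$ then produces a real valued $\mathcal{C}^{2}$ primitive $\mathcal{M}$, unique up to an additive real constant.

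Next I would verify the three conditions for $(h,\mathcal{M},\mathcal{N})$ to be a Weierstrass data of the second kind. Holomorphicity $h_{\overline{z}}=0$ follows immediately from $g_{\overline{z}}=0$ and $g\neq 0$. For the Poisson-type identity, I would compute ${\mathcal{M}}_{z\overline{z}} = \omega_{\overline{z}} = 2\,\textrm{Re}(g)\,{\mathcal{Q}}_{z\overline{z}}$ from the existence step and compare with $\textrm{Re}(h)\,{\mathcal{N}}_{z\overline{z}}$, using the two elementary identities $\textrm{Re}(h)=\textrm{Re}(g)/|g|^{2}$ and ${\mathcal{N}}_{z\overline{z}}=2\,|g|^{2}{\mathcal{Q}}_{z\overline{z}}$ (the latter by ${\mathcal{N}}=2\mathcal{P}$ and the first kind compatibility); the two expressions coincide.

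For the third condition and the explicit formula, I would expand $\textrm{Re}(h)=\tfrac{1}{2}(1/g+1/\overline{g})$, so that
\begin{equation*}
   \textrm{Re}(h)\,{\mathcal{N}}_{z} \;=\; \Bigl(\tfrac{1}{g}+\tfrac{1}{\overline{g}}\Bigr)\,{\mathcal{P}}_{z},
\end{equation*}
and subtract from ${\mathcal{M}}_{z}=\tfrac{1}{g}{\mathcal{P}}_{z}+g\,{\mathcal{Q}}_{z}$. The $\tfrac{1}{g}{\mathcal{P}}_{z}$ terms cancel, leaving $g\,{\mathcal{Q}}_{z}-\tfrac{1}{\overline{g}}{\mathcal{P}}_{z} = -\tfrac{1}{\overline{g}}({\mathcal{P}}_{z}-|g|^{2}{\mathcal{Q}}_{z})$, which is exactly the claimed identity; its non-vanishing is then inherited from the third condition in Definition \ref{FWD01} together with $g\neq 0$.

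The only conceptually non-trivial step is the existence of a real primitive $\mathcal{M}$, so the compatibility computation showing $\omega_{\overline{z}}\in\mathbb{R}$ is the heart of the argument; everything else is an algebraic manipulation that exploits the identity $\textrm{Re}(1/g) = \textrm{Re}(g)/|g|^{2}$.
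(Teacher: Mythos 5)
Your proposal is correct and takes essentially the same route as the paper's proof: both introduce the potential $\mathcal{E}=\tfrac{1}{g}\mathcal{P}_{z}+g\,\mathcal{Q}_{z}$, show its $\overline{z}$-derivative is real using $g_{\overline{z}}=0$ and $\mathcal{P}_{z\overline{z}}=|g|^{2}\mathcal{Q}_{z\overline{z}}$, invoke Poincar\'{e}'s lemma on the simply connected $\Omega$ to obtain the real primitive $\mathcal{M}$, and verify the second-kind conditions by the same algebra built on $\textrm{Re}\,(1/g)=\textrm{Re}\,(g)/|g|^{2}$. Your recasting of the reality of $\omega_{\overline{z}}$ as closedness of the real $1$-form $2\,\textrm{Re}(\omega\,dz)$ (and writing it as $2\,\textrm{Re}(g)\,\mathcal{Q}_{z\overline{z}}$ rather than $2\,\textrm{Re}(h)\,\mathcal{P}_{z\overline{z}}$) only makes explicit the mechanism the paper leaves implicit.
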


\begin{proof}
The function $g : \Omega \to \mathbb{C}-\left\{0\right\}$ and two ${\mathcal{C}}^{2}$ functions  $\mathcal{P},  \mathcal{Q}: \Omega \to \mathbb{R}$ satisfy  
 \begin{equation*}  \label{EAp02}
    g_{\overline{\,z\,} } =0, \quad
    {\mathcal{P}}_{z \overline{\,z\,}} =  {\vert \, g \, \vert}^{2}  \;  {\mathcal{Q}}_{z \overline{\,z\,}}, \quad 
    {\mathcal{P}}_{z}  -  {\vert \, g \, \vert}^{2} \,  {\mathcal{Q}}_{z} \neq 0.    
 \end{equation*}
It is immediate that $h_{\overline{\,z\,} }= {\left( \, \frac{1}{\,g\,}\, \right)}_{\overline{\,z\,} } =0$. We introduce the function $  \mathcal{E}: \Omega \to \mathbb{R}$ by 
 \begin{equation*}  \label{EAp03}
  \mathcal{E}:=  \frac{1}{\, g \,} \,  {\mathcal{P}}_{z} + g \, {\mathcal{Q}}_{z}.
 \end{equation*}
We have 
 \begin{equation*}  \label{EAp04}
      {\mathcal{E}}_{ \overline{\,z\,} } =  \frac{1}{\, g \,} \,  {\mathcal{P}}_{z \overline{\,z\,}} + g \, {\mathcal{Q}}_{z \overline{\,z\,}}
      =  \left(\, \frac{1}{\, g \,} \,     + \frac{1}{\,\overline{\, g\,} \,} \, \right) \, {\mathcal{P}}_{z \overline{\,z\,}}
      =  2 \left( \textrm{Re} \,  h \right)  {\mathcal{P}}_{z \overline{\,z\,}}    \in \mathbb{R}.
 \end{equation*}
 Since the domain $\Omega$ is simply connected, by Poincar\'{e}'s Lemma, this 
    implies the existence of an 
 ${\mathbb{R}}$-valued function ${\mathcal{M}}_{\lambda}$ defined on $\Omega$ such that
 \begin{equation*}  \label{EAp05}
       {\mathcal{M}}_{z} =  \mathcal{E}.
 \end{equation*}       
We have 
 \begin{equation*}  \label{EAp06}
       {\mathcal{M}}_{z  \overline{\,z\,} } =  {\mathcal{E}}_{ \overline{\,z\,} }   =  2 \left( \textrm{Re} \,  h \right)  {\mathcal{P}}_{z \overline{\,z\,}}  = \left(  \textrm{Re} \,  h  \, \right) {\mathcal{N}}_{z \overline{\,z\,}} 
 \end{equation*}     
 and  
 \begin{eqnarray*}  \label{EAp07}
   {\mathcal{M}}_{z} -   \left( \textrm{Re} \,  h \right)  \,  { \mathcal{N}}_{z} 
    &=&  \mathcal{E} - \frac{1}{\,2\,} \left(  \frac{1}{\, g \,} \,     + \frac{1}{\,\overline{\, g\,} \,}  \right)  2  { \mathcal{P}}_{z}  
   =  \frac{1}{\, g \,} \,  {\mathcal{P}}_{z} + g \, {\mathcal{Q}}_{z} - \left(  \frac{1}{\, g \,} \,     + \frac{1}{\,\overline{\, g\,} \,}  \right)  { \mathcal{P}}_{z} \\
     &=& - \frac{1}{\, \overline{\,g\,} \,} \left( \,  {\mathcal{P}}_{z}  -  {\vert \, g \, \vert}^{2} \,  {\mathcal{Q}}_{z} \, \right) \neq 0.
 \end{eqnarray*}
 We conclude that  the triple $\left(h, \, \mathcal{M}, \, \mathcal{N}\right)$ is a Weierstrass data of the second kind:
 \begin{equation*}  \label{EAp08}
    h_{\overline{\,z\,} } =0, \quad
   {\mathcal{M}}_{ z \overline{\,z\,} }=  \left( \textrm{Re} \,  h \right)   \,  {\mathcal{N}}_{ z \overline{\,z\,} }, \quad 
  {\mathcal{M}}_{z} -  \left( \textrm{Re} \,  h \right) \,  { \mathcal{N}}_{z} \neq 0.  
 \end{equation*}
\end{proof}

 \begin{thm}[\textbf{Equivalence transformation from the second data to the first data}]  \label{EBmain}
Let $\left(h, \, \mathcal{M}, \, \mathcal{N}\right)$ be a Weierstrass data of the second kind. We define $g:=\frac{1}{\,h\,}$ and $\mathcal{P}:=\frac{1}{\,2\,} \mathcal{N}$. 
 Then, there exists a ${\mathcal{C}}^{2}$ function ${\mathcal{Q}}_{\lambda}: \Omega \to \mathbb{R}$ such that 
 \begin{equation*}  \label{EB01a}
      {\mathcal{Q}}_{z} = h \,  {\mathcal{M}}_{z} - \frac{\, h^2 \,}{2} {\mathcal{N}}_{z}.  
 \end{equation*}
 Moreover, the triple $\left(g, \, \mathcal{P}, \, \mathcal{Q}\right)$ becomes a Weierstrass data of the first kind such that 
  \begin{equation*}  \label{EB01b}
  \frac{1}{\, \overline{\,g\,} \,} \left( \,  {\mathcal{P}}_{z}  -  {\vert \, g \, \vert}^{2} \,  {\mathcal{Q}}_{z} \, \right)
  = - \left( \,  {\mathcal{M}}_{z} -   \left( \textrm{Re} \,  h \right)  \,  { \mathcal{N}}_{z} \, \right).
  \end{equation*}      
\end{thm}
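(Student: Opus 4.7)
The plan is to mirror the argument of Theorem \ref{EAmain}, with the roles of the two kinds of data interchanged. Since $h : \Omega \to \mathbb{C} \setminus \{0\}$ is holomorphic, its reciprocal $g = 1/h$ is itself nowhere vanishing and holomorphic, so $g_{\overline{\,z\,}} = 0$ is immediate. The substance of the theorem is then the production of an $\mathbb{R}$-valued potential $\mathcal{Q}$ whose $z$-derivative realizes the prescribed expression $h \, \mathcal{M}_z - \frac{h^2}{2} \mathcal{N}_z$.

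To produce $\mathcal{Q}$, I would set
\[
   \mathcal{F} := h \, \mathcal{M}_z - \frac{\,h^2\,}{2} \, \mathcal{N}_z
\]
and compute $\mathcal{F}_{\overline{\,z\,}}$. Using $h_{\overline{\,z\,}} = 0$ together with the compatibility relation $\mathcal{M}_{z \overline{\,z\,}} = (\mathrm{Re} \, h) \, \mathcal{N}_{z \overline{\,z\,}} = \frac{h + \overline{\,h\,}}{2} \mathcal{N}_{z \overline{\,z\,}}$ from the Weierstrass data of the second kind, the purely holomorphic term $\frac{h^2}{2} \mathcal{N}_{z \overline{\,z\,}}$ cancels and one is left with
\[
   \mathcal{F}_{\overline{\,z\,}} = \frac{\,|h|^2\,}{2} \, \mathcal{N}_{z \overline{\,z\,}},
\]
which is real. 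Since $\Omega$ is simply connected, Poincar\'{e}'s Lemma then yields an $\mathbb{R}$-valued $\mathcal{C}^2$ function $\mathcal{Q}$ on $\Omega$ with $\mathcal{Q}_z = \mathcal{F}$, and the same identity automatically gives $\mathcal{Q}_{z \overline{\,z\,}} = \frac{|h|^2}{2} \mathcal{N}_{z \overline{\,z\,}}$.

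The two remaining conditions of a Weierstrass data of the first kind then follow by direct substitution. The compatibility $\mathcal{P}_{z \overline{\,z\,}} = |g|^2 \, \mathcal{Q}_{z \overline{\,z\,}}$ reduces to $\frac{1}{2} \mathcal{N}_{z \overline{\,z\,}} = \frac{1}{|h|^2} \cdot \frac{|h|^2}{2} \mathcal{N}_{z \overline{\,z\,}}$, which is trivial. For the displayed identity I would expand
\[
   \mathcal{P}_z - |g|^2 \mathcal{Q}_z = \frac{1}{2} \mathcal{N}_z - \frac{1}{\,\overline{\,h\,}\,} \mathcal{M}_z + \frac{h}{\,2 \overline{\,h\,}\,} \mathcal{N}_z,
\]
using $h / |h|^2 = 1 / \overline{\,h\,}$, then factor out $-1/\overline{\,h\,}$ and recognize the resulting coefficient $(h + \overline{\,h\,})/2 = \mathrm{Re} \, h$; multiplying through by $1/\overline{\,g\,} = \overline{\,h\,}$ produces the asserted formula, which in particular yields the nonvanishing condition $\mathcal{P}_z - |g|^2 \mathcal{Q}_z \neq 0$.

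The only delicate point is the choice of ansatz for $\mathcal{Q}_z$: the coefficient $-\frac{h^2}{2}$ in front of $\mathcal{N}_z$ is uniquely forced by the requirement that $\mathcal{F}_{\overline{\,z\,}}$ be \emph{real}, so that Poincar\'{e}'s Lemma produces an $\mathbb{R}$-valued primitive. Once this is in place, all verifications are short and structurally dual to those in the proof of Theorem \ref{EAmain}.
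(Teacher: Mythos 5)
Your proposal is correct and takes essentially the same route as the paper's proof: you introduce the same auxiliary function $\mathcal{F} = h\,\mathcal{M}_z - \frac{h^2}{2}\mathcal{N}_z$ (the paper calls it $\mathcal{E}$), show $\mathcal{F}_{\overline{\,z\,}} = \frac{|h|^2}{2}\mathcal{N}_{z\overline{\,z\,}} \in \mathbb{R}$ using holomorphicity of $h$ and the compatibility relation, invoke Poincar\'{e}'s Lemma on the simply connected domain to obtain the $\mathbb{R}$-valued primitive $\mathcal{Q}$, and verify the remaining conditions by the same substitutions. The only cosmetic difference is that the paper computes $\frac{1}{|g|^2}\mathcal{P}_z - \mathcal{Q}_z = h\left((\mathrm{Re}\,h)\,\mathcal{N}_z - \mathcal{M}_z\right)$ while you expand $\mathcal{P}_z - |g|^2\mathcal{Q}_z$ and multiply by $1/\overline{\,g\,} = \overline{\,h\,}$, which is the identical computation up to a nonzero factor.
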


\begin{proof}
The function $h : \Omega \to \mathbb{C}-\left\{0\right\}$ and two ${\mathcal{C}}^{2}$ functions  $\mathcal{M},  \mathcal{N}: \Omega \to \mathbb{R}$ satisfy  
 \begin{equation*}  \label{EBp02}
    h_{\overline{\,z\,} } =0, \quad
   {\mathcal{M}}_{ z \overline{\,z\,} }= \frac{\, h + \overline{\, h\,} \,}{2}  \,  {\mathcal{N}}_{ z \overline{\,z\,} }, \quad 
  {\mathcal{M}}_{z} - \frac{\, h + \overline{\, h\,} \,}{2} \,  { \mathcal{N}}_{z} \neq 0.  
 \end{equation*}
It is immediate that $g_{\overline{\,z\,} }= {\left( \, \frac{1}{\,h\,}\, \right)}_{\overline{\,z\,} } =0$. We introduce the function $  \mathcal{E}: \Omega \to \mathbb{R}$ by 
 \begin{equation*}  \label{EBp03}
  \mathcal{E}:= h \, {\mathcal{M}}_{z} -  \frac{\, h^2 \,}{2} {\mathcal{N}}_{z}.
 \end{equation*}
We have 
 \begin{equation*}  \label{EBp04}
      {\mathcal{E}}_{ \overline{\,z\,} } = h \, {\mathcal{M}}_{z  \overline{\,z\,} } -  \frac{\, h^2 \,}{2} {\mathcal{N}}_{z  \overline{\,z\,} }
            = \left(  h \frac{\, h + \overline{\, h\,} \,}{2} -  \frac{\, h^2 \,}{2}  \right) {\mathcal{N}}_{z  \overline{\,z\,} }
            = \frac{\, {\vert \, h \, \vert}^{2} \,}{2} {\mathcal{N}}_{z  \overline{\,z\,} } \in \mathbb{R}.
 \end{equation*}
 Since the domain $\Omega$ is simply connected, by Poincar\'{e}'s Lemma, this 
    implies the existence of an 
 ${\mathbb{R}}$-valued function ${\mathcal{Q}}_{\lambda}$ defined on $\Omega$ such that
 \begin{equation*}  \label{EBp05}
       {\mathcal{Q}}_{z} =  \mathcal{E}.
 \end{equation*}       
We have 
 \begin{equation*}  \label{EBp06}
       {\mathcal{Q}}_{z  \overline{\,z\,} } =  {\mathcal{E}}_{ \overline{\,z\,} } =  \frac{\, {\vert \, h \, \vert}^{2} \,}{2} {\mathcal{N}}_{z  \overline{\,z\,} }
       = \frac{1}{\, {\vert \, g \, \vert}^{2} \,} {\mathcal{P}}_{z  \overline{\,z\,} }
 \end{equation*}     
 and  
 \begin{equation*}  \label{EBp07}
  \frac{1}{\, {\vert \, g \, \vert}^{2} \,} \,  {\mathcal{P}}_{z}  -  {\mathcal{Q}}_{z} 
 =  \frac{\, {\left\vert \, h \, \right\vert}^{2} \,}{\,2\,}  {\mathcal{N}}_{z}  -   \, \left( \, h \, {\mathcal{M}}_{z} -  \frac{\, h^2 \,}{2} {\mathcal{N}}_{z} \, \right) 
= h \left( \,  \left( \textrm{Re} \,  h \right)  \,  { \mathcal{N}}_{z} -   {\mathcal{M}}_{z}  \, \right) \neq 0.
 \end{equation*}
 We conclude that  the triple $\left(g, \, \mathcal{P}, \, \mathcal{Q}\right)$ is a Weierstrass data of the first kind:
 \begin{equation*}  \label{EBp08}
    g_{\overline{\,z\,} } =0, \quad
    {\mathcal{P}}_{z \overline{\,z\,}} =  {\vert \, g \, \vert}^{2}  \;  {\mathcal{Q}}_{z \overline{\,z\,}}, \quad 
    {\mathcal{P}}_{z}  -  {\vert \, g \, \vert}^{2} \,  {\mathcal{Q}}_{z} \neq 0.    
 \end{equation*}
\end{proof}

\begin{rem} \label{equivalence of two systems}
 It is straightforward to check that the transformation in  Theorem \ref{EBmain} is the reverse transformation in Theorem \ref{EAmain} using the integrability conditions.
\end{rem}

 We present three fundamental deformations of Weierstrass data of the first kind and the second kind.  
 
\begin{prop}[\textbf{Parabolic Deformations}] \label{FIS01}
 Let the triple $\left(g, \, \mathcal{P}, \, \mathcal{Q}\right)$ be a Weierstrass data of the first kind. Let $\lambda \in \mathbb{R}$ be a parameter constant 
 such that $g(z) \neq - \frac{1}{\,  i \lambda   \, } $ for all $z \in \Omega$. We set $g_{ {}_{\lambda} } := \frac{g}{\, 1 + i \lambda g\,}$ and ${\mathcal{P}}_{\lambda} :=\mathcal{P}$.  
 Then, there exists a ${\mathcal{C}}^{2}$ function ${\mathcal{Q}}_{\lambda}: \Omega \to \mathbb{R}$ satisfying the integrability condition 
 \begin{equation*}  \label{FI01p01}
      {\left( \, {\mathcal{Q}}_{\lambda} \, \right)}_{z} = \left( \,  \frac{1}{\,g\,} + i \lambda \, \right) \left(  g    {\mathcal{Q}}_{z} - i \lambda {\mathcal{P}}_{z}      \right).  
 \end{equation*}
 Moreover, the triple $\left(g_{ {}_{\lambda} }, \,{\mathcal{P}}_{\lambda}, \, {\mathcal{Q}}_{\lambda}\right)$ becomes a Weierstrass data of the first kind. 
\end{prop}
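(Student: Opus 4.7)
The plan is to follow the template already set by Theorem~\ref{EAmain} and Theorem~\ref{EBmain}: introduce a candidate primitive $\mathcal{E}$ for $({\mathcal{Q}}_\lambda)_z$, verify that $\mathcal{E}_{\overline{\,z\,}}$ is real so that Poincar\'{e}'s Lemma produces a real-valued ${\mathcal{Q}}_\lambda$ on the simply connected domain $\Omega$, and then check the three defining conditions of Definition~\ref{FWD01} for the new triple $\left(g_{{}_{\lambda}}, {\mathcal{P}}_\lambda, {\mathcal{Q}}_\lambda\right)$.

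First I would set
\[
\mathcal{E} := \left( \frac{1}{\,g\,} + i\lambda \right) \bigl( g\,{\mathcal{Q}}_z - i\lambda\,{\mathcal{P}}_z \bigr) = {\mathcal{Q}}_z + i\lambda g\,{\mathcal{Q}}_z - \frac{i\lambda}{\,g\,}\,{\mathcal{P}}_z + \lambda^2\,{\mathcal{P}}_z.
\]
Differentiating in $\overline{\,z\,}$, using $g_{\overline{\,z\,}} = 0$, the integrability relation ${\mathcal{P}}_{z\overline{\,z\,}} = {\vert g \vert}^2\,{\mathcal{Q}}_{z\overline{\,z\,}}$, and the identities $\frac{\,{\vert g \vert}^2\,}{g} = \overline{\,g\,}$ and $i\lambda(g - \overline{\,g\,}) = -2\lambda\,\textrm{Im}\,g$, a short computation should produce
\[
\mathcal{E}_{\overline{\,z\,}} = \bigl( 1 + i\lambda g - i\lambda \overline{\,g\,} + \lambda^2 {\vert g \vert}^2 \bigr)\,{\mathcal{Q}}_{z\overline{\,z\,}} = {\vert\, 1 + i\lambda g\,\vert}^{2}\,{\mathcal{Q}}_{z\overline{\,z\,}} \in \mathbb{R}.
\]
Poincar\'{e}'s Lemma then supplies the desired ${\mathcal{C}}^{2}$ real-valued ${\mathcal{Q}}_\lambda$ with $({\mathcal{Q}}_\lambda)_z = \mathcal{E}$.

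Two of the three conditions for $\left(g_{{}_{\lambda}}, {\mathcal{P}}_\lambda, {\mathcal{Q}}_\lambda\right)$ to be a first-kind Weierstrass data come essentially for free: $g_{{}_{\lambda}} = g/(1 + i\lambda g)$ is holomorphic because $g_{\overline{\,z\,}} = 0$ and, by hypothesis, $1 + i\lambda g \neq 0$ everywhere; while the Laplace-type equality follows from the cancellation ${\vert g_{{}_{\lambda}} \vert}^{2} \cdot {\vert 1+i\lambda g \vert}^{2} = {\vert g \vert}^2$ applied to the formula for $({\mathcal{Q}}_\lambda)_{z\overline{\,z\,}}$ above, which reproduces ${\mathcal{P}}_{z\overline{\,z\,}} = ({\mathcal{P}}_\lambda)_{z\overline{\,z\,}}$. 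For the non-degeneracy $({\mathcal{P}}_\lambda)_z - {\vert g_{{}_{\lambda}} \vert}^{2} ({\mathcal{Q}}_\lambda)_z \neq 0$, I would clear denominators by ${\vert 1+i\lambda g \vert}^{2}$ and apply ${\vert g \vert}^2\bigl(\frac{1}{g} + i\lambda\bigr) = \overline{\,g\,}(1 + i\lambda g)$ to collapse mixed terms, giving the factorization
\[
{\vert\, 1 + i\lambda g\,\vert}^{2}\,({\mathcal{P}}_\lambda)_z - {\vert g \vert}^2\,\mathcal{E} = (1+i\lambda g)\,\bigl(\,{\mathcal{P}}_z - {\vert g \vert}^2\,{\mathcal{Q}}_z\,\bigr),
\]
which is nonzero as a product of two nonzero quantities.

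The only part that is not pure bookkeeping is this final factorization: before expansion it is a bit opaque, but the two $i\lambda \overline{\,g\,}\,{\mathcal{P}}_z$ contributions cancel pairwise and leave precisely the factor $\bigl({\mathcal{P}}_z - {\vert g \vert}^2 {\mathcal{Q}}_z\bigr)$ guaranteed nonzero by hypothesis. Everything else is a direct unwinding of Definition~\ref{FWD01} in parallel with the proofs of Theorem~\ref{EAmain} and Theorem~\ref{EBmain}.
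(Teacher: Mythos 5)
Your proposal is correct and follows essentially the same route as the paper's first proof of Proposition \ref{FIS01}: the same primitive $\mathcal{E} = \left(\frac{1}{\,g\,} + i\lambda\right)\left(g\,\mathcal{Q}_z - i\lambda\,\mathcal{P}_z\right)$, the same Poincar\'{e}-Lemma step (your formula $\mathcal{E}_{\overline{\,z\,}} = {\vert\, 1+i\lambda g \,\vert}^2\,\mathcal{Q}_{z\overline{\,z\,}}$ agrees with the paper's $\frac{1}{\,{\vert\, g_{{}_{\lambda}} \,\vert}^2\,}\,\mathcal{P}_{z\overline{\,z\,}}$ via $\mathcal{P}_{z\overline{\,z\,}} = {\vert\, g \,\vert}^2\,\mathcal{Q}_{z\overline{\,z\,}}$), and the same verification of the three conditions of Definition \ref{FWD01}. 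Your cleared-denominator factorization $(1+i\lambda g)\left(\mathcal{P}_z - {\vert\, g \,\vert}^2\,\mathcal{Q}_z\right)$ is exactly the paper's concluding identity $\left(\mathcal{P}_{\lambda}\right)_z - {\vert\, g_{{}_{\lambda}} \,\vert}^2 \left(\mathcal{Q}_{\lambda}\right)_z = \frac{\,\overline{\, g_{{}_{\lambda}} \,}\,}{\,\overline{\, g \,}\,}\left(\mathcal{P}_z - {\vert\, g \,\vert}^2\,\mathcal{Q}_z\right)$ multiplied through by ${\vert\, 1+i\lambda g \,\vert}^2$, so the two arguments coincide up to bookkeeping.
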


\begin{proof}[\textbf{First Proof of Proposition \ref{FIS01}}]  We observe that both $g_{ {}_{\lambda}}$ and $g=g_{ {}_{0} }$ vanishes nowhere. We have 
 \begin{equation*}  \label{FI01p02}
      \frac{1}{\,g_{ {}_{\lambda} }\,} = \frac{1}{\,g\,} + i \lambda \quad \text{and} \quad 
         \frac{1}{\, \overline{\,g_{ {}_{\lambda} }\,} \,} = \frac{1}{\,\overline{g}\,}  -i \lambda.
 \end{equation*}
We introduce the function $  \mathcal{E}: \Omega \to \mathbb{R}$ by 
 \begin{equation*}  \label{FI01p03}
  \mathcal{E}:=\left( \,  \frac{1}{\,g\,} + i \lambda \, \right) \left(  g    {\mathcal{Q}}_{z} - i \lambda {\mathcal{P}}_{z}      \right) = 
 \frac{1}{\,g_{ {}_{\lambda} }\,}   \left(  g    {\mathcal{Q}}_{z} - i \lambda {\mathcal{P}}_{z}      \right).
 \end{equation*}
Note that both $g_{ {}_{\lambda}}$ and $g=g_{ {}_{0} }$ are holomorphic and that ${\mathcal{Q}}_{z \overline{\,z\,}}= \frac{1}{\,  {\vert \, g \, \vert}^{2}  \, } {\mathcal{P}}_{z \overline{\,z\,}}$. We have
 \begin{eqnarray*}  \label{FI01p04}
      {\mathcal{E}}_{ \overline{\,z\,} } &=&   \frac{g}{\,g_{ {}_{\lambda} }\,}     {\mathcal{Q}}_{z \overline{\,z\,}} +  \frac{1}{\,g_{ {}_{\lambda} }\,}  \left( \,  - i \lambda   \, \right) {\mathcal{P}}_{z  \overline{\,z\,} }  
    =   \frac{g}{\,g_{ {}_{\lambda} }\,}  \frac{\, {\mathcal{P}}_{z \overline{\,z\,}} \,}{\,  g \overline{\,g\,}  \, } +  \frac{1}{\,g_{ {}_{\lambda} }\,}  \left( \,  - i \lambda   \, \right) {\mathcal{P}}_{z  \overline{\,z\,} }  \\
        &=& \frac{1}{\,g_{ {}_{\lambda} }\,}   \left(\,   \frac{\, 1 \,}{\,  \overline{\,g\,}  \, }  - i \lambda   
 \, \right)  {\mathcal{P}}_{z  \overline{\,z\,} }  
 =  \frac{1}{\, {\left\vert \,   g_{ {}_{\lambda} } \,  \right\vert}^{2} \,}    {\mathcal{P}}_{z  \overline{\,z\,} }.
 \end{eqnarray*}
Since the domain $\Omega$ is simply connected, by Poincar\'{e}'s Lemma, the observation 
 \begin{equation*}  \label{FI01p05}
  {\mathcal{E}}_{ \overline{\,z\,} } =\frac{1}{\, {\left\vert \,   g_{ {}_{\lambda} } \,  \right\vert}^{2} \,}    {\mathcal{P}}_{z  \overline{\,z\,} } \in \mathbb{R}
   \end{equation*} 
    implies the existence of an 
 ${\mathbb{R}}$-valued function ${\mathcal{Q}}_{\lambda}$ defined on $\Omega$ such that
 \begin{equation*}  \label{FI01p06}
        {\left( \, {\mathcal{Q}}_{\lambda} \, \right)}_{z}  =  \mathcal{E}.
 \end{equation*}           
It follows that  
 \begin{equation*}  \label{FI01p07}
          {\left( \, {\mathcal{Q}}_{\lambda} \, \right)}_{z  \overline{\,z\,} }   =  {\mathcal{E}}_{\overline{\,z\,} }
          =  \frac{1}{\, {\left\vert \,   g_{ {}_{\lambda} } \,  \right\vert}^{2} \,}    {\mathcal{P}}_{z  \overline{\,z\,} }
          =   \frac{1}{\, {\left\vert \,   g_{ {}_{\lambda} } \,  \right\vert}^{2} \,}    { \left( {\mathcal{P}}_{\lambda} \right) }_{z  \overline{\,z\,} }.
 \end{equation*}    
 Finally, we have the equality
 \begin{eqnarray*}  \label{FI08}
    \frac{1}{\, \overline{\, g_{ {}_{\lambda} } \,}  \,}   {\left( \, {\mathcal{P}}_{\lambda} \, \right)}_{z}  -    g_{ {}_{\lambda} }   \,  {\left( \, {\mathcal{Q}}_{\lambda} \, \right)}_{z}
   &=&       \frac{1}{\, \overline{\, g_{ {}_{\lambda} } \,}  \,}   {\left( \, {\mathcal{P}}  \, \right)}_{z}  -    g_{ {}_{\lambda} }   \,  \mathcal{E}  
  =    \frac{1}{\, \overline{\, g_{ {}_{\lambda} } \,}  \,}   {\left( \, {\mathcal{P}}  \, \right)}_{z}  -    g_{ {}_{\lambda} }   \,  
    \left( \,    \frac{1}{\,   g_{ {}_{\lambda} }   \,}  \left(  g    {\mathcal{Q}}_{z} - i \lambda {\mathcal{P}}_{z}      \right)   \, \right) \\
    &=&  \left( \,    \frac{1}{\, \overline{\, g_{ {}_{\lambda} } \,}  \,}  + i \lambda    \, \right)  {\mathcal{P}}_{z}  -    g   \,  {\mathcal{Q}}_{z} 
    =  \frac{1}{\, \overline{\, g\,}  \,}   {\mathcal{P}}_{z}  -    g   \,  {\mathcal{Q}}_{z}, 
 \end{eqnarray*}
This implies the last condition 
 \begin{equation*}  \label{FI01p09}
      {\left( \,  {\mathcal{P}}_{\lambda}  \, \right) }_{z}  -  {\vert \, g_{ {}_{\lambda} }  \, \vert}^{2} \,  {\left(\, {\mathcal{Q}}_{\lambda} \, \right)}_{z} 
  =   \frac{\,   \overline{\, g_{ {}_{\lambda} } \,}     \,}{\,  \overline{\, g\,}   \,}  \left( {\mathcal{P}}_{z}  -  {\vert \, g \, \vert}^{2} \,  {\mathcal{Q}}_{z} \right) \neq 0.
 \end{equation*}  
\end{proof}

Though the first proof of Proposition \ref{FIS02} is self-contained, it does not explain the method how to discover the integrability condition in Proposition \ref{FIS02}:
 \begin{equation*}  \label{FI01p01again}
      {\left( \, {\mathcal{Q}}_{\lambda} \, \right)}_{z} = \left( \,  \frac{1}{\,g\,} + i \lambda \, \right) \left(  g    {\mathcal{Q}}_{z} - i \lambda {\mathcal{P}}_{z}      \right).  
 \end{equation*}
We use the equivalence of the Weierstrass data of the first kind and the Weierstrass data of the second kind to discover the integrability condition. 

\begin{proof}[\textbf{Second Proof of Proposition \ref{FIS01}}] 
We set $h=\frac{1}{\,g\,}$ and $\mathcal{N}=2 \mathcal{P}$. According to Theorem \ref{EAmain}, we can
 find a ${\mathcal{C}}^{2}$ function ${\mathcal{M}}: \Omega \to \mathbb{R}$ such that 
 \begin{equation*}  \label{FI01q01}
             {\mathcal{M}}_{z} = \frac{1}{\, g \,} \,  {\mathcal{P}}_{z} + g \, {\mathcal{Q}}_{z}. 
 \end{equation*}
 Moreover, the triple $\left(h, \, \mathcal{M}, \, \mathcal{N}\right)$ is a Weierstrass data of the second kind: 
  \begin{equation*} \label{FI01q02}
    h_{\overline{\,z\,} } =0, \quad
   {\mathcal{M}}_{ z \overline{\,z\,} }=  \left( \textrm{Re} \,  h \right)   \,  {\mathcal{N}}_{ z \overline{\,z\,} }, \quad 
  {\mathcal{M}}_{z} -  \left( \textrm{Re} \,  h \right) \,  { \mathcal{N}}_{z} \neq 0.  
 \end{equation*}
It follows from $\lambda \in \mathbb{R}$ that $ \textrm{Re} \left( h +i \lambda \right)= \textrm{Re} \,  h$.
We find immediately that the triple 
\begin{equation*} \label{FI01q03}
  \left(h_{{}_{\lambda}}, \, {\mathcal{M}}_{\lambda}, \, {\mathcal{N}}_{\lambda }\right)=\left(h + i \lambda, \, \mathcal{M}, \, \mathcal{N}\right)
 \end{equation*}
is also a Weierstrass data of the second kind:
  \begin{equation*} \label{FI01q04}
    {\left(\, h_{{}_{\lambda}} \, \right)}_{\overline{\,z\,} } =0, \quad
   {\left(\, {\mathcal{M}}_{\lambda} \, \right)}_{ z \overline{\,z\,} }=  \left( \textrm{Re} \,  h_{{}_{\lambda}} \right)   \,     {\left(\, {\mathcal{N}}_{\lambda} \, \right)}_{ z \overline{\,z\,} }, \quad 
   {\left(\, {\mathcal{M}}_{\lambda} \, \right)}_{z} -  \left( \textrm{Re} \,  h_{{}_{\lambda}} \right) \,     {\left(\, {\mathcal{N}}_{\lambda} \, \right)}_{z} \neq 0.  
 \end{equation*}
   Theorem \ref{EBmain} guarantees the existence of a ${\mathcal{C}}^{2}$ function ${\mathcal{Q}}_{\lambda}: \Omega \to \mathbb{R}$ such that 
 \begin{equation*}  \label{FI01q05}
  {\left(\, {\mathcal{Q}}_{\lambda} \,\right)}_{z} = h_{{}_{\lambda}}  \,  {\left(\, {\mathcal{M}}_{\lambda} \,\right)}_{z} - \frac{\, {h_{{}_{\lambda}} }^2 \,}{2} {\left(\, {\mathcal{N}}_{\lambda} \,\right)}_{z}.  
 \end{equation*}
 It follows from the definitions $g_{ {}_{\lambda} } = \frac{g}{\, 1 + i \lambda g\,}$, $h=\frac{1}{\,g\,}$, and $h_{{}_{\lambda}}= h+ i \lambda$ that
    \begin{equation*}  \label{FI01q06}
 \frac{1}{\, g_{{}_{\lambda}}}  =  \frac{1}{\, g \,} + i \lambda = h + i \lambda = h_{{}_{\lambda}}. 
 \end{equation*}
We set
  \begin{equation*}  \label{FI01q07}
    {\mathcal{P}}_{\lambda} := {\mathcal{P}} = \frac{1}{\,2\,}  {\mathcal{N}} =\frac{1}{\,2\,} {\mathcal{N}}_{\lambda}.
 \end{equation*}
   Theorem \ref{EBmain} shows that the triple $\left( g_{{}_{\lambda}}, \, {\mathcal{P}}_{\lambda}, \, {\mathcal{Q}}_{\lambda} \right)$ is a Weierstrass data of the second kind.
It now remains to deduce the integrability condition: 
 \begin{eqnarray*}  \label{FI01q08}
      {\left( \, {\mathcal{Q}}_{\lambda} \, \right)}_{z}
    &=&   h_{{}_{\lambda}}  \,  {\left(\, {\mathcal{M}}_{\lambda} \,\right)}_{z} - \frac{\, {h_{{}_{\lambda}} }^2 \,}{2} {\left(\, {\mathcal{N}}_{\lambda} \,\right)}_{z}    
  =    \frac{1}{\, g_{{}_{\lambda}}}  \,  {\mathcal{M}}_{z} -   \frac{1}{\, { g_{{}_{\lambda}}}^{2} }  {\mathcal{P}}_{z}  \\   
 &=&    \frac{1}{\, g_{{}_{\lambda}}} \left( \, 
 \frac{1}{\, g \,} \,  {\mathcal{P}}_{z} + g \, {\mathcal{Q}}_{z} -   \frac{1}{\, g_{{}_{\lambda}}}   {\mathcal{P}}_{z}
 \, \right)   
=     \frac{1}{\, g_{{}_{\lambda}}} \left( \,  g \, {\mathcal{Q}}_{z}  +  \left(\, \frac{1}{\, g \,} -   \frac{1}{\, g_{{}_{\lambda}}} \, \right) {\mathcal{P}}_{z}
 \, \right)   \\  
  &=&  \left( \,  \frac{1}{\,g\,} + i \lambda \, \right) \left(  g    {\mathcal{Q}}_{z} - i \lambda {\mathcal{P}}_{z}      \right).
 \end{eqnarray*} 
\end{proof}

\begin{prop}[\textbf{Elliptic Deformations}] \label{FIS02}
 Let the triple $\left(h, \, \mathcal{M}, \, \mathcal{N}\right)$ be a Weierstrass data of the second kind. Given a parameter $\tau \in \mathbb{R}$,
 we set $h_{ {}_{\tau} } := e^{-i \tau} h$ and ${\mathcal{N}}_{\tau} :=\mathcal{N}$. 
 Then, there exists a ${\mathcal{C}}^{2}$ function ${\mathcal{M}}_{\tau}: \Omega \to \mathbb{R}$ satisfying the integrability condition 
 \begin{equation*}  \label{FI02}
      {\left( \, {\mathcal{M}}_{\tau} \, \right)}_{z} = e^{i \tau}  \, {\mathcal{M}}_{ z } - \frac{\,  e^{i \tau} -  e^{- i \tau}   \,}{2}  \, h \,  {\mathcal{N}}_{ z }. 
 \end{equation*}
 Moreover, the triple $\left(h_{ {}_{\tau} }, \,{\mathcal{M}}_{\tau}, \, {\mathcal{N}}_{\tau}\right)$ becomes a Weierstrass data of the second kind. 
\end{prop}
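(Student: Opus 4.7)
The plan is to mirror the first proof of Proposition \ref{FIS01}: construct the candidate for $(\mathcal{M}_\tau)_z$ as an explicit $\mathbb{C}$-valued expression $\mathcal{E}$, show that $\mathcal{E}_{\bar z} \in \mathbb{R}$ by means of the integrability condition for $(h,\mathcal{M},\mathcal{N})$, invoke Poincar\'e's Lemma on the simply connected domain $\Omega$ to produce the real function $\mathcal{M}_\tau$, and finally verify the three conditions of Definition \ref{FWD02}.

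Concretely, I would define
\begin{equation*}
  \mathcal{E} := e^{i\tau}\, \mathcal{M}_z \;-\; \frac{\,e^{i\tau}-e^{-i\tau}\,}{2}\, h\, \mathcal{N}_z.
\end{equation*}
Since $h_{\bar z}=0$, differentiating gives
\begin{equation*}
 \mathcal{E}_{\bar z} \;=\; e^{i\tau}\, \mathcal{M}_{z\bar z} \;-\; \frac{\,e^{i\tau}-e^{-i\tau}\,}{2}\, h\, \mathcal{N}_{z\bar z}.
\end{equation*}
Substituting the second kind integrability condition $\mathcal{M}_{z\bar z}=\frac{\,h+\overline{h}\,}{2}\,\mathcal{N}_{z\bar z}$ and simplifying, the bracketed coefficient collapses to $\frac{\, e^{-i\tau} h + e^{i\tau}\overline{h}\,}{2} = \textrm{Re}\,(e^{-i\tau} h) = \textrm{Re}\, h_{{}_{\tau}}$, so
\begin{equation*}
 \mathcal{E}_{\bar z} \;=\; (\textrm{Re}\, h_{{}_{\tau}})\, \mathcal{N}_{z\bar z} \;\in\; \mathbb{R}.
\end{equation*}
Poincar\'e's Lemma then produces a real $\mathcal{C}^2$ function $\mathcal{M}_\tau$ on $\Omega$ with $(\mathcal{M}_\tau)_z = \mathcal{E}$, which gives the claimed integrability formula.

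It remains to check the three conditions of the second kind data. Holomorphicity of $h_{{}_{\tau}} = e^{-i\tau} h$ is immediate. The computation above already shows $(\mathcal{M}_\tau)_{z\bar z}=\mathcal{E}_{\bar z}=(\textrm{Re}\,h_{{}_{\tau}})\,(\mathcal{N}_\tau)_{z\bar z}$, so the Laplace-type compatibility holds. For the non-degeneracy condition, I would expand
\begin{equation*}
 (\mathcal{M}_\tau)_z - (\textrm{Re}\,h_{{}_{\tau}})\,(\mathcal{N}_\tau)_z \;=\; e^{i\tau}\mathcal{M}_z - \frac{\,e^{i\tau}-e^{-i\tau}\,}{2}\,h\,\mathcal{N}_z - \frac{\,e^{-i\tau}h + e^{i\tau}\overline{h}\,}{2}\,\mathcal{N}_z,
\end{equation*}
and collect the $\mathcal{N}_z$ coefficient: the $e^{-i\tau}h$ cancels, leaving $e^{i\tau}\,\frac{\,h+\overline{h}\,}{2}$, hence the whole expression equals $e^{i\tau}\bigl(\mathcal{M}_z - (\textrm{Re}\,h)\,\mathcal{N}_z\bigr)$, which is nonzero by hypothesis.

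The only delicate step is the algebraic bookkeeping with $e^{\pm i\tau}$ and $h,\overline{h}$; once the coefficient of $\mathcal{N}_{z\bar z}$ in $\mathcal{E}_{\bar z}$ is recognized as $\textrm{Re}\,h_{{}_{\tau}}$, the rest is formal. As in Proposition \ref{FIS01}, a conceptual alternative would be to transport the data through Theorem \ref{EAmain} and \ref{EBmain} into first kind data and back, but the direct verification is shorter here because the transformation $h \mapsto e^{-i\tau} h$ already preserves the second kind structure in a clean way.
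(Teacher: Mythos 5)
Your proposal is correct and coincides with the paper's own first proof of Proposition \ref{FIS02}: the same auxiliary function $\mathcal{E}$, the same collapse of the coefficient of ${\mathcal{N}}_{z\overline{\,z\,}}$ to $\textrm{Re}\, h_{{}_{\tau}}$, the same use of Poincar\'{e}'s Lemma, and the same factorization $e^{i\tau}\bigl(\mathcal{M}_z - (\textrm{Re}\,h)\,\mathcal{N}_z\bigr)$ for the non-degeneracy condition. The alternative route you mention (transporting through Theorems \ref{EAmain} and \ref{EBmain}) is precisely the paper's second proof, so nothing is missing.
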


\begin{proof}[\textbf{First Proof of Proposition \ref{FIS02}}]  Since $h=h_{ {}_{0} }$ vanishes nowhere, $h_{ {}_{\tau}}=e^{-i \tau} h$ also vanishes nowhere. 
We introduce the function $  \mathcal{E}: \Omega \to \mathbb{R}$ by 
 \begin{equation*}  \label{FI02p03}
  \mathcal{E}:=e^{i \tau}  \, {\mathcal{M}}_{ z } - \frac{\,  e^{i \tau} -  e^{- i \tau}   \,}{2}  \, h \,  {\mathcal{N}}_{ z }.
 \end{equation*}
Note that $h$ is holomorphic and that $   {\mathcal{M}}_{ z \overline{\,z\,} }= \frac{\, h+ \overline{\, h \,} \,}{2} \,  {\mathcal{N}}_{ z \overline{\,z\,} }$. We have
         \begin{eqnarray*}  \label{FI02p04}
      {\mathcal{E}}_{ \overline{\,z\,} } &=&  e^{i \tau}  \, {\mathcal{M}}_{ z \overline{\,z\,} } - \frac{\,  e^{i \tau} -  e^{- i \tau}   \,}{2}  \, h \,  {\mathcal{N}}_{ z \overline{\,z\,}} 
          =  \left( \, e^{i \tau}  \,  \frac{\, h+ \overline{\, h \,} \,}{2}  - \frac{\,  e^{i \tau} -  e^{- i \tau}   \,}{2}  \, h \,  \right)  {\mathcal{N}}_{ z \overline{\,z\,}}   \\
       &=&   \left( \,   \frac{ \,  e^{-i \tau} h +  e^{i \tau} \overline{\, h \,}     \, }{2} \, \right) {\mathcal{N}}_{ z \overline{\,z\,}} =  \left( \textrm{Re} \,   h_{ {}_{\tau}}  \right) {\mathcal{N}}_{ z \overline{\,z\,}}.
 \end{eqnarray*}
Since the domain $\Omega$ is simply connected, by Poincar\'{e}'s Lemma, the observation 
 \begin{equation*}  \label{FI02p05}
  {\mathcal{E}}_{ \overline{\,z\,} } =  \left( \textrm{Re} \,   h_{ {}_{\tau}}  \right) {\mathcal{N}}_{ z \overline{\,z\,}} \in \mathbb{R}
   \end{equation*} 
    implies the existence of an 
 ${\mathbb{R}}$-valued function ${\mathcal{M}}_{\lambda}$ defined on $\Omega$ such that
 \begin{equation*}  \label{FI02p06}
        {\left( \, {\mathcal{M}}_{\lambda} \, \right)}_{z}  =  \mathcal{E}.
 \end{equation*}           
It follows that  
 \begin{equation*}  \label{FI02p07}
          {\left( \, {\mathcal{M}}_{\lambda} \, \right)}_{z  \overline{\,z\,} }   =  {\mathcal{E}}_{\overline{\,z\,} }
          = \left( \textrm{Re} \,   h_{ {}_{\tau}}  \right) {\mathcal{N}}_{ z \overline{\,z\,}}
          =  \left( \textrm{Re} \,   h_{ {}_{\tau}}  \right)   {\left( \, {\mathcal{N}}_{\lambda} \, \right)}_{ z \overline{\,z\,}}.
 \end{equation*}    
 Finally, we deduce the last condition
 \begin{eqnarray*}  \label{FI02p08}
    {\left( \, {\mathcal{M}}_{\lambda} \, \right)}_{z}  -  \left( \textrm{Re} \,   h_{ {}_{\tau}}  \right)   {\left( \, {\mathcal{N}}_{\lambda} \, \right)}_{ z }
    &=&      {\mathcal{E}} - \left( \,   \frac{ \,   e^{-i \tau} h +  e^{i \tau} \overline{\, h \,}    \, }{2} \, \right)  {\mathcal{N}}_{ z }  \\
    &=&  e^{i \tau}  \, {\mathcal{M}}_{ z } - \frac{\,  e^{i \tau} -  e^{- i \tau}   \,}{2}  \, h \,  {\mathcal{N}}_{ z } -  \left( \,   \frac{ \,   e^{-i \tau} h +  e^{i \tau} \overline{\, h \,}     \, }{2} \, \right)  {\mathcal{N}}_{ z }  \\
    &=&  e^{i \tau} \left( \,   { \mathcal{M}}_{z} - \left( \textrm{Re} \,  h \right)  \,  { \mathcal{N}}_{z}  \, \right) \neq 0.
 \end{eqnarray*}
\end{proof}

Though the first proof of Proposition \ref{FIS02} is self-contained, it does not explain how to discover the integrability condition in Proposition \ref{FIS02}:
 \begin{equation*}  \label{FI02p01again}
      {\left( \, {\mathcal{M}}_{\tau} \, \right)}_{z} = e^{i \tau}  \, {\mathcal{M}}_{ z } - \frac{\,  e^{i \tau} -  e^{- i \tau}   \,}{2}  \, h \,  {\mathcal{N}}_{ z }. 
 \end{equation*}
We use the equivalence of the Weierstrass data of the first kind and the Weierstrass data of the second kind to discover the integrability condition. 

\begin{proof}[\textbf{Second Proof of Proposition \ref{FIS02}}] 
We set $g=\frac{1}{\,h\,}$ and $\mathcal{P}=\frac{1}{\,2\,} \mathcal{N}$. According to Theorem \ref{EBmain}, we can
 find a ${\mathcal{C}}^{2}$ function ${\mathcal{Q}}: \Omega \to \mathbb{R}$ such that 
 \begin{equation*}  \label{FI02q01}
        {\mathcal{Q}}_{z} = h \,  {\mathcal{M}}_{z} - \frac{\, h^2 \,}{2} {\mathcal{N}}_{z}.  
 \end{equation*}
 Moreover, the triple $\left(g, \, \mathcal{P}, \, \mathcal{Q}\right)$ is a Weierstrass data of the second kind: 
  \begin{equation*} \label{FI02q02}
    g_{\overline{\,z\,} } =0, \quad
    {\mathcal{P}}_{z \overline{\,z\,}} =  {\vert \, g \, \vert}^{2}  \;  {\mathcal{Q}}_{z \overline{\,z\,}}, \quad 
    {\mathcal{P}}_{z}  -  {\vert \, g \, \vert}^{2} \,  {\mathcal{Q}}_{z} \neq 0.  
 \end{equation*}
We observe that $\vert \, e^{i \tau} g \, \vert = \vert \, g \, \vert$. We find immediately that the triple 
\begin{equation*} \label{FI02q03}
  \left({g}_{{}_{\tau}}, \, {\mathcal{P}}_{\tau}, \, {\mathcal{Q}}_{\tau}\right)=\left(e^{i \tau} g, \, \mathcal{P}, \, \mathcal{Q}\right)
 \end{equation*}
is also a Weierstrass data of the first kind:
  \begin{equation*} \label{FI02q04}
    {\left(\, {g}_{{}_{\tau}} \,\right)}_{\overline{\,z\,} } =0, \quad
    { \left( {\mathcal{P}}_{\tau} \right) }_{z \overline{\,z\,}} =  {\vert \, g \, \vert}^{2}  \;  { \left( {\mathcal{Q}}_{\tau}  \right)}_{z \overline{\,z\,}}, \quad 
   {  \left( {\mathcal{P}}_{\tau} \right) }_{z}  -  {\vert \, {g}_{{}_{\tau}} \, \vert}^{2} \,  {\left( {\mathcal{Q}}_{\tau} \right)}_{z} \neq 0.  
 \end{equation*}
   Theorem \ref{EAmain} guarantees the existence of a ${\mathcal{C}}^{2}$ function ${\mathcal{M}}_{\tau}: \Omega \to \mathbb{R}$ such that 
 \begin{equation*}  \label{FI02q05}
  {\left( {\mathcal{M}}_{\tau} \right)}_{z} = \frac{1}{\, {g}_{{}_{\tau}} \,} \,  {\left( {\mathcal{P}}_{\tau} \right)}_{z} + {g}_{{}_{\tau}} \, {\left( {\mathcal{Q}}_{\tau} \right)}_{z}
 \end{equation*}
 It follows from the definitions ${h}_{{}_{\tau}}  = e^{-i \tau} h $, $g=\frac{1}{\,h\,}$, and ${g}_{{}_{\tau}}=e^{i \tau} g$  that
    \begin{equation*}  \label{FI02q06}
 {h}_{{}_{\tau}}  = \frac{1}{\, {g}_{{}_{\tau}} \,}.  
 \end{equation*}
We set
  \begin{equation*}  \label{FI02q07}
    {\mathcal{N}}_{\tau}:= {\mathcal{N}} =2 {\mathcal{P}} =2 {\mathcal{P}}_{\lambda}.
 \end{equation*}
   Theorem \ref{EAmain} shows that the triple $\left( h_{{}_{\tau}}, \, {\mathcal{M}}_{\tau}, \, {\mathcal{N}}_{\tau} \right)$ is a Weierstrass data of the second kind.
It now remains to deduce the integrability condition:
 \begin{eqnarray*}  \label{FI02q08}
    {\left( \, {\mathcal{M}}_{\tau} \, \right)}_{z} 
    &=&  \frac{1}{\, {g}_{{}_{\tau}} \,} \,  {\left( {\mathcal{P}}_{\tau} \right)}_{z} + {g}_{{}_{\tau}} \, {\left( {\mathcal{Q}}_{\tau} \right)}_{z} 
    = e^{-i \tau} h \frac{\,  {\mathcal{N}}_{z} \, }{2} + \frac{e^{i \tau}\,}{h}    {\mathcal{Q}}_{z}   \\
         &=&   \frac{\, e^{-i \tau}  h \, }{2} {\mathcal{N}}_{z} + \frac{e^{i \tau}\,}{h}   \left( \,    h \,  {\mathcal{M}}_{z} - \frac{\, h^2 \,}{2} {\mathcal{N}}_{z}   \, \right) 
 = e^{i \tau}  \, {\mathcal{M}}_{ z } - \frac{\,  e^{i \tau} -  e^{- i \tau}   \,}{2}  \, h \,  {\mathcal{N}}_{ z }.
 \end{eqnarray*} 
\end{proof}

 \begin{prop}[\textbf{Hyperbolic Deformations in terms of the Weierstrass data of the first kind}] \label{FIS03A}
 Let the triple $\left(g, \, \mathcal{P}, \, \mathcal{Q}\right)$ be a Weierstrass data of the first kind. Let $\eta \in \mathbb{R}$ be a parameter. 
 Then, the triple $\left({g}_{ {}_{\eta} }, \, {\mathcal{P}}_{\eta}, \, {\mathcal{Q}}_{\eta}\right)
 :=\left( e^{\eta} g, \, e^{\eta} {\mathcal{P}}, \, e^{-\eta} \mathcal{Q}  \right)$ becomes a Weierstrass data of the first kind.
 \end{prop}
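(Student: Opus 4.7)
The plan is to prove Proposition \ref{FIS03A} by direct verification of the three defining conditions for the Weierstrass data of the first kind applied to the deformed triple $\left(g_{{}_{\eta}}, \mathcal{P}_{\eta}, \mathcal{Q}_{\eta}\right) = \left(e^{\eta} g,\, e^{\eta}\mathcal{P},\, e^{-\eta}\mathcal{Q}\right)$. Since the scaling factors $e^{\eta}$ and $e^{-\eta}$ are real constants, no integrability argument or appeal to Poincar\'{e}'s Lemma is needed; the three conditions will propagate by linearity of partial derivatives together with a clean cancellation of the exponential factors.

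First I would check that $g_{{}_{\eta}}$ remains a nowhere vanishing holomorphic function: because $e^{\eta}$ is a nonzero real constant, $(g_{{}_{\eta}})_{\overline{\,z\,}} = e^{\eta} g_{\overline{\,z\,}} = 0$ and $g_{{}_{\eta}} \neq 0$ follow directly from the assumptions on $g$. Next I would verify the second condition: from $(\mathcal{P}_{\eta})_{z \overline{\,z\,}} = e^{\eta} \mathcal{P}_{z \overline{\,z\,}}$ and $|g_{{}_{\eta}}|^{2} (\mathcal{Q}_{\eta})_{z \overline{\,z\,}} = e^{2\eta}|g|^{2}\cdot e^{-\eta}\mathcal{Q}_{z \overline{\,z\,}} = e^{\eta}|g|^{2} \mathcal{Q}_{z \overline{\,z\,}}$, the identity $(\mathcal{P}_{\eta})_{z \overline{\,z\,}} = |g_{{}_{\eta}}|^{2} (\mathcal{Q}_{\eta})_{z \overline{\,z\,}}$ is immediate from the corresponding relation for $(g, \mathcal{P}, \mathcal{Q})$.

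Finally, I would confirm the non-degeneracy condition by the same accounting:
\begin{equation*}
   (\mathcal{P}_{\eta})_{z} - |g_{{}_{\eta}}|^{2} (\mathcal{Q}_{\eta})_{z} \;=\; e^{\eta}\mathcal{P}_{z} - e^{2\eta}|g|^{2}\cdot e^{-\eta}\mathcal{Q}_{z} \;=\; e^{\eta}\bigl(\mathcal{P}_{z} - |g|^{2}\mathcal{Q}_{z}\bigr),
\end{equation*}
which is nonzero because the original data satisfies $\mathcal{P}_{z} - |g|^{2}\mathcal{Q}_{z} \neq 0$. The decisive feature that makes the whole computation work is the perfect balance $e^{\eta}\cdot e^{2\eta}\cdot e^{-\eta} = e^{2\eta}$ on the $|g|^{2}\mathcal{Q}$-side matching the single factor $e^{\eta}$ on the $\mathcal{P}$-side, so that an overall $e^{\eta}$ factors out of both the harmonicity-type relation and the non-degeneracy inequality. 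There is no genuine obstacle; this proposition is the short, purely algebraic counterpart to the more substantive parabolic and elliptic deformations treated above.
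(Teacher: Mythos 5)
Your proposal is correct and follows essentially the same route as the paper's proof: a direct verification of the three defining conditions, using the key cancellation $\left\vert \, e^{\eta} g \, \right\vert^{2} e^{-\eta} = e^{\eta} \, {\vert \, g \, \vert}^{2}$ so that a common factor $e^{\eta}$ extracts from both the equation ${\mathcal{P}}_{z \overline{\,z\,}} = {\vert \, g \, \vert}^{2} {\mathcal{Q}}_{z \overline{\,z\,}}$ and the non-degeneracy condition. Your explicit remark that $g_{{}_{\eta}}$ is nowhere vanishing, and that no Poincar\'{e}-Lemma integration step is needed (in contrast to the parabolic and elliptic deformations), is a harmless and accurate addition.
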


\begin{proof}  The assumption says that the triple $\left(g, \, \mathcal{P}, \, \mathcal{Q}\right)$ satisfy
     \begin{equation*}   \label{FI0Aq01}
     g_{\overline{\,z\,} } =0, \quad
    {\mathcal{P}}_{z \overline{\,z\,}} =  {\vert \, g \, \vert}^{2}  \;  {\mathcal{Q}}_{z \overline{\,z\,}}, \quad 
         {\mathcal{P}}_{z} -  {\vert \, g \, \vert}^{2}   {\mathcal{Q}}_{z} \neq 0.
 \end{equation*}
Hence, the triple  $\left({g}_{ {}_{\eta} }, \, {\mathcal{P}}_{\eta}, \, {\mathcal{Q}}_{\eta}\right)$ satisfy
     \begin{eqnarray*}   \label{FI03Aq02}
 &&   {\left(\, {g}_{ {}_{\eta} } \,\right)}_{\overline{\,z\,} } =  {\left(\,   e^{\eta} g   \,\right)}_{\overline{\,z\,} } =0, \\
 &&  {\left(\, {\mathcal{P}}_{\eta} \, \right)}_{ z \overline{\,z\,} }= {\left(\,  e^{\eta} {\mathcal{P}}    \, \right)}_{ z \overline{\,z\,} } 
= e^{\eta}   {\vert \, g \, \vert}^{2}  \;  {\mathcal{Q}}_{z \overline{\,z\,}} = {\vert \,   e^{\eta}  g \, \vert}^{2}  \,  e^{-\eta}   {\mathcal{Q}}_{z \overline{\,z\,}} 
 =  {\vert \, {g}_{ {}_{\eta} } \vert}^{2}  \;  {\left( \, {\mathcal{Q}}_{\eta} \, \right)}_{z \overline{\,z\,}}  \\
 &&    {\left(\, {\mathcal{P}}_{\eta} \, \right)}_{z} -  {\vert \,  {g}_{ {}_{\eta} } \vert}^{2}   {\left(\, {\mathcal{Q}}_{\eta} \, \right)}_{z} 
 = e^{\eta} \left( \, {\mathcal{P}}_{z} -  {\vert \, g \, \vert}^{2}   {\mathcal{Q}}_{z} \, \right)  \neq 0. 
 \end{eqnarray*}
\end{proof}

 \section{Reduction of Liu integrable system} \label{Liu system}
  
The purpose of this section is to explain one way to discover our integrable systems for marginally trapped surfaces in ${\mathbb{L}}^{4}$. An algebraic reduction of the following version of the Liu 
integrable system \cite{Liu 2013} for the triple of a holomorphic function and two $\mathbb{C}$-valued functions yields to our integrable systems for the Weierstrass
triple of a holomorphic function and two $\mathbb{R}$-valued functions. We emphasize that the hidden geometric idea of our algebraic reduction is implicitly contained in our new Weierstrass representations (Theorem \ref{Poisson 01}, Corollary  \ref{Poisson 02}, and Corollary  \ref{Poisson 03}) for marginally trapped surfaces.
  
    \begin{lem} [\textbf{Liu's integrable system for marginally trapped surfaces in ${\mathbb{L}}^{4}$ \cite{Liu 2013}}] \label{Liu}  
    Let $\Omega$ be an open domain in $\mathbb{C}$ with the complex coordinate $z$.  
We assume that a spacelike surface $\Sigma$ in Lorentz-Minkowski space ${\mathbb{L}}^{4}$ is parameterized by a ${\mathcal{C}}^{2}$ conformal patch $\mathbf{X}:  \Omega \to {\mathbb{L}}^{4}$ of the form 
 \begin{equation*} \label{Liu01} 
 {\mathbf{X}}_{z}   =  \begin{bmatrix}  \; {\left({\mathbf{x}}_{1} \right)}_{z} \; \\[4pt]   {\left({\mathbf{x}}_{2} \right)}_{z} \\[4pt]  {\left({\mathbf{x}}_{3} \right)}_{z} \\[4pt]  {\left({\mathbf{x}}_{4} \right)}_{z} \end{bmatrix}  
 =  \Psi \begin{bmatrix}   f_{1} + f_{2}  \\[4pt]  \; -i \left(  f_{1} - f_{2}  \right) \; \\[4pt]   1 - f_{1} f_{2}  \\[4pt]   1 + f_{1} f_{2}  \end{bmatrix},
  \end{equation*}
where  $\Psi :  \Omega \to \mathbb{C}-\{ 0 \}$ and $f_{1}, f_{2}  :  \Omega \to \mathbb{C}$. Then, the following three integrability conditions hold for all $z \in \Omega$:
 \begin{enumerate}
 \item[\textbf{(1)}] ${\Psi}_{\overline{\, z \,}} = \overline{\,  {\Psi}_{\overline{\, z \,}}   \,}$,
  \item[\textbf{(2)}] ${\left(\,\Psi f_{1} f_{2} \, \right)}_{\overline{\, z \,}} = \overline{\, {\left(\,\Psi f_{1} f_{2} \, \right)}_{\overline{\, z \,}}  \,}$,
 \item[\textbf{(3)}] ${\left(\, \Psi f_{1} \,\right)}_{\overline{\, z \,}} = \overline{\, {\left(\, \Psi f_{2} \,\right)}_{\overline{\, z \,}}    \,} \quad \Leftrightarrow \quad 
 \overline{\, {\left(\, \Psi f_{1} \,\right)}_{\overline{\, z \,}}    \,}= {\left(\, \Psi f_{2} \,\right)}_{\overline{\, z \,}}$.
 \end{enumerate} 
If $\Sigma$ is marginally trapped, then the following holomorphicity condition holds for each point $z \in \Omega$:
 \begin{equation*} \label{Liu02} 
     {\left(\, f_{1} \, \right)}_{\overline{\, z \,}}  = 0 \quad \text{or}  \quad  {\left(\, f_{2} \, \right)}_{\overline{\, z \,}}  = 0, 
 \end{equation*}
 \end{lem}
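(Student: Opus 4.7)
The plan is to treat the two assertions of the lemma separately: the three integrability identities $(1)$--$(3)$ are a consequence \emph{only} of the fact that each coordinate function $\mathbf{x}_{k}$ is $\mathbb{R}$-valued, while the holomorphicity alternative for $f_{1},f_{2}$ comes from a direct evaluation of $\langle \mathbf{X}_{z\overline{\,z\,}},\mathbf{X}_{z\overline{\,z\,}}\rangle$ in the ambient Lorentzian metric of signature $(+,+,+,-)$, which one reads off from the ansatz via the identity
\[
(f_{1}+f_{2})^{2}-(f_{1}-f_{2})^{2}+(1-f_{1}f_{2})^{2}-(1+f_{1}f_{2})^{2}=0,
\]
confirming the conformality $\langle \mathbf{X}_{z},\mathbf{X}_{z}\rangle=0$ that is implicit in the phrase ``conformal patch.''

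For the integrability conditions, the key observation is that $\mathbf{x}_{k}\in\mathbb{R}$ implies $(\mathbf{x}_{k})_{z\overline{\,z\,}}\in\mathbb{R}$, equivalently $\overline{\bigl((\mathbf{x}_{k})_{z}\bigr)_{\overline{\,z\,}}}=\bigl((\mathbf{x}_{k})_{z}\bigr)_{\overline{\,z\,}}$ for $k=1,2,3,4$. Differentiating the four components of $\mathbf{X}_{z}$ in $\overline{\,z\,}$ therefore produces four reality statements involving the four quantities $\Psi_{\overline{\,z\,}}$, $(\Psi f_{1}f_{2})_{\overline{\,z\,}}$, $(\Psi f_{1})_{\overline{\,z\,}}$, $(\Psi f_{2})_{\overline{\,z\,}}$. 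Adding and subtracting the third and fourth rows immediately gives the reality of $\Psi_{\overline{\,z\,}}$ and of $(\Psi f_{1}f_{2})_{\overline{\,z\,}}$, which are (1) and (2); combining the reality of $(\Psi f_{1})_{\overline{\,z\,}}+(\Psi f_{2})_{\overline{\,z\,}}$ with the pure imaginarity of $(\Psi f_{1})_{\overline{\,z\,}}-(\Psi f_{2})_{\overline{\,z\,}}$ then forces $(\Psi f_{1})_{\overline{\,z\,}}=\overline{(\Psi f_{2})_{\overline{\,z\,}}}$, which is (3). No differential equation needs to be solved here.

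For the second part, conformality makes $\mathbf{X}_{z\overline{\,z\,}}$ a normal field that is a nonzero scalar multiple of the mean curvature vector $\mathbf{H}$, so $\langle \mathbf{H},\mathbf{H}\rangle=0$ is equivalent to $\langle \mathbf{X}_{z\overline{\,z\,}},\mathbf{X}_{z\overline{\,z\,}}\rangle=0$. I will introduce shorthand $A=\Psi_{\overline{\,z\,}}$, $B=\Psi(f_{1})_{\overline{\,z\,}}$, $C=\Psi(f_{2})_{\overline{\,z\,}}$, so that $\Psi(f_{1}f_{2})_{\overline{\,z\,}}=Bf_{2}+Cf_{1}$, expand $\mathbf{X}_{z\overline{\,z\,}}$ component by component, and compute the Lorentzian norm squared. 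The pairs $(\mathbf{x}_{1},\mathbf{x}_{2})$ and $(\mathbf{x}_{3},\mathbf{x}_{4})$ each factor by $a^{2}-b^{2}=(a+b)(a-b)$, and I expect every term in $A^{2}$, $AB$, $AC$ to cancel, leaving the clean identity
\[
\langle \mathbf{X}_{z\overline{\,z\,}},\mathbf{X}_{z\overline{\,z\,}}\rangle \;=\; 4BC \;=\; 4\Psi^{2}\,(f_{1})_{\overline{\,z\,}}\,(f_{2})_{\overline{\,z\,}}.
\]
Since $\Psi$ vanishes nowhere, vanishing of this expression at $z$ is equivalent to $(f_{1})_{\overline{\,z\,}}(f_{2})_{\overline{\,z\,}}=0$, i.e., to the pointwise alternative stated in the lemma.

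The only non-routine step is verifying the collapse $\langle \mathbf{X}_{z\overline{\,z\,}},\mathbf{X}_{z\overline{\,z\,}}\rangle=4BC$: a priori the expansion contains terms quadratic in $A$ together with mixed $AB$ and $AC$ cross terms, and it is the specific placement of $f_{1}f_{2}$ in the last two entries of the ansatz, combined with the signature $(+,+,+,-)$, that makes all of them cancel. Apart from this algebraic cancellation, neither step requires any further analytic input; in particular, no appeal to Poincar\'{e}'s lemma is needed, in contrast to the equivalence theorems of Section~\ref{integrable systems}.
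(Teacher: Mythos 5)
Your proposal is correct and follows essentially the same route as the paper: conditions (1)--(3) from the reality of the coordinate functions $\mathbf{x}_{k}$ alone, and the alternative ${\left(f_{1}\right)}_{\overline{\,z\,}}{\left(f_{2}\right)}_{\overline{\,z\,}}=0$ from conformality plus the identity $\langle \mathbf{X}_{z\overline{\,z\,}},\mathbf{X}_{z\overline{\,z\,}}\rangle = 4\Psi^{2}{\left(f_{1}\right)}_{\overline{\,z\,}}{\left(f_{2}\right)}_{\overline{\,z\,}}$ and $\Psi\neq 0$. The only (cosmetic) difference is that the paper organizes the cancellation you verify by hand through the decomposition $\mathbf{X}_{z\overline{\,z\,}}=\Psi_{\overline{\,z\,}}\mathbf{F}_{0}+\Psi{\left(f_{1}\right)}_{\overline{\,z\,}}\mathbf{F}_{2}+\Psi{\left(f_{2}\right)}_{\overline{\,z\,}}\mathbf{F}_{1}$ and the inner-product table $\langle\mathbf{F}_{1},\mathbf{F}_{2}\rangle=2$ with all other products vanishing, which is exactly your observation that the $A^{2}$, $AB$, $AC$ terms cancel.
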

     

\begin{proof} It follows from \cite[Theorem 4.1 and Theorem 4.2]{Liu 2013}. For the reader's convenience, we sketch a self-contained proof. 
We first observe that 
 \begin{eqnarray*} \label{LiuP01} 
        \Psi &=& \frac{ \,{\left({\mathbf{x}}_{3} \right)}_{z}  + {\left({\mathbf{x}}_{4} \right)}_{z}\, }{2}, \\ 
    \Psi f_{1} f_{2} &=& \frac{ \, - {\left({\mathbf{x}}_{3} \right)}_{z}  +  {\left({\mathbf{x}}_{4} \right)}_{z} \, }{2}, \\   
       \Psi f_{1} &=& \frac{ \, {\left({\mathbf{x}}_{1} \right)}_{z}  + i {\left({\mathbf{x}}_{2} \right)}_{z} \, }{2}, \\  
    \Psi f_{2} &=& \frac{ \,  {\left({\mathbf{x}}_{1} \right)}_{z}  - i  {\left({\mathbf{x}}_{2} \right)}_{z} \, }{2}. \\       
 \end{eqnarray*}
 (These relations reveal how to prescribe the three functions $\Psi$, $f_{1}$, $f_{2}$ in terms of the four component functions of ${\mathbf{X}}_{z}$.) 
Since $ {\mathbf{x}}_{1}=\overline{\, {\mathbf{x}}_{1} \,}$,    $ {\mathbf{x}}_{2}=\overline{\, {\mathbf{x}}_{2} \,}$,    $ {\mathbf{x}}_{3}=\overline{\,
{\mathbf{x}}_{3} \,}$,    $ {\mathbf{x}}_{4}=\overline{\, {\mathbf{x}}_{4} \,}$, we have 
 \begin{equation*} \label{LiuP02} 
    \overline{\,  {\Psi}_{\overline{\, z \,}}   \,} 
   =  \overline{\,   {\left( \, \frac{ \,{\left({\mathbf{x}}_{3} \right)}_{z \overline{\, z \,}} +  {\left({\mathbf{x}}_{4} \right)}_{z \overline{\, z \,}}\, }{2}  \, \right) }   \,} 
    =  {\left( \, \frac{ \, {\left({\mathbf{x}}_{3} \right)}_{z} +{\left({\mathbf{x}}_{4} \right)}_{z}\, }{2}  \, \right) }_{\overline{\, z \,}} =    {\Psi}_{\overline{\, z \,}} 
 \end{equation*}
and 
 \begin{equation*} \label{LiuP03} 
    \overline{\,  {\left(\,\Psi f_{1} f_{2} \, \right)}_{\overline{\, z \,}}   \,} 
   =  \overline{\,   {\left( \, \frac{ \, - {\left({\mathbf{x}}_{3} \right)}_{z \overline{\, z \,}} +  {\left({\mathbf{x}}_{4} \right)}_{z \overline{\, z \,}}\, }{2}  \, \right) }   \,} 
    =  {\left( \, \frac{ \, - {\left({\mathbf{x}}_{3} \right)}_{z} +{\left({\mathbf{x}}_{4} \right)}_{z}\, }{2}  \, \right) }_{\overline{\, z \,}} =    {\left(\,\Psi f_{1} f_{2} \, \right)}_{\overline{\, z \,}}. 
 \end{equation*}
 We also have 
  \begin{eqnarray*}  \label{LiuP04} 
   \overline{\, {\left(\, \Psi f_{2} \,\right)}_{\overline{\, z \,}}    \,}  &=&    
  \overline{\,  {\left( \,    \frac{ \, {\left({\mathbf{x}}_{1} \right)}_{z}  - i {\left({\mathbf{x}}_{2} \right)}_{z} \, }{2}     \, \right)}_{ \overline{\, z \,} }   \,}
  = {\left( \,   \frac{ \, {\left({\mathbf{x}}_{1} \right)}_{\overline{\, z \,}}  + i {\left({\mathbf{x}}_{2} \right)}_{\overline{\, z \,}} \, }{2}  \, \right)}_{ z }  \\
  &=&   {\left( \,   \frac{ \, {\left({\mathbf{x}}_{1} \right)}_{z}  + i {\left({\mathbf{x}}_{2} \right)}_{z} \, }{2}  \, \right)}_{ \overline{\, z \,} } 
 =  {\left(\, \Psi f_{1} \,\right)}_{\overline{\, z \,}}.   
 \end{eqnarray*}
 Now, we assume that the spacelike surface $\Sigma$ with the mean curvature vector $\mathbf{H}$ is marginally trapped. Since $\mathbf{X}$ is a conformal patch, we find that
 the null condition
  \begin{equation*} \label{LiuP05} 
  \langle \mathbf{H}, \,  \mathbf{H} \rangle = 0
 \end{equation*}
is equivalent to 
   \begin{equation*} \label{LiuP06} 
  \langle  {\mathbf{X}}_{z \overline{\, z \,}}, \, {\mathbf{X}}_{z \overline{\, z \,}} \rangle = 0.
 \end{equation*}
We differentiate both sides of 
 \begin{equation*} \label{LiuP07} 
 {\mathbf{X}}_{z}   =  \Psi {\mathbf{F}}_{0}, \quad \text{where} \quad   {\mathbf{F}}_{0} :=\begin{bmatrix}   f_{1} + f_{2}  \\ \; -i \left(  f_{1} - f_{2}  \right) \; \\  1 - f_{1} f_{2}  \\  1 + f_{1} f_{2}  \end{bmatrix},  
  \end{equation*}
  with respect to $\overline{\, z \,}$ to obtain the decomposition
 \begin{equation*} \label{LiuP08} 
 {\mathbf{X}}_{z \overline{\, z \,} }   =  {\Psi}_{ \overline{\, z \,} } {\mathbf{F}}_{0} + \Psi    {\left(\, f_{1} \, \right)}_{\overline{\, z \,}}   {\mathbf{F}}_{2} 
 +  \Psi    {\left(\, f_{2} \, \right)}_{\overline{\, z \,}}   {\mathbf{F}}_{1}, 
  \end{equation*}
  where we define
  \begin{equation*} \label{LiuP09} 
     {\mathbf{F}}_{2} :=  \begin{bmatrix}  1 \\ \; -i  \; \\ - f_{2}  \\ f_{2}  \end{bmatrix}  
     \quad \text{and} \quad 
     {\mathbf{F}}_{1} :=   \begin{bmatrix}   1  \\ i  \; \\  - f_{1}   \\ f_{1}   \end{bmatrix}.  
  \end{equation*}
A straightforward computation shows that 
  \begin{equation*} \label{LiuP10} 
     \begin{cases}
          \langle     {\mathbf{F}}_{1}, \,     {\mathbf{F}}_{2} \rangle = 2, \\
          \langle     {\mathbf{F}}_{i}, \,     {\mathbf{F}}_{j} \rangle = 0 \; \text{for} \; \left(i, j \right) \neq \left(1, 2 \right),  \left(2, 1 \right). 
     \end{cases}
  \end{equation*}
It follows that
    \begin{equation*} \label{LiuP11} 
  0 = \langle  {\mathbf{X}}_{z \overline{\, z \,}}, \, {\mathbf{X}}_{z \overline{\, z \,}} \rangle = 2  {\Psi}^{2}    {\left(\, f_{1} \, \right)}_{\overline{\, z \,}}  
   {\left(\, f_{2} \, \right)}_{\overline{\, z \,}}  \,  \langle     {\mathbf{F}}_{1}, \,     {\mathbf{F}}_{2} \rangle  = 4  {\Psi}^{2}    {\left(\, f_{1} \, \right)}_{\overline{\, z \,}}  
   {\left(\, f_{2} \, \right)}_{\overline{\, z \,}}. 
 \end{equation*}
We recall the assumption ${\Psi}(z) \neq 0$ for all $z \in \Omega$.  We conclude that $ {\left(\, f_{1} \, \right)}_{\overline{\, z \,}} =0$ or 
$ {\left(\, f_{2} \, \right)}_{\overline{\, z \,}} =0$ at each point $z \in \Omega$. This completes the proof.
\end{proof}
 
It remains to explain how to use the Liu integrable system for the marginally trapped surface 
\begin{enumerate}
 \item[\textbf{(1)}] ${\Psi}_{\overline{\, z \,}} = \overline{\,  {\Psi}_{\overline{\, z \,}}   \,}$,
  \item[\textbf{(2)}] ${\left(\,\Psi f_{1} f_{2} \, \right)}_{\overline{\, z \,}} = \overline{\, {\left(\,\Psi f_{1} f_{2} \, \right)}_{\overline{\, z \,}}  \,}$,
 \item[\textbf{(3)}] ${\left(\, \Psi f_{1} \,\right)}_{\overline{\, z \,}} = \overline{\, {\left(\, \Psi f_{2} \,\right)}_{\overline{\, z \,}}    \,} \quad \Leftrightarrow \quad 
 \overline{\, {\left(\, \Psi f_{1} \,\right)}_{\overline{\, z \,}}    \,}= {\left(\, \Psi f_{2} \,\right)}_{\overline{\, z \,}}$,
   \item[\textbf{(4)}]  ${\left(\, f_{1} \, \right)}_{\overline{\, z \,}}  \, {\left(\, f_{2} \, \right)}_{\overline{\, z \,}}  = 0$. 
 \end{enumerate} 
to discover the key equations in our integrable system in Definition \ref{FWD02}:
\begin{enumerate}
 \item[\textbf{(1)}] ${h}_{\overline{\,z\,} } =0$,
  \item[\textbf{(2)}] ${\mathcal{M}}_{ z \overline{\,z\,} }= \left( \textrm{Re} \,  h \right)  \,  {\mathcal{N}}_{ z \overline{\,z\,} }$,
 \end{enumerate} 
 which were essential in our second Weierstrass representation in Corollary \ref{Poisson 02}. 
 We keep the notations in Lemma \ref{Liu} and consider the case ${\left(\, f_{2} \, \right)}_{\overline{\, z \,}} \equiv 0$. We have 
\begin{equation*} \label{fromLiu01} 
       \Psi f_{1} = \frac{ \, {\left({\mathbf{x}}_{1} \right)}_{z}  + i {\left({\mathbf{x}}_{2} \right)}_{z} \, }{2} \quad \text{and} \quad
    \Psi f_{2} = \frac{ \,  {\left({\mathbf{x}}_{1} \right)}_{z}  - i  {\left({\mathbf{x}}_{2} \right)}_{z} \, }{2}. \\       
\end{equation*}
  We make an additional assumption that 
the function $\Psi = \frac{ \,{\left({\mathbf{x}}_{3} \right)}_{z}  + {\left({\mathbf{x}}_{4} \right)}_{z}\, }{2}$ never vanish on $\Omega$. (This assumption will be not required in 
our second Weierstrass representation in Corollary \ref{Poisson 02}.) We define the three functions  
${\mathcal{M}}, {\mathcal{N}} :  \Omega \to \mathbb{R}$ and  $h: \Omega \to \mathbb{C}$ by 
\begin{equation*} \label{fromLiu02} 
          \quad  {\mathcal{M}} :=  {\mathbf{x}}_{1}, \quad  {\mathcal{N}} :=   {\mathbf{x}}_{3} + {\mathbf{x}}_{4}, \quad
               h := f_{2}  = \frac{\, \Psi   f_{2} \,}{\Psi  }  = \frac{\, {\left({\mathbf{x}}_{1} \right)}_{z} -
                i {\left({\mathbf{x}}_{2} \right)}_{z}  \,}{\, {\left({\mathbf{x}}_{3} \right)}_{z}  + {\left({\mathbf{x}}_{4} \right)}_{z} \,  } 
               =   \frac{\, {\left({\mathbf{x}}_{1} \right)}_{z} - i {\left({\mathbf{x}}_{2} \right)}_{z}  \,}{\,   {\mathcal{N}}_{z} \,}. 
\end{equation*}
We note that $h=f_{2}$ is holomorphic in $\Omega$ and that ${\mathcal{N}} = \overline{\, {\mathcal{N}}  \,}$. We observe 
 \begin{eqnarray*} \label{fromLiu03} 
2    \Psi f_{2} =  {\left({\mathbf{x}}_{1} \right)}_{z}  - i  {\left({\mathbf{x}}_{2} \right)}_{z} &=&  {\mathcal{N}}_{z} h,  \\
   - i  {\left({\mathbf{x}}_{2} \right)}_{z} &=& - {\left({\mathbf{x}}_{1} \right)}_{z} + \left( \,  {\left({\mathbf{x}}_{1} \right)}_{z} - i {\left({\mathbf{x}}_{2} \right)}_{z}     \, \right)
   = - {\mathcal{M}}_{z} +   {\mathcal{N}}_{z}  h, \\  
2     \Psi f_{1} = {\left({\mathbf{x}}_{1} \right)}_{z}  + i  {\left({\mathbf{x}}_{2} \right)}_{z} &=&  {\mathcal{M}}_{z}  - \left(  - {\mathcal{M}}_{z} +   {\mathcal{N}}_{z}  h \right) 
   =  2 {\mathcal{M}}_{z} -   {\mathcal{N}}_{z}  h. 
 \end{eqnarray*}
We conclude that 
 \begin{eqnarray*} \label{fromLiu04} 
 0 &=&  {\left(\,  \Psi f_{1} \,\right)}_{\overline{\, z \,}} - \overline{\, {\left(\,  \Psi f_{2} \,\right)}_{\overline{\, z \,}}    \,}  \\
    &=& {\left(\,    {\mathcal{M}}_{z} -  \frac{1}{\, 2 \, } {\mathcal{N}}_{z}  h       \,\right)}_{\overline{\, z \,}} - \overline{\, {\left(\,    \frac{1}{\, 2 \, } {\mathcal{N}}_{z}  h  \,\right)}_{\overline{\, z \,}}    \,}  \\ 
    &=& \left(\,    {\mathcal{M}}_{z  \overline{\, z \,} } -  \frac{1}{\, 2 \, } {\mathcal{N}}_{z\overline{\, z \,}}  h   - 
    \frac{1}{\, 2 \, } {\mathcal{N}}_{z}  h_{\overline{\, z \,}}   \,\right) -   \overline{\, {\left(\, \frac{1}{\, 2 \, } {\mathcal{N}}_{z\overline{\, z \,}}  h +
    \frac{1}{\, 2 \, } {\mathcal{N}}_{z}  h_{\overline{\, z \,}}  \,\right)}      \,}  \\ 
    &=& {\mathcal{M}}_{ z \overline{\,z\,} } - \left( \textrm{Re} \,  h \right)  \,  {\mathcal{N}}_{ z \overline{\,z\,} }  -  \textrm{Re} \,  \left(  h_{ \overline{\, z \,} } {\mathcal{N}}_{z} \right) \\
   &=& {\mathcal{M}}_{ z \overline{\,z\,} } - \left( \textrm{Re} \,  h \right)  \,  {\mathcal{N}}_{ z \overline{\,z\,} }.  
 \end{eqnarray*}
This completes the desired reduction. 

\begin{rem} \label{geo alg}
 The geometric idea to construct our algebraic reduction of the Liu integrable system is revealed our Weierstrass represention (Corollary \ref{Poisson 02} and Remark \ref{Poisson 02 Remarks}) for 
 marginally trapped surfaces in ${\mathbb{L}}^{4}$, which generalizes Weierstrass representation of the second kind (due to O. Kobayashi \cite[Corollary 1.3]{Kobayashi 1983}) for maximal surfaces in ${\mathbb{L}}^{3}$.
\end{rem}

 \section{Three Weierstrass representations} \label{three Weierstrass}

The $4$-dimensional Lorentz-Minkowski space ${\mathbb{L}}^{4}$ is the real vector space ${\mathbb{R}}^{4}$ equipped with the Lorentzian metric
 \begin{equation*} \label{LM 01} 
  \langle \; , \; \rangle = {dx_{1}}^{2}+ {dx_{2}}^{2}+{dx_{3}}^{2}-{dx_{4}}^{2}, 
 \end{equation*}
 where $x_{1}, x_{2}, x_{3}, x_{4}$ denotes the canonical coordinates in ${\mathbb{R}}^{4}$. The standard complexificiation of $\langle \; , \; \rangle$ induces 
 the symmetric bilinear form on ${\mathbb{C}}^{4}$. 

  The purpose of this section is to present various conformal representation for marginally trapped surfaces in ${\mathbb{L}}^{4}$. A spacelike surface in Lorentz-Minkowski space ${\mathbb{L}}^{4}$ is called a marginally trapped surface if its mean curvature vector $\mathbf{H}$ satisfies the null condition:
  \begin{equation*} 
      \langle \mathbf{H},  \mathbf{H} \rangle=0.
  \end{equation*}

  \begin{thm} [\textbf{First Weierstrass representation for marginally trapped surfaces in ${\mathbb{L}}^{4}$}] \label{Poisson 01} 
Let $\Omega \subset {\mathbb{R}}^2 \equiv \mathbb{C}$ be a simply connected domain with the complex coordinate $z$.
Let the triple $\left(g, \, \mathcal{P}, \, \mathcal{Q}\right)$ be a Weierstrass data of the first kind.
The holomorphic function $g : \Omega \to \mathbb{C}-\left\{0\right\}$ and two ${\mathcal{C}}^{2}$ functions 
$\mathcal{P},  \mathcal{Q}: \Omega \to \mathbb{R}$ satisfy the equation
 \begin{equation*} \label{Poisson 01a} 
 {\mathcal{P}}_{z \overline{\,z\,}} =  {\vert \, g \, \vert}^{2}  \;  {\mathcal{Q}}_{z \overline{\,z\,}},
 \end{equation*}
 and the condition
  \begin{equation*} \label{Poisson 01b} 
  {\mathcal{P}}_{z}   \neq  {\vert \, g \, \vert}^{2} \,  {\mathcal{Q}}_{z}.
 \end{equation*}
Then, there exists a conformal parameterization $\mathbf{X}={\mathbf{X}}_{\left[\,g, \, \mathcal{P}, \, \mathcal{Q} \, \right]}:  \Omega \to {\mathbb{L}}^{4}$ of the marginally trapped surface 
$\Sigma=\mathbf{X} \left( \Omega \right)$ in Lorentz-Minkowski space ${\mathbb{L}}^{4}$ satisfying the following two equalities:
 \begin{equation*} \label{Poisson 01c} 
 {\mathbf{X}}_{z}   =   \begin{bmatrix}  \; {\left({\mathbf{x}}_{1} \right)}_{z} \; \\[4pt]   {\left({\mathbf{x}}_{2} \right)}_{z} \\[4pt]  {\left({\mathbf{x}}_{3} \right)}_{z} \\[4pt]  {\left({\mathbf{x}}_{4} \right)}_{z} \end{bmatrix}  
 =  {\mathcal{P}}_{z} \begin{bmatrix}  \frac{1}{\,g\,} \\[4pt]  \; \frac{i}{\,g\,}  \; \\[4pt]  1 \\[4pt]  1 \end{bmatrix} 
  +  {\mathcal{Q}}_{z} \begin{bmatrix} g \\[4pt]  - i g \\[4pt]   -1  \\[4pt]   1 \end{bmatrix}  \quad \text{and} \quad
 {\mathbf{X}}_{z \overline{\,z\,}} = {\mathcal{Q}}_{z \overline{\,z\,}}
 \begin{bmatrix} 2 \, \textrm{Re} \, g  \\[4pt]   2 \, \textrm{Im} \, g \\[4pt]   -1+{\vert  \, g \, \vert}^{2}  \\[4pt]   1+{\vert  \, g \, \vert}^{2} \end{bmatrix}.
  \end{equation*}
  In particular, we have ${\mathbf{x}}_{3}(z)=\mathcal{P}(z)-\mathcal{Q}(z)$ and ${\mathbf{x}}_{4}(z)=\mathcal{P}(z)+\mathcal{Q}(z)$, up to additive constants.
The Gauss map ${\mathcal{N}}$ of the marginally trapped surface $\Sigma$ defined by 
   \begin{equation*}\label{Poisson 01d} 
   {\mathcal{N}}:= \begin{bmatrix} 2 \, \textrm{Re} \, g  \\[4pt]   2 \, \textrm{Im} \, g \\[4pt]   -1+{\vert  \, g \, \vert}^{2}  \\[4pt]   1+{\vert  \, g \, \vert}^{2} \end{bmatrix}
  \end{equation*}
 satisfies the orthogonality conditions $\langle  {\mathbf{X}}_{z} , {\mathcal{N}} \rangle=0$ and $\langle {\mathcal{N}}, {\mathcal{N}} \rangle=0$. (In particular, $\mathcal{N}$ is 
null.) The conformal metric induced by
 the patch ${\mathbf{X}}(z)$ is  
     \begin{equation*}  \label{Poisson 01e} 
 {{ds}_{{}_{\Sigma}}}^{2} =
  \frac{4}{ \, {\vert \, g \, \vert}^{2} \, }  \,  {\left\vert \,   {\mathcal{P}}_{z}  -        {\vert \, g \, \vert}^{2}     {\mathcal{Q}}_{z}        \, \right\vert}^{2}  \, {\vert \, dz \, \vert}^{2} .
   \end{equation*}
\end{thm}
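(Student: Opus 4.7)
The plan is to construct the conformal parameterization $\mathbf{X}$ by first writing down its Wirtinger derivative via the formula in the statement and then invoking Poincar\'{e}'s lemma; all remaining assertions will follow from direct bilinear computations in $\mathbb{C}^{4}$ once one notices two algebraic features of the two frame vectors that appear.

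I would set $\mathbf{V}_{1}:=\left(\tfrac{1}{g},\tfrac{i}{g},1,1\right)^{T}$ and $\mathbf{V}_{2}:=\left(g,-ig,-1,1\right)^{T}$ and define the $\mathbb{C}^{4}$-valued one-form candidate $\Phi:=\mathcal{P}_{z}\,\mathbf{V}_{1}+\mathcal{Q}_{z}\,\mathbf{V}_{2}$ on $\Omega$. The first task is to produce a real map $\mathbf{X}:\Omega\to\mathbb{R}^{4}$ with $\mathbf{X}_{z}=\Phi$. Since $\Omega$ is simply connected, by Poincar\'{e}'s lemma this reduces to checking, component by component, that $\Phi_{\overline{z}}$ is \emph{real}. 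The holomorphicity $g_{\overline{z}}=0$ gives $(\mathbf{V}_{j})_{\overline{z}}=0$, so $\Phi_{\overline{z}}=\mathcal{P}_{z\overline{z}}\mathbf{V}_{1}+\mathcal{Q}_{z\overline{z}}\mathbf{V}_{2}$; now the Weierstrass compatibility $\mathcal{P}_{z\overline{z}}=|g|^{2}\mathcal{Q}_{z\overline{z}}$ yields $\Phi_{\overline{z}}=\mathcal{Q}_{z\overline{z}}\bigl(|g|^{2}\mathbf{V}_{1}+\mathbf{V}_{2}\bigr)$, and the identities $\tfrac{|g|^{2}}{g}+g=2\,\mathrm{Re}\,g$ and $\tfrac{i|g|^{2}}{g}-ig=2\,\mathrm{Im}\,g$ show that $|g|^{2}\mathbf{V}_{1}+\mathbf{V}_{2}=\mathcal{N}$. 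Thus $\Phi_{\overline{z}}=\mathcal{Q}_{z\overline{z}}\mathcal{N}\in\mathbb{R}^{4}$, giving simultaneously the integrability of $\Phi$ and the formula $\mathbf{X}_{z\overline{z}}=\mathcal{Q}_{z\overline{z}}\mathcal{N}$ asserted in the statement; reading off the third and fourth components of $\Phi$ also gives $\mathbf{x}_{3}=\mathcal{P}-\mathcal{Q}$ and $\mathbf{x}_{4}=\mathcal{P}+\mathcal{Q}$ up to constants.

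Next I would establish conformality and the induced-metric formula. The key algebraic fact, checked from the signature $(+,+,+,-)$ and $i^{2}=-1$, is that $\mathbf{V}_{1},\mathbf{V}_{2}$ are both null and mutually orthogonal with respect to the complexified form: $\langle\mathbf{V}_{1},\mathbf{V}_{1}\rangle=\langle\mathbf{V}_{2},\mathbf{V}_{2}\rangle=\langle\mathbf{V}_{1},\mathbf{V}_{2}\rangle=0$. This instantly gives $\langle\mathbf{X}_{z},\mathbf{X}_{z}\rangle\equiv 0$. A parallel computation yields $\langle\mathbf{V}_{1},\overline{\mathbf{V}_{1}}\rangle=2/|g|^{2}$, $\langle\mathbf{V}_{2},\overline{\mathbf{V}_{2}}\rangle=2|g|^{2}$, and $\langle\mathbf{V}_{1},\overline{\mathbf{V}_{2}}\rangle=\langle\mathbf{V}_{2},\overline{\mathbf{V}_{1}}\rangle=-2$, and expanding $\langle\mathbf{X}_{z},\mathbf{X}_{\overline{z}}\rangle$ collapses to the perfect square $\tfrac{2}{|g|^{2}}\bigl|\mathcal{P}_{z}-|g|^{2}\mathcal{Q}_{z}\bigr|^{2}$. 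Doubling gives the displayed conformal factor, and the hypothesis $\mathcal{P}_{z}\neq|g|^{2}\mathcal{Q}_{z}$ guarantees strict positivity, so $\mathbf{X}$ is a conformal \emph{spacelike} immersion.

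Finally I verify the Gauss-map assertions and deduce marginal trappedness. The identities $\langle\mathbf{V}_{1},\mathcal{N}\rangle=\langle\mathbf{V}_{2},\mathcal{N}\rangle=0$ are two direct calculations, proving $\langle\mathbf{X}_{z},\mathcal{N}\rangle=0$; and $\langle\mathcal{N},\mathcal{N}\rangle=4|g|^{2}+(|g|^{2}-1)^{2}-(|g|^{2}+1)^{2}=0$ shows $\mathcal{N}$ is null. Since in any conformal parametrization the mean curvature vector $\mathbf{H}$ is a (nonzero real scalar) multiple of $\mathbf{X}_{z\overline{z}}=\mathcal{Q}_{z\overline{z}}\mathcal{N}$, it is a multiple of the null vector $\mathcal{N}$, and therefore $\langle\mathbf{H},\mathbf{H}\rangle=0$: the surface is marginally trapped. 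I do not foresee a conceptual obstacle; the whole argument is a sequence of $\mathbb{C}^{4}$-bilinear checks, and the only subtle point is the observation that Poincar\'{e}'s lemma produces $\mathbf{X}$ precisely because the Weierstrass equation $\mathcal{P}_{z\overline{z}}=|g|^{2}\mathcal{Q}_{z\overline{z}}$ is exactly the real-valuedness condition on $\Phi_{\overline{z}}$, making this equation appear as an integrability constraint rather than an extrinsic hypothesis.
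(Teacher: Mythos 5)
Your proposal is correct and takes essentially the same route as the paper's own proof: you integrate the same $\mathbb{C}^{4}$-valued form $\mathcal{P}_{z}\mathbf{V}_{1}+\mathcal{Q}_{z}\mathbf{V}_{2}$ by observing that the Weierstrass equation makes its $\overline{z}$-derivative equal to the real vector $\mathcal{Q}_{z\overline{z}}\mathcal{N}$ (Poincar\'{e}'s lemma on the simply connected $\Omega$), and you verify conformality, the metric formula, the orthogonality relations, and the nullity of $\mathbf{H}=\tfrac{4}{\Lambda}\mathcal{Q}_{z\overline{z}}\mathcal{N}$ through exactly the bilinear identities the paper computes for its frame $\mathbf{G}_{1},\mathbf{G}_{2}$. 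All the identities you invoke check out, so there is no gap.
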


\begin{proof}
The vector ${\mathcal{N}}$ is null: 
       \begin{equation*} \label{Poisson 01e17} 
 \langle  {\mathcal{N}} , {\mathcal{N}} \rangle = 
 {\left( \,  2 \, \textrm{Re} \, g \, \right)}^{2} +  {\left( \, 2 \, \textrm{Im} \, g  \, \right)}^{2}  +  {\left( \,  -1+{\vert  \, g \, \vert}^{2} \, \right)}^{2}  -  {\left( \,  1+{\vert  \, g \, \vert}^{2} \, \right)}^{2} 
 =0.
     \end{equation*}    
We introduce the mappings ${\mathcal{E}}, {\mathbf{G}}_{1}, {\mathbf{G}}_{2}$ defined on the domain $\Omega$: 
 \begin{equation*} \label{Poisson 01e02} 
  {\mathbf{G}}_{1}  = \begin{bmatrix}  \frac{1}{\,g\,} \\[4pt]  \; \frac{i}{\,g\,}  \; \\[4pt]  1 \\[4pt]  1 \end{bmatrix},  \;\; 
   {\mathbf{G}}_{2} = \begin{bmatrix} g \\[4pt]  - i g \\[4pt]   -1  \\[4pt]   1 \end{bmatrix},  \;\; 
 {\mathcal{E}}  =  {\mathcal{P}}_{z}  {\mathbf{G}}_{1} +  {\mathcal{Q}}_{z} {\mathbf{G}}_{2}
  \in {\mathbb{C}}^{4}.
  \end{equation*}
  We note that  $ {\mathcal{Q}}_{z \overline{\,z\,}}  \in {\mathbb{R}}$ and ${\mathcal{N}} \in {\mathbb{R}}^{4}$.
We use the holomorphicity of $g$ and the assumption ${\mathcal{P}}_{z  \overline{\,z\,}} =   g  \overline{\, g \,}  \,  {\mathcal{Q}}_{z  \overline{\,z\,}}$ to find that  
the mapping ${\mathcal{E}}_{F}$ is ${\mathbb{R}}^{4}$-valued. Indeed, we have 
 \begin{equation*} \label{Poisson 01e05} 
   {\mathcal{E}}_{\overline{\,z\,}} = {\mathcal{P}}_{z  \overline{\,z\,}} \begin{bmatrix}  \frac{1}{\,g\,} \\[4pt]  \; \frac{i}{\,g\,}  \; \\[4pt]  1 \\[4pt]  1 \end{bmatrix} 
  +  {\mathcal{Q}}_{z  \overline{\,z\,}} \begin{bmatrix} g \\[4pt]  - i g \\[4pt]   -1  \\[4pt]   1 \end{bmatrix}
   = {\mathcal{Q}}_{z \overline{\,z\,}} \begin{bmatrix} 2 \, \textrm{Re} \, g  \\[4pt]   2 \, \textrm{Im} \, g \\[4pt]   -1+{\vert  \, g \, \vert}^{2}  \\[4pt]   1+{\vert  \, g \, \vert}^{2} \end{bmatrix}
     = {\mathcal{Q}}_{z \overline{\,z\,}}  {\mathcal{N}} \in {\mathbb{R}}^{4}.
  \end{equation*}
Since the domain $\Omega$ is simply connected, by Poincar\'{e}'s Lemma, the observation ${\mathcal{E}}_{\overline{\,z\,}} \in {\mathbb{R}}^{4}$ implies the existence of an 
 ${\mathbb{R}}^{4}$-valued mapping $\mathbf{X}$ defined on $\Omega$ such that
 \begin{equation*} \label{Poisson 01e06} 
  \begin{bmatrix}  \; {\left({\mathbf{x}}_{1} \right)}_{z} \; \\[4pt]   {\left({\mathbf{x}}_{2} \right)}_{z} \\[4pt]  {\left({\mathbf{x}}_{3} \right)}_{z} \\[4pt]  {\left({\mathbf{x}}_{4} \right)}_{z} \end{bmatrix} =  {\mathbf{X}}_{z} =  {\mathcal{E}} = {\mathcal{P}}_{z}  {\mathbf{G}}_{1} +  {\mathcal{Q}}_{z} {\mathbf{G}}_{2}  
 =  {\mathcal{P}}_{z} \begin{bmatrix}  \frac{1}{\,g\,} \\[4pt]  \; \frac{i}{\,g\,}  \; \\[4pt]  1 \\[4pt]  1 \end{bmatrix} 
  +  {\mathcal{Q}}_{z} \begin{bmatrix} g \\[4pt]  - i g \\[4pt]   -1  \\[4pt]   1 \end{bmatrix}.   
  \end{equation*}
 It follows from the equalities ${\left({\mathbf{x}}_{3} \right)}_{z} = {\mathcal{P}}_{z} -  {\mathcal{Q}}_{z}$ and ${\left({\mathbf{x}}_{4} \right)}_{z} =  {\mathcal{P}}_{z}+ {\mathcal{Q}}_{z}$ that
  \begin{equation*} \label{Poisson 01e07}  
   {\mathbf{x}}_{3}=\mathcal{P}-\mathcal{Q} \quad \text{and} \quad {\mathbf{x}}_{4}=\mathcal{P}+\mathcal{Q},
    \end{equation*}
   up to additive constants.  
The conformality of the mapping $z \mapsto {\mathbf{X}}(z)$ follows from $   \langle   {\mathbf{X}}_{z} ,  {\mathbf{X}}_{z}  \rangle = 0$. Indeed, we have   
     \begin{equation*} \label{Poisson 01e08} 
     \langle   {\mathbf{X}}_{z} ,  {\mathbf{X}}_{z}  \rangle = 
     {{\mathcal{P}}_{z} }^{2}  \langle   {\mathbf{G}}_{1},   {\mathbf{G}}_{1} \rangle +  
   2  {\mathcal{P}}_{z} {\mathcal{Q}}_{z}  \langle   {\mathbf{G}}_{1},   {\mathbf{G}}_{2} \rangle +
      {\mathcal{Q}}_{z}     \langle   {\mathbf{G}}_{2},   {\mathbf{G}}_{2} \rangle= 0.          
   \end{equation*}  
   It is straightforward to check the identities
   \begin{equation*}  \label{Poisson 01e09}  
       \langle   {\mathbf{G}}_{1},   {\mathbf{G}}_{1} \rangle= {\left( \frac{1}{\,g\,} \right)}^{2} +  {\left( \frac{i}{\,g\,} \right)}^{2} + 1 - 1 =
       0, \;\;    \langle   {\mathbf{G}}_{1},   {\mathbf{G}}_{2} \rangle= 0, \;\;
          \langle   {\mathbf{G}}_{2},   {\mathbf{G}}_{2} \rangle= 0,
   \end{equation*}   
     \begin{equation*}  \label{Poisson 01e10}  
       \langle   {\mathbf{G}}_{1},  \overline{\, {\mathbf{G}}_{1} \,} \rangle= 
       \frac{2}{\, {\vert \, g \, \vert}^{2}\,}, \;\;  
        \langle   {\mathbf{G}}_{1},  \overline{\, {\mathbf{G}}_{2} \,} \rangle=-2, \;\;    \langle  \overline{\, {\mathbf{G}}_{1} \,},    {\mathbf{G}}_{2} \rangle=-2, 
        \;\;      \langle   {\mathbf{G}}_{2},  \overline{\, {\mathbf{G}}_{2} \,} \rangle=2 \, {\vert \, g \, \vert}^{2}. 
   \end{equation*}    
Since $\mathbf{X}$ is  ${\mathbb{R}}^{4}$-valued, we have $ {\mathbf{X}}_{ \overline{\,z\,} }  = \overline{\,  {\mathbf{X}}_{z}   \,}$. To find the conformal metric $
 {{ds}_{{}_{\Sigma}}}^{2} =   {\Lambda(z)} \, {\vert \, dz \, \vert}^{2}$  induced by the patch ${\mathbf{X}}$, we compute the conformal factor ${\Lambda}= 2 \langle   {\mathbf{X}}_{z} ,  {\mathbf{X}}_{ \overline{\,z\,} } \rangle $. We have   
   \begin{eqnarray*}   \label{Poisson 01e11}    
  \frac{1}{\,2\,} {\Lambda(z)} 
  &=&   \langle   {\mathbf{X}}_{z} ,  {\mathbf{X}}_{ \overline{\,z\,} } \rangle =   \langle   {\mathbf{X}}_{z} ,  \overline{\,  {\mathbf{X}}_{z}   \,} \rangle \\
  &=&  \langle  {\mathcal{P}}_{z}  {\mathbf{G}}_{1} +  {\mathcal{Q}}_{z} {\mathbf{G}}_{2}, 
                     \overline{\, {\mathcal{P}}_{z}\,}   \,   \overline{\, {\mathbf{G}}_{1} \, }+     \overline{\,  {\mathcal{Q}}_{z} }  \,   \overline{\, {\mathbf{G}}_{2}\,}   \rangle \\
                    &=&   {\vert \,   {\mathcal{P}}_{z}  \, \vert }^{2}    \,    \langle   {\mathbf{G}}_{1},  \overline{\, {\mathbf{G}}_{1} \,} \rangle
                    +   {\vert \,   {\mathcal{Q}}_{z}  \, \vert }^{2}    \,    \langle   {\mathbf{G}}_{2},  \overline{\, {\mathbf{G}}_{2} \,} \rangle
                    +  {\mathcal{P}}_{z}  \overline{\,   {\mathcal{Q}}_{z}  \,} \, \langle   {\mathbf{G}}_{1},  \overline{\, {\mathbf{G}}_{2} \,} \rangle
                    + \overline{\,   {\mathcal{P}}_{z}  \,}   {\mathcal{Q}}_{z}  \,  \langle   {\mathbf{G}}_{1},  \overline{\, {\mathbf{G}}_{2} \,} \rangle    \\
                   &=&       {\vert \,   {\mathcal{P}}_{z}  \, \vert }^{2}  \, \left(  \,  \frac{2}{\, {\vert \, g \, \vert}^{2}\,}\,  \right)
                    +   {\vert \,   {\mathcal{Q}}_{z}  \, \vert }^{2}  \,   \left(  \, 2 \, {\vert \, g \, \vert}^{2} \, \right)
                    +  {\mathcal{P}}_{z}  \overline{\,   {\mathcal{Q}}_{z}  \,}    \left(  \, -2  \, \right)
                    + \overline{\,   {\mathcal{P}}_{z}  \,}   {\mathcal{Q}}_{z}     \left(  \, -2 \, \right)   \\
                  &=& 2 \, {\left\vert \,   {\mathcal{P}}_{z}  \frac{1}{\,g\,} -   {\mathcal{Q}}_{z}       \overline{\,  g \,}             \, \right\vert}^{2} 
              =  \frac{2}{ \, {\vert \, g \, \vert}^{2} \, }  \,  {\left\vert \,   {\mathcal{P}}_{z}  -        {\vert \, g \, \vert}^{2}     {\mathcal{Q}}_{z}        \, \right\vert}^{2} >0.
   \end{eqnarray*} 
   It follows that 
  \begin{equation*} \label{Poisson 01e12} 
 {{ds}_{{}_{\Sigma}}}^{2} =   {\Lambda(z)} \, {\vert \, dz \, \vert}^{2}
 =  \frac{4}{ \, {\vert \, g \, \vert}^{2} \, }  \,  {\left\vert \,   {\mathcal{P}}_{z}  -        {\vert \, g \, \vert}^{2}     {\mathcal{Q}}_{z}        \, \right\vert}^{2}   \, {\vert \, dz \, \vert}^{2}
     \end{equation*}
We recall that ${\mathbf{X}}_{z \overline{\,z\,}} = {\mathcal{E}}_{ \overline{\,z\,}} = {\mathcal{Q}}_{z \overline{\,z\,}}  {\mathcal{N}}$.
Since the vector $\mathcal{N}$ is null, we find that the mean curvature vector $\mathbf{H}$ is also null:
    \begin{equation*} \label{Poisson 01e13} 
 \mathbf{H} :=  {\triangle}_{  {{ds}_{{}_{\Sigma}}}^{2} } \mathbf{X}  =     {\triangle}_{ {\Lambda(z)}  {\vert dz \vert}^{2} } \mathbf{X} = \frac{4}{\,{\Lambda(z)} \,}  {\mathbf{X}}_{z \overline{\,z\,}} = 
   \frac{4}{\,{\Lambda(z)} \,}  {\mathcal{Q}}_{z \overline{\,z\,}}  {\mathcal{N}}
     \end{equation*}
It remains to verify that ${\mathcal{N}}$ is the Gauss map in the sense that $\langle  {\mathbf{X}}_{z} , {\mathcal{N}} \rangle=0$. We observe that
       \begin{equation*} \label{Poisson 01e14} 
 \langle  {\mathbf{G}}_{1}, {\mathcal{N}} \rangle = \left\langle   \begin{bmatrix}   \frac{1}{\,g\,} \\[4pt]  \; \frac{i}{\,g\,}  \; \\[4pt]  1 \\[4pt]  1 \end{bmatrix}
    ,  \begin{bmatrix} 2 \, \textrm{Re} \, g  \\[4pt]   2 \, \textrm{Im} \, g \\[4pt]   -1+{\vert  \, g \, \vert}^{2}  \\[4pt]   1+{\vert  \, g \, \vert}^{2} \end{bmatrix} \right\rangle
     = 2 \frac{ \textrm{Re} \, g  + i  \textrm{Im} \, g    }{g} + \left( \,  -1+{\vert  \, g \, \vert}^{2}   \, \right) - \left( \,  1+{\vert  \, g \, \vert}^{2}   \, \right) =0,
        \end{equation*}   
and that
          \begin{equation*} \label{Poisson 01e15} 
 \langle   {\mathbf{G}}_{2}, {\mathcal{N}} \rangle =  \left\langle  \begin{bmatrix} g \\[4pt]  - i g \\[4pt]   -1  \\[4pt]   1 \end{bmatrix}
    ,  \begin{bmatrix} 2 \, \textrm{Re} \, g  \\[4pt]   2 \, \textrm{Im} \, g \\[4pt]   -1+{\vert  \, g \, \vert}^{2}  \\[4pt]   1+{\vert  \, g \, \vert}^{2} \end{bmatrix} \right\rangle
      = 2g \left( \textrm{Re} \, g  - i \, \textrm{Im} \, g  \right) - \left( \,  -1+{\vert  \, g \, \vert}^{2}   \, \right) - \left( \,  1+{\vert  \, g \, \vert}^{2}   \, \right) =0.
     \end{equation*} 
It is immediate that 
     \begin{equation*} \label{Poisson 01e16} 
 \langle  {\mathbf{X}}_{z} , {\mathcal{N}} \rangle =  \langle   {\mathcal{P}}_{z}  {\mathbf{G}}_{1} +  {\mathcal{Q}}_{z} {\mathbf{G}}_{2}, {\mathcal{N}} \rangle 
 =  {\mathcal{P}}_{z}  \langle    {\mathbf{G}}_{1}, {\mathcal{N}} \rangle  + {\mathcal{Q}}_{z}  \langle    {\mathbf{G}}_{2}, {\mathcal{N}} \rangle =0.
     \end{equation*} 
\end{proof}

   \begin{cor} [\textbf{Second Weierstrass representation for marginally trapped surfaces in ${\mathbb{L}}^{4}$}] \label{Poisson 02} 
Let $\Omega \subset {\mathbb{R}}^2 \equiv \mathbb{C}$ be a simply connected domain with the complex coordinate $z$.
Let the triple $\left(h, \, \mathcal{M}, \, \mathcal{N}\right)$ be a Weierstrass data of the second kind
The holomorphic function $h : \Omega \to \mathbb{C}-\left\{0\right\}$ and two ${\mathcal{C}}^{2}$ functions 
$\mathcal{M},  \mathcal{N}: \Omega \to \mathbb{R}$ satisfy the equation
 \begin{equation*} \label{Poisson 02a} 
  {\mathcal{M}}_{ z \overline{\,z\,} }= \left( \textrm{Re} \,  h \right)  \,  {\mathcal{N}}_{ z \overline{\,z\,} },
 \end{equation*}
 and the condition
  \begin{equation*} \label{Poisson 02b} 
{ \mathcal{M}}_{z}  \neq \left( \textrm{Re} \,  h \right)  \,  { \mathcal{N}}_{z}.
 \end{equation*}
Then, there exists a conformal parameterization $\mathbf{X} = {\mathbf{X}}_{\left(\,h, \, \mathcal{M}, \, \mathcal{N} \, \right)}:  \Omega \to {\mathbb{L}}^{4}$ of the marginally trapped surface 
$\Sigma=\mathbf{X} \left( \Omega \right)$ in Lorentz-Minkowski space ${\mathbb{L}}^{4}$ satisfying  
\begin{equation*}   \label{Poisson 02c} 
 {\mathbf{X}}_{z}=  \begin{bmatrix}  \; {\left({\mathbf{x}}_{1} \right)}_{z} \; \\[4pt]   {\left({\mathbf{x}}_{2} \right)}_{z} \\[4pt]  {\left({\mathbf{x}}_{3} \right)}_{z} \\[4pt]  {\left({\mathbf{x}}_{4} \right)}_{z} \end{bmatrix}  =  {\mathcal{M}}_{z} \begin{bmatrix} 1 \\[4pt]  -i \\[4pt]  -h \\[4pt]  h \end{bmatrix} 
  +  {\mathcal{N}}_{z} \begin{bmatrix} 0 \\[4pt]  i h \\[4pt]  \frac{1}{\,2\,} \left( 1 + h^{2} \right)  \\[4pt]  \frac{1}{\,2\,} \left( 1 - h^{2} \right) \end{bmatrix}. 
\end{equation*} 
In particular, we have ${{\mathbf{x}}_{1}}(z)=\mathcal{M}(z)$ and ${\mathbf{x}}_{3}(z)+{\mathbf{x}}_{4}(z)=\mathcal{N}(z)$, up to additive constants.
The conformal metric induced by the patch ${\mathbf{X}}(z)$ is 
     \begin{equation*} \label{Poisson 02d} 
 {{ds}_{{}_{\Sigma}}}^{2} 
   = 4 \,  {\vert  \, { \mathcal{M}}_{z}   - \left( \textrm{Re} \,  h \right)  \,  { \mathcal{N}}_{z}  \,    \vert}^{2}   \,  {\vert dz \vert}^{2}.
   \end{equation*}
\end{cor}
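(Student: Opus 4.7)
The strategy is to deduce Corollary \ref{Poisson 02} from the already established Theorem \ref{Poisson 01} by invoking the equivalence transformation of Theorem \ref{EBmain}. Given the Weierstrass data $(h,\mathcal{M},\mathcal{N})$ of the second kind, I set $g:=\tfrac{1}{h}$ and $\mathcal{P}:=\tfrac{1}{2}\mathcal{N}$. Theorem \ref{EBmain} then provides a $\mathcal{C}^2$ function $\mathcal{Q}:\Omega\to\mathbb{R}$ with $\mathcal{Q}_z = h\,\mathcal{M}_z - \tfrac{h^2}{2}\mathcal{N}_z$ such that the triple $(g,\mathcal{P},\mathcal{Q})$ is a Weierstrass data of the first kind. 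I then apply Theorem \ref{Poisson 01} to this triple, producing a conformal parameterization $\mathbf{X}:\Omega\to\mathbb{L}^4$ of a marginally trapped surface satisfying $\mathbf{X}_z = \mathcal{P}_z\mathbf{G}_1 + \mathcal{Q}_z\mathbf{G}_2$, with the column vectors $\mathbf{G}_1, \mathbf{G}_2$ as in that theorem.

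The next step is the algebraic reduction of this expression into the form claimed by Corollary \ref{Poisson 02}. Substituting $\mathcal{P}_z = \tfrac{1}{2}\mathcal{N}_z$, $\mathcal{Q}_z = h\,\mathcal{M}_z - \tfrac{h^2}{2}\mathcal{N}_z$, $\tfrac{1}{g}=h$, and $g=\tfrac{1}{h}$, I regroup $\mathbf{X}_z$ according to powers of $\mathcal{M}_z$ and $\mathcal{N}_z$. The $\mathcal{M}_z$ term receives the coefficient $h\,(1/h,-i/h,-1,1)^{T} = (1,-i,-h,h)^{T}$, while the $\mathcal{N}_z$ term becomes $\tfrac{1}{2}(h,ih,1,1)^{T} - \tfrac{h^2}{2}(1/h,-i/h,-1,1)^{T} = (0, ih, \tfrac{1}{2}(1+h^2), \tfrac{1}{2}(1-h^2))^{T}$, exactly matching the statement. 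The component identifications $\mathbf{x}_1=\mathcal{M}$ and $\mathbf{x}_3+\mathbf{x}_4=\mathcal{N}$ then follow by integrating, since $(\mathbf{x}_1)_z = \mathcal{M}_z$ and $(\mathbf{x}_3+\mathbf{x}_4)_z = \mathcal{N}_z$, and both $\mathcal{M}$, $\mathcal{N}$ are $\mathbb{R}$-valued.

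For the induced conformal metric, I start from the expression $ds_{\Sigma}^{2} = \tfrac{4}{|g|^2}\,|\mathcal{P}_z - |g|^2\mathcal{Q}_z|^2\,|dz|^2$ delivered by Theorem \ref{Poisson 01}. The second displayed identity of Theorem \ref{EBmain} gives $\mathcal{P}_z - |g|^2\mathcal{Q}_z = -\bar{g}\bigl(\mathcal{M}_z - (\textrm{Re}\,h)\mathcal{N}_z\bigr)$. Taking the modulus squared produces a factor $|g|^2$ that cancels the prefactor $\tfrac{1}{|g|^2}$, leaving $ds_{\Sigma}^2 = 4\,|\mathcal{M}_z - (\textrm{Re}\,h)\mathcal{N}_z|^2\,|dz|^2$, in agreement with the statement. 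The non-degeneracy $\mathcal{M}_z \neq (\textrm{Re}\,h)\mathcal{N}_z$ guarantees that the metric is positive, so $\mathbf{X}$ is a genuine spacelike immersion, and the marginally trapped property is inherited from Theorem \ref{Poisson 01}.

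The main obstacle is purely bookkeeping: carefully tracking the four components through the linear combination and confirming the cancellations, especially the vanishing first entry of the $\mathcal{N}_z$-vector, which is the delicate step. There is no conceptual difficulty, since the existence of $\mathcal{Q}$, the holomorphicity of $g$, and the integrability condition $\mathcal{P}_{z\bar{z}} = |g|^2 \mathcal{Q}_{z\bar{z}}$ — which translates exactly into the hypothesis $\mathcal{M}_{z\bar z} = (\textrm{Re}\,h)\mathcal{N}_{z\bar z}$ — are all guaranteed by Theorem \ref{EBmain}. Alternatively, one could give a direct proof mirroring that of Theorem \ref{Poisson 01}, introducing frame vectors $\mathbf{F}_1 = (1,-i,-h,h)^{T}$ and $\mathbf{F}_2 = (0, ih, \tfrac{1}{2}(1+h^2), \tfrac{1}{2}(1-h^2))^{T}$, verifying $\mathcal{E}_{\bar z}\in\mathbb{R}^4$ using $h_{\bar z}=0$ and the second-kind integrability condition, and applying Poincaré's lemma; but the reduction via Theorem \ref{EBmain} is shorter and highlights the structural role of the equivalence.
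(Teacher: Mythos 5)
Your proposal is correct and follows essentially the same route as the paper: reduce to Theorem \ref{Poisson 01} via $g=\tfrac{1}{h}$, $\mathcal{P}=\tfrac{1}{2}\mathcal{N}$, and a potential $\mathcal{Q}$ with $\mathcal{Q}_z = h\,\mathcal{M}_z - \tfrac{h^2}{2}\mathcal{N}_z$, then regroup $\mathbf{X}_z$ and convert the metric using $\mathcal{P}_z - {\vert g \vert}^2\mathcal{Q}_z = -\overline{g}\,\bigl(\mathcal{M}_z - (\textrm{Re}\, h)\,\mathcal{N}_z\bigr)$. The only cosmetic difference is that you cite Theorem \ref{EBmain} for the existence of $\mathcal{Q}$ and the first-kind conditions, whereas the paper re-derives those same computations inline; the content is identical.
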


\begin{proof}
Using the conditions $h_{\overline{\,z\,}}=0$ and ${\mathcal{M}}_{ z \overline{\,z\,} }= \frac{\, h+ \overline{\,h\,}\,}{2} \,   {\mathcal{N}}_{ z \overline{\,z\,} }$, we deduce 
\begin{equation*}   \label{Poisson 02e01} 
 \frac{\partial}{\partial {\overline{\,z\,}} } \left(\,      h \, {\mathcal{M}}_{z} - \frac{\, h^2 \,}{2} {\mathcal{N}}_{z}  \,\right)
 =h  \left( \, \frac{\, h+ \overline{\,h\,}\,}{2} \,  {\mathcal{N}}_{ z \overline{\,z\,} } \, \right) - \frac{\, h^2 \,}{2} {\mathcal{N}}_{z \overline{\,z\,}}
 = \frac{ \, {\vert \, h \, \vert}^{2} \, }{2} {\mathcal{N}}_{ z \overline{\,z\,} } \in \mathbb{R}.
\end{equation*} 
Since the domain $\Omega$ is simply connected, by Poincar\'{e}'s Lemma, this implies the existence of an 
 ${\mathbb{R}}$-valued mapping $\mathcal{Q}$ defined on $\Omega$ satisfying the equalities
 \begin{equation*}   \label{Poisson 02e02} 
   {\mathcal{Q}}_{z}  =  h \, {\mathcal{M}}_{z} - \frac{\, h^2 \,}{2} {\mathcal{N}}_{z}  \quad \text{and} \quad 
     {\mathcal{Q}}_{ z \overline{\,z\,} } = \frac{ \, {\vert \, h \, \vert}^{2} \, }{2} {\mathcal{N}}_{ z \overline{\,z\,} }. 
\end{equation*}      
We define ${\mathcal{P}}:=\frac{1}{\,2\,} {\mathcal{N}}$ and $g:=\frac{1}{\, h \,}$. Since $h : \Omega \to \mathbb{C}-\left\{0\right\}$ is holomorphic, 
$g : \Omega \to \mathbb{C}-\left\{0\right\}$ is also holomorphic. We deduce remaining conditions in Theorem \ref{Poisson 01}:
 \begin{equation*}   \label{Poisson 02e03} 
  {\mathcal{P}}_{ z \overline{\,z\,} } = \frac{1}{\,2\,}  {\mathcal{N}}_{ z \overline{\,z\,} } = \frac{1}{\,{\vert \, h \, \vert}^{2} \,}   {\mathcal{Q}}_{ z \overline{\,z\,} }  =  {\vert \, g \, \vert}^{2}    {\mathcal{Q}}_{ z \overline{\,z\,} },  
 \end{equation*} 
 and
  \begin{equation*}  \label{Poisson 02e04} 
  {\mathcal{P}}_{z} -  {\vert \, g \, \vert}^{2} \,  {\mathcal{Q}}_{z}
  =  \frac{1}{\,2\,} {\mathcal{N}}_{z} - \frac{1}{\, h \overline{\,h\,}  \,} \left(\,  h \, {\mathcal{M}}_{z} - \frac{\, h^2 \,}{2} {\mathcal{N}}_{z}  \, \right) 
  = - \frac{1}{\,  \overline{\,h\,} \,} \left( \, { \mathcal{M}}_{z}  - \left( \textrm{Re} \,  h \right)  \,  { \mathcal{N}}_{z} \, \right) \neq 0.
\end{equation*} 
Taking the triple $\left(g, {\mathcal{P}}, {\mathcal{Q}} \right)$ in Theorem \ref{Poisson 01} yields the existence of 
$\mathbf{X} =  {\mathbf{X}}_{\left[\,g, \, \mathcal{P}, \, \mathcal{Q} \, \right]}:  \Omega \to {\mathbb{L}}^{4}$ of the marginally trapped surface 
$\Sigma=\mathbf{X} \left( \Omega \right)$ in Lorentz-Minkowski space ${\mathbb{L}}^{4}$ satisfying  
\begin{equation*}   \label{Poisson 02e05} 
  \begin{bmatrix}  \; {\left({\mathbf{x}}_{1} \right)}_{z} \; \\[4pt]   {\left({\mathbf{x}}_{2} \right)}_{z} \\[4pt]  {\left({\mathbf{x}}_{3} \right)}_{z} \\[4pt]  {\left({\mathbf{x}}_{4} \right)}_{z} \end{bmatrix}  
  =  {\mathbf{X}}_{z} =  {\mathcal{P}}_{z} \begin{bmatrix}  \frac{1}{\,g\,} \\[4pt]  \; \frac{i}{\,g\,}  \; \\[4pt]   1 \\[4pt]  1 \end{bmatrix} 
  +  {\mathcal{Q}}_{z} \begin{bmatrix} g \\[4pt]  - i g \\[4pt]   -1  \\   1 \end{bmatrix}  =  {\mathcal{M}}_{z} \begin{bmatrix} 1 \\[4pt]  -i \\[4pt]  -h \\[4pt]  h \end{bmatrix} 
  +  {\mathcal{N}}_{z} \begin{bmatrix} 0 \\[4pt]  i h \\[4pt]  \frac{1}{\,2\,} \left( 1 + h^{2} \right)  \\[4pt]  \frac{1}{\,2\,} \left( 1 - h^{2} \right) \end{bmatrix}. 
\end{equation*} 
 It follows from the equalities ${\left({\mathbf{x}}_{1} \right)}_{z} = {\mathcal{M}}_{z}$ and ${\left({\mathbf{x}}_{3} \right)}_{z} + {\left({\mathbf{x}}_{4} \right)}_{z} =  {\mathcal{N}}_{z}$ that
  \begin{equation*}   \label{Poisson 02e06} 
  {\mathbf{x}}_{1}(z)=\mathcal{M}(z) \quad \text{and} \quad {\mathbf{x}}_{3}(z) + {\mathbf{x}}_{4}(z)=\mathcal{N}(z),
  \end{equation*} 
   up to additive constants.  
We recall that $g=\frac{1}{\, h \,}$ and $  {\mathcal{P}}_{z} -  {\vert \, g \, \vert}^{2} \,  {\mathcal{Q}}_{z}
  = - \frac{1}{\,  \overline{\,h\,} \,} \left( \, { \mathcal{M}}_{z}  - \left( \textrm{Re} \,  h \right)  \,  { \mathcal{N}}_{z} \, \right)$.
We use Theorem \ref{Poisson 01} to compute the conformal metric induced by the patch ${\mathbf{X}}(z)$: 
     \begin{equation*} \label{Poisson 02e07} 
 {{ds}_{{}_{\Sigma}}}^{2} 
   = \frac{4}{ \, {\vert \, g \, \vert}^{2} \, }  \,  {\left\vert \,   {\mathcal{P}}_{z}  -        {\vert \, g \, \vert}^{2}     {\mathcal{Q}}_{z}        \, \right\vert}^{2}  \, {\vert \, dz \, \vert}^{2}
  = 4 \,  {\vert  \, { \mathcal{M}}_{z}   - \left( \textrm{Re} \,  h \right)  \,  { \mathcal{N}}_{z}  \,    \vert}^{2}   \,  {\vert dz \vert}^{2}.
   \end{equation*}
\end{proof}

\begin{rem}   \label{Poisson 02 Remarks}
  \begin{enumerate}
  \item[]
  \item[\textbf{(1)}] We assume that the two triples $\left(\,h, \, \mathcal{M}, \, \mathcal{N} \, \right)$ and $\left(g, {\mathcal{P}}, {\mathcal{Q}} \right)$ satisfy the relations
       \begin{equation*}  \label{Poisson 02r01} 
 g=\frac{1}{\, h \,},  \quad {\mathcal{P}}=\frac{1}{\,2\,} {\mathcal{N}}, \quad 
     {\mathcal{Q}}_{z}  =  h \, {\mathcal{M}}_{z} - \frac{\, h^2 \,}{2} {\mathcal{N}}_{z}.
  \end{equation*} 
The proof of Corollary \ref{Poisson 02} shows the claim that 
 ${\mathbf{X}}_{\left(\,h, \, \mathcal{M}, \, \mathcal{N} \, \right)}  = {\mathbf{X}}_{\left[\,g, \, \mathcal{P}, \, \mathcal{Q} \, \right]}$, up to an additive vector constant. In other words, the Weierstrass representation induced by the triple $\left(g, {\mathcal{P}}, {\mathcal{Q}} \right)$ in Theorem  \ref{Poisson 01}
  and  the Weierstrass representation induced by the triple $\left(h, {\mathcal{M}}, {\mathcal{N}} \right)$ in Corollary \ref{Poisson 03}
  yields the same  marginally trapped surface in ${\mathbb{L}}^{4}$, up to translations. 
\item[\textbf{(2)}] In the particular case  when ${\mathcal{M}}_{z} \equiv 0$ (or equivalently, ${\mathcal{M}}$ is constant) and ${\mathcal{N}}_{z}$ is holomorphic (or equivalently, ${\mathcal{N}}$ is harmonic), the representation in Corollary \ref{Poisson 02} for 
marginally trapped surface in ${\mathbb{L}}^{4}$ reduces to 
\begin{equation*}   \label{Poisson 02r02} 
 {\mathbf{X}}_{z} \, dz =  \begin{bmatrix}   {\left({\mathbf{x}}_{1} \right)}_{z} \\  {\left({\mathbf{x}}_{2} \right)}_{z} \\ {\left({\mathbf{x}}_{3} \right)}_{z} \\ {\left({\mathbf{x}}_{4} \right)}_{z} \end{bmatrix}  dz 
  =   {\mathcal{N}}_{z} \begin{bmatrix} 0 \\ i h \\ \frac{1}{\,2\,} \left( 1 + h^{2} \right)  \\ \frac{1}{\,2\,} \left( 1 - h^{2} \right) \end{bmatrix} dz,  
\end{equation*} 
which recovers the Weierstrass representation of the second kind (due to O. Kobayashi \cite[Corollary 1.3]{Kobayashi 1983}) for maximal surfaces in
 ${\mathbb{L}}^{3} = {\mathbb{L}}^{4} \cap \{x_{1}=0\}$ equipped with the Lorentzian metric ${dx_{2}}^{2}+{dx_{3}}^{2}-{dx_{4}}^{2}$. The induced conformal metric by the patch ${\mathbf{X}}$ is 
     \begin{equation*}  \label{Poisson 02r03} 
 {{ds}_{{}_{\Sigma}}}^{2} 
  = 4 \,  {\vert  \, { \mathcal{M}}_{z}  - \left( \textrm{Re} \,  h \right)  \,  { \mathcal{N}}_{z}  \,    \vert}^{2}   \,  {\vert \, dz \, \vert}^{2}
  = 4  \, {\left( \textrm{Re} \,  h \right)}^{2}  \, {\vert \, { \mathcal{N}}_{z} \, \vert}^{2}  \,  {\vert \, dz \, \vert}^{2}.
   \end{equation*}
  \end{enumerate}
\end{rem}

Minimal surfaces in Euclidean space ${\mathbb{R}}^{3} = {\mathbb{L}}^{4} \cap \{x_{4}=0\}$ and 
maximal surfaces in Lorentz-Minkowski space ${\mathbb{L}}^{3} = {\mathbb{L}}^{4} \cap \{x_{3}=0\}$ are 
examples of spacelike surfaces with null mean curvature vector in ${\mathbb{L}}^{4}$.  
We present a geometric representation for marginally trapped surfaces, which contains the classical Weierstrass representations for 
minimal surfaces and maximal surfaces simultaneously. The idea is to use Theorem \ref{Poisson 02} to prescribe the two height functions and the 
complexified Gauss map for the marginally trapped surface in ${\mathbb{L}}^{4}$.

  \begin{cor} [\textbf{Third Weierstrass representation for marginally trapped surfaces in ${\mathbb{L}}^{4}$}] \label{Poisson 03} 
Let $\Omega \subset {\mathbb{R}}^2  \equiv \mathbb{C}$ be a simply connected domain with the complex coordinate $z$.
Given a holomorphic function $g : \Omega \to \mathbb{C}-\left\{0\right\}$, we consider two ${\mathcal{C}}^{2}$ functions 
$\mathcal{A},  \mathcal{B}: \Omega \to \mathbb{R}$ satisfying the equation
 \begin{equation*} \label{Poisson 03a} 
 {\mathcal{A}}_{z \overline{\,z\,}} = \frac{\,  -1 +  {\vert \, g \, \vert}^{2}  \, }{\,  1 + {\vert \, g \, \vert}^{2}  \,}  \;  {\mathcal{B}}_{z \overline{\,z\,}},
 \end{equation*}
 and the condition
  \begin{equation*} \label{Poisson 03b} 
 {\mathcal{A}}_{z} \neq \frac{\,  -1 +  {\vert \, g \, \vert}^{2}  \, }{\,  1 + {\vert \, g \, \vert}^{2}  \,}  \;  {\mathcal{B}}_{z}.
 \end{equation*}
Then, there exists a conformal parameterization $\mathbf{X}={\mathbf{X}}_{\left\{\,g, \, \mathcal{A}, \, \mathcal{B} \, \right\}}:  \Omega \to {\mathbb{L}}^{4}$ of the marginally trapped surface 
$\Sigma=\mathbf{X} \left( \Omega \right)$ in Lorentz-Minkowski space ${\mathbb{L}}^{4}$ satisfying  
\begin{equation*}   \label{Poisson 03e01a} 
 {\mathbf{X}}_{z} =    \begin{bmatrix}  \; {\left({\mathbf{x}}_{1} \right)}_{z} \; \\[4pt]   {\left({\mathbf{x}}_{2} \right)}_{z} \\[4pt]  {\left({\mathbf{x}}_{3} \right)}_{z} \\[4pt]  {\left({\mathbf{x}}_{4} \right)}_{z} \end{bmatrix}  =
  {\mathcal{A}}_{z} \begin{bmatrix} \frac{1}{\,2\,}  \left( \frac{1}{\,g\,} - g \right) \\ \frac{i}{\,2\,}  \left( \frac{1}{\,g\,} + g \right)  \\ 1 \\ 0 \end{bmatrix} 
  +  {\mathcal{B}}_{z} \begin{bmatrix} \frac{1}{\,2\,}  \left( \frac{1}{\,g\,} + g \right) \\ \frac{i}{\,2\,}  \left( \frac{1}{\,g\,} - g \right)  \\ 0 \\ 1 \end{bmatrix}. 
\end{equation*} 
We have ${{\mathbf{x}}_{3}}(z)=\mathcal{A}(z)$ and ${\mathbf{x}}_{4} (z)=\mathcal{B}(z)$, up to additive constants. 
The Gauss map ${\mathcal{N}}$ of the marginally trapped surface $\Sigma$ defined by 
   \begin{equation*} \label{Poisson 03e01b} 
   {\mathcal{N}}:= \begin{bmatrix} 2 \, \textrm{Re} \, g  \\[4pt]   2 \, \textrm{Im} \, g \\[4pt]   -1+{\vert  \, g \, \vert}^{2}  \\[4pt]   1+{\vert  \, g \, \vert}^{2} \end{bmatrix}
  \end{equation*}
  satisfies the orthogonality condition $\langle  {\mathbf{X}}_{z} , {\mathcal{N}} \rangle=0$. The conformal metric induced by the patch ${\mathbf{X}}(z)$ is 
     \begin{equation*} \label{Poisson 03e01c} 
 {{ds}_{{}_{\Sigma}}}^{2} 
  =  {\left( \,   \vert \, g \, \vert + \frac{1}{\vert \, g \, \vert}   \, \right)}^{2}
   \,  { \left\vert \, {\mathcal{A}}_{z} -\frac{\,  -1 +  {\vert \, g \, \vert}^{2}  \, }{\,  1 + {\vert \, g \, \vert}^{2}  \,}  \;  {\mathcal{B}}_{z} 
  \, \right\vert}^{2}  \, {\vert \, dz \, \vert}^{2}
   \end{equation*}
\end{cor}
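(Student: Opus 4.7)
The plan is to reduce Corollary \ref{Poisson 03} to the second representation (Corollary \ref{Poisson 02}) by constructing, from the prescribed triple $\left(g, \mathcal{A}, \mathcal{B}\right)$, a Weierstrass data of the second kind. Matching the third and fourth coordinates in the two representation formulas strongly suggests the ansatz $h := \frac{1}{g}$ and $\mathcal{N} := \mathcal{A} + \mathcal{B}$. The function $\mathcal{M}$ will then be produced via Poincaré's lemma, following the same template as the proofs of Theorem \ref{EAmain}, Theorem \ref{EBmain}, and Theorem \ref{Poisson 01}.

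Accordingly, I will introduce the auxiliary complex function
\begin{equation*}
\mathcal{E} := \frac{1}{\,2g\,} \left( \mathcal{A}_z + \mathcal{B}_z \right) + \frac{g}{\,2\,} \left( \mathcal{B}_z - \mathcal{A}_z \right),
\end{equation*}
and, using the holomorphicity of $g$ together with the hypothesis $\mathcal{A}_{z \overline{\,z\,}} = \frac{-1+|g|^2}{1+|g|^2}\, \mathcal{B}_{z \overline{\,z\,}}$, compute directly that
\begin{equation*}
\mathcal{E}_{\overline{\,z\,}} = \frac{\,2\, \textrm{Re}\,g\,}{\,1 + |g|^2\,}\, \mathcal{B}_{z \overline{\,z\,}} \in \mathbb{R}.
\end{equation*}
Since $\Omega$ is simply connected, Poincaré's lemma will then produce a $\mathcal{C}^2$ function $\mathcal{M}: \Omega \to \mathbb{R}$ with $\mathcal{M}_z = \mathcal{E}$. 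A short manipulation using the identity $\textrm{Re}(1/g) = \frac{\textrm{Re}\,g}{|g|^2}$ yields $\mathcal{M}_{z\overline{\,z\,}} = (\textrm{Re}\,h)\, \mathcal{N}_{z\overline{\,z\,}}$, and regrouping terms produces
\begin{equation*}
\mathcal{M}_z - (\textrm{Re}\,h)\, \mathcal{N}_z = -\,\frac{\,g\,(1+|g|^2)\,}{2\,|g|^2}\, \left( \mathcal{A}_z - \frac{\,-1+|g|^2\,}{\,1+|g|^2\,}\, \mathcal{B}_z \right),
\end{equation*}
which is nonzero by hypothesis. Thus $(h, \mathcal{M}, \mathcal{N})$ is a bona fide Weierstrass data of the second kind.

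With the data in hand, I will apply Corollary \ref{Poisson 02} to obtain a conformal marginally trapped patch $\mathbf{X}: \Omega \to \mathbb{L}^4$. To recover the explicit formula for $\mathbf{X}_z$ claimed in Corollary \ref{Poisson 03}, I will substitute $h = 1/g$, $\mathcal{N}_z = \mathcal{A}_z + \mathcal{B}_z$, and the closed-form $\mathcal{M}_z = \mathcal{E}$ into the Corollary \ref{Poisson 02} representation, and regroup by $\mathcal{A}_z$ and $\mathcal{B}_z$. The orthogonality $\langle \mathbf{X}_z, \mathcal{N} \rangle = 0$ and the Gauss map assertion then come for free from Theorem \ref{Poisson 01} (since the Theorem \ref{Poisson 01} parameter $g$, obtained from Corollary \ref{Poisson 02} as $1/h$, coincides with our $g$), while the metric identity is obtained by squaring the displayed expression for $\mathcal{M}_z - (\textrm{Re}\,h)\mathcal{N}_z$ and simplifying, using $\frac{1+|g|^2}{|g|} = |g| + \frac{1}{|g|}$.

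The main obstacle is conceptual rather than technical: identifying the correct ansatz $(h, \mathcal{N}) = (1/g,\, \mathcal{A}+\mathcal{B})$ and then spotting the arithmetic identity $\textrm{Re}(1/g) \cdot \frac{2|g|^2}{1+|g|^2} = \frac{2\,\textrm{Re}\,g}{1+|g|^2}$, which is the precise bridge that converts the Corollary \ref{Poisson 02} integrability coefficient $\textrm{Re}\,h$ into the rational function $\frac{-1+|g|^2}{1+|g|^2}$ appearing in the hypothesis of Corollary \ref{Poisson 03}. Once this is in place, every remaining verification is routine symbol-pushing.
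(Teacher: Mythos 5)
Your proof is correct, and I verified its key displayed computations: indeed $\mathcal{E}_{\overline{\,z\,}} = \frac{2\,\mathrm{Re}\,g}{1+|g|^{2}}\,\mathcal{B}_{z\overline{\,z\,}} \in \mathbb{R}$, the regrouped expression for $\mathcal{M}_{z} - (\mathrm{Re}\,h)\,\mathcal{N}_{z}$ is right, the coordinate-by-coordinate substitution of $h = 1/g$, $\mathcal{M}_{z}=\mathcal{E}$, $\mathcal{N}_{z}=\mathcal{A}_{z}+\mathcal{B}_{z}$ into the Corollary \ref{Poisson 02} formula does reproduce the claimed $\mathbf{X}_{z}$, and squaring gives the stated metric. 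But your route is genuinely different from the paper's. The paper reduces Corollary \ref{Poisson 03} directly to Theorem \ref{Poisson 01} by the purely algebraic substitution $(\mathcal{P}, \mathcal{Q}) = \left( \frac{\mathcal{A}+\mathcal{B}}{2}, \frac{-\mathcal{A}+\mathcal{B}}{2} \right)$: the hypotheses on $(\mathcal{A},\mathcal{B})$ translate in one line into $\mathcal{P}_{z\overline{\,z\,}} - |g|^{2}\mathcal{Q}_{z\overline{\,z\,}} = \frac{1}{2}\left( (1+|g|^{2})\mathcal{A}_{z\overline{\,z\,}} - (-1+|g|^{2})\mathcal{B}_{z\overline{\,z\,}} \right) = 0$ and the corresponding nonvanishing condition, so no new potential function and no application of Poincar\'{e}'s Lemma is needed at this level. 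Your detour through Corollary \ref{Poisson 02} instead forces one integration to manufacture $\mathcal{M}$, and then Corollary \ref{Poisson 02}'s own proof performs a second integration (constructing its $\mathcal{Q}$) to land back at Theorem \ref{Poisson 01}; notably these two integrations collapse, since substituting $\mathcal{M}_{z}=\mathcal{E}$ and $h=1/g$ into $\mathcal{Q}_{z} = h\,\mathcal{M}_{z} - \frac{h^{2}}{2}\mathcal{N}_{z}$ gives exactly $\mathcal{Q}_{z} = \frac{1}{2}(\mathcal{B}_{z}-\mathcal{A}_{z})$, so you arrive at the paper's first-kind triple with extra machinery. What your approach buys is an explicit Weierstrass data of the second kind $\left(1/g,\, \mathcal{M},\, \mathcal{A}+\mathcal{B}\right)$ realizing the surface, making the bridge between the second and third representations transparent; what the paper's buys is brevity and a purely algebraic verification. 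One presentational caveat: the orthogonality and Gauss-map assertions are not part of the statement of Corollary \ref{Poisson 02}, so you correctly have to reach into its proof (where $g = 1/h$ feeds Theorem \ref{Poisson 01}, and here $1/h$ coincides with your prescribed $g$) rather than cite the corollary itself — the paper sidesteps this by invoking Theorem \ref{Poisson 01} directly.
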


\begin{proof}
We take $\left(\mathcal{P}, \mathcal{Q}\right) = \left( \frac{ \, \mathcal{A} +  \mathcal{B} \, }{2},  \frac{\, - \mathcal{A} +  \mathcal{B} \, }{2} \right)$. We obtain 
      \begin{equation*} \label{Poisson 03e02} 
  {\mathcal{P}}_{z \overline{\,z\,}} -  {\vert \, g \, \vert}^{2}  \;  {\mathcal{Q}}_{z \overline{\,z\,}}
  = \frac{1}{2} 
  \left(\,  \left( 1 + {\vert \, g \, \vert}^{2}  \right)  {\mathcal{A}}_{z \overline{\,z\,}}  -  \left( -1 + {\vert \, g \, \vert}^{2}  \right)  {\mathcal{B}}_{z \overline{\,z\,}}
  \right) = 0,
    \end{equation*}
     and
       \begin{equation*} \label{Poisson 03e03} 
          {\mathcal{P}}_{z} -  {\vert \, g \, \vert}^{2} \,  {\mathcal{Q}}_{z}
          = \frac{1}{2} 
  \left(\,  \left( 1 + {\vert \, g \, \vert}^{2}  \right)  {\mathcal{A}}_{z}  -  \left( -1 + {\vert \, g \, \vert}^{2}  \right)  {\mathcal{B}}_{z}
  \right) \neq 0.
    \end{equation*}
Taking the triple $\left(g, \mathcal{P}, \mathcal{Q}\right) = \left(g, \frac{ \, \mathcal{A} +  \mathcal{B} \, }{2},  \frac{\, - \mathcal{A} +  \mathcal{B} \, }{2} \right)$ in 
Theorem \ref{Poisson 01} yields the existence of ${\mathbf{X}}_{\left[\,g, \, \mathcal{P}, \, \mathcal{Q} \, \right]}:  \Omega \to {\mathbb{L}}^{4}$ of the marginally trapped surface 
$\Sigma={\mathbf{X}}_{\left[\,g, \, \mathcal{P}, \, \mathcal{Q} \, \right]} \left( \Omega \right)$ in ${\mathbb{L}}^{4}$ satisfying 
 \begin{equation*}   \label{Poisson 03e04} 
  \frac{\partial}{\,\partial z \, } {{\mathbf{X}}_{\left[\,g, \, \mathcal{P}, \, \mathcal{Q} \, \right]} } 
 =  {\mathcal{P}}_{z} \begin{bmatrix}  \frac{1}{\,g\,} \\[4pt]  \; \frac{i}{\,g\,}  \; \\[4pt]  1 \\[4pt]  1 \end{bmatrix} 
  +  {\mathcal{Q}}_{z} \begin{bmatrix} g \\[4pt]  - i g \\[4pt]   -1  \\[4pt]   1 \end{bmatrix}.
 \end{equation*}
Setting ${\mathbf{X}}_{\left\{\,g, \, \mathcal{A}, \, \mathcal{B} \, \right\}} (z):={\mathbf{X}}_{\left[\,g, \, \mathcal{P}, \, \mathcal{Q} \, \right]}(z)$ and using $\left( {\mathcal{P}}_{z}, {\mathcal{Q}}_{z} \right) = \left( \frac{ \, {\mathcal{A}}_{z} +  {\mathcal{B}}_{z} \, }{2},  \frac{\, - {\mathcal{A}}_{z} +  {\mathcal{B}}_{z} \, }{2} \right)$, we have 
 \begin{equation*}   \label{Poisson 03e05} 
    \begin{bmatrix}  \; {\left({\mathbf{x}}_{1} \right)}_{z} \; \\[4pt]   {\left({\mathbf{x}}_{2} \right)}_{z} \\[4pt]  {\left({\mathbf{x}}_{3} \right)}_{z} \\[4pt]  {\left({\mathbf{x}}_{4} \right)}_{z} \end{bmatrix}  = 
     \frac{\partial}{\,\partial z \, } {\mathbf{X}}_{\left\{\,g, \, \mathcal{A}, \, \mathcal{B} \, \right\}}   = \frac{\partial}{\,\partial z \, } {{\mathbf{X}}_{\left[\,g, \, \mathcal{P}, \, \mathcal{Q} \, \right]} } 
 = {\mathcal{A}}_{z} \begin{bmatrix} \frac{1}{\,2\,}  \left( \frac{1}{\,g\,} - g \right) \\ \frac{i}{\,2\,}  \left( \frac{1}{\,g\,} + g \right)  \\ 1 \\ 0 \end{bmatrix} 
  +  {\mathcal{B}}_{z} \begin{bmatrix} \frac{1}{\,2\,}  \left( \frac{1}{\,g\,} + g \right) \\ \frac{i}{\,2\,}  \left( \frac{1}{\,g\,} - g \right)  \\ 0 \\ 1 \end{bmatrix}.
 \end{equation*}
 It follows from the equalities ${\left({\mathbf{x}}_{3} \right)}_{z}={\mathcal{A}}_{z}$ and  ${\left({\mathbf{x}}_{4} \right)}_{z}={\mathcal{B}}_{z}$ that
   \begin{equation*}   \label{Poisson 03e06} 
      {\mathbf{x}}_{3} (z) =  \mathcal{A} (z)  \quad \text{and} \quad   {\mathbf{x}}_{4} (z) =  \mathcal{B} (z),
   \end{equation*}
   up to additive constants. It follow from Theorem \ref{Poisson 01} that 
  \begin{equation*}   \label{Poisson 03e07} 
 {{ds}_{{}_{\Sigma}}}^{2} 
 =  \frac{4}{ \, {\vert \, g \, \vert}^{2} \, }  \,  {\left\vert \,   {\mathcal{P}}_{z}  -        {\vert \, g \, \vert}^{2}     {\mathcal{Q}}_{z}        \, \right\vert}^{2}   \, {\vert \, dz \, \vert}^{2}
 =  {\left( \,   \vert \, g \, \vert + \frac{1}{\vert \, g \, \vert}   \, \right)}^{2}
   \,  { \left\vert \, {\mathcal{A}}_{z} -\frac{\,  -1 +  {\vert \, g \, \vert}^{2}  \, }{\,  1 + {\vert \, g \, \vert}^{2}  \,}  \;  {\mathcal{B}}_{z} 
  \, \right\vert}^{2}  \, {\vert \, dz \, \vert}^{2}.
     \end{equation*}
\end{proof}

\begin{rem}     \label{Poisson 03 Remarks}
\begin{enumerate}
\item[] 
\item[\textbf{(1)}] Corollary \ref{Poisson 03} shows the method how to prescribe two height functions $ \mathcal{A}={\mathbf{x}}_{3}$, $\mathcal{B}={\mathbf{x}}_{4}$, and the 
complexified Gauss map $g$ of the marginally trapped surface in ${\mathbb{L}}^{4}$.
  \item[\textbf{(2)}] The proof of Corollary \ref{Poisson 03} indicates that ${\mathbf{X}}_{\left\{\,g, \, \mathcal{A}, \, \mathcal{B} \, \right\}}  = {\mathbf{X}}_{\left[\,g, \, \mathcal{P}, \, \mathcal{Q} \, \right]}$, up to an additive vector constant. In other words, the Weierstrass representation induced by the triple $\left(g, {\mathcal{A}}, {\mathcal{B}} \right)$ in Theorem  \ref{Poisson 03}
  and  the Weierstrass representation induced by the triple $\left(g, {\mathcal{P}}, {\mathcal{Q}} \right)$ in Corollary \ref{Poisson 01}
  yields the same  marginally trapped surface in ${\mathbb{L}}^{4}$, up to translations. 
\item[\textbf{(3)}] In the particular case when ${\mathcal{A}}_{z}$ is holomorphic and ${\mathcal{B}}_{z} \equiv 0$, the representation in Corollary \ref{Poisson 03} for 
marginally trapped surface in ${\mathbb{L}}^{4}$ reduces to 
\begin{equation*}   \label{Poisson 03r01} 
 {\mathbf{X}}_{z} \, dz =  \begin{bmatrix}   {\left({\mathbf{x}}_{1} \right)}_{z} \\  {\left({\mathbf{x}}_{2} \right)}_{z} \\ {\left({\mathbf{x}}_{3} \right)}_{z} \\ {\left({\mathbf{x}}_{4} \right)}_{z} \end{bmatrix}  
 dz    =  {\mathcal{A}}_{z} \begin{bmatrix} \frac{1}{\,2\,}  \left( \frac{1}{\,g\,} - g \right) \\ \frac{i}{\,2\,}  \left( \frac{1}{\,g\,} + g \right)  \\ 1 \\ 0 \end{bmatrix}  dz,
 \end{equation*} 
which recovers the Weierstrass representation for minimal surfaces in 
Euclidean space ${\mathbb{R}}^{3} = {\mathbb{L}}^{4} \cap \{x_{4}=0\}$. The holomorphic differential ${\mathcal{A}}_{z} dz$ is the height differential in the $x_{3}$-coordinate.
The induced conformal metric by the patch ${\mathbf{X}}$ is 
     \begin{equation*}.  \label{Poisson 03r02} 
 {{ds}_{{}_{\Sigma}}}^{2} 
  =  {\left( \,   \vert \, g \, \vert + \frac{1}{\vert \, g \, \vert}   \, \right)}^{2}
   \,  { \left\vert \, {\mathcal{A}}_{z} -\frac{\,  -1 +  {\vert \, g \, \vert}^{2}  \, }{\,  1 + {\vert \, g \, \vert}^{2}  \,}  \;  {\mathcal{B}}_{z} 
  \, \right\vert}^{2}  \, {\vert \, dz \, \vert}^{2} 
  =  {\left( \,   \vert \, g \, \vert + \frac{1}{\vert \, g \, \vert}   \, \right)}^{2} {\vert \, {\mathcal{A}}_{z} \, \vert}^{2}
  \, {\vert \, dz \, \vert}^{2}. 
   \end{equation*}
\item[\textbf{(4)}] In the particular case when ${\mathcal{A}}_{z} \equiv 0$ and ${\mathcal{B}}_{z}$ is holomorphic, the representation in Corollary \ref{Poisson 03} for 
marginally trapped surface in ${\mathbb{L}}^{4}$ reduces to 
\begin{equation*}   \label{Poisson 03e01a1} 
 {\mathbf{X}}_{z} \, dz =  \begin{bmatrix}   {\left({\mathbf{x}}_{1} \right)}_{z} \\  {\left({\mathbf{x}}_{2} \right)}_{z} \\ {\left({\mathbf{x}}_{3} \right)}_{z} \\ {\left({\mathbf{x}}_{4} \right)}_{z} \end{bmatrix}  dz 
  =   {\mathcal{B}}_{z} \begin{bmatrix} \frac{1}{\,2\,}  \left( \frac{1}{\,g\,} + g \right) \\ \frac{i}{\,2\,}  \left( \frac{1}{\,g\,} - g \right)  \\ 0 \\ 1 \end{bmatrix} dz,
\end{equation*} 
which recovers the classical Weierstrass representation for maximal surfaces in Lorentz-Minkowski space ${\mathbb{L}}^{3} = {\mathbb{L}}^{4} \cap \{x_{3}=0\}$ equipped with the
Lorentzian metric ${dx_{1}}^{2}+{dx_{2}}^{2}-{dx_{4}}^{2}$.
The holomorphic differential ${\mathcal{B}}_{z} dz$ is the height differential in the $x_{4}$-coordinate. The induced conformal metric by the patch ${\mathbf{X}}$ is 
     \begin{equation*} \label{Poisson 03e01c1} 
 {{ds}_{{}_{\Sigma}}}^{2} 
  =  {\left( \,   \frac{1 +{ \vert \, g \, \vert}^{2}  }{\vert \, g \, \vert}   \, \right)}^{2}
   \,  { \left\vert \, {\mathcal{A}}_{z} -\frac{\,  -1 +  {\vert \, g \, \vert}^{2}  \, }{\,  1 + {\vert \, g \, \vert}^{2}  \,}  \;  {\mathcal{B}}_{z} 
  \, \right\vert}^{2}  \, {\vert \, dz \, \vert}^{2} 
  =  {\left( \,   \vert \, g \, \vert - \frac{1}{\vert \, g \, \vert}   \, \right)}^{2} \, {\vert \, {\mathcal{B}}_{z} \, \vert}^{2} \, {\vert \, dz \, \vert}^{2}. 
   \end{equation*}
\end{enumerate}
\end{rem}

 \section{Geometric interpretations of  parameters} \label{parameters}
 
The purpose of this section is to reveal geometric meanings of  parameters in the elliptic, parabolic, hyperbolic deformations of our integral systems (introduced in Section \ref{integrable systems}) in terms of the geometry of marginally trapped surfaces in Lorentz-Minkowski space ${\mathbb{L}}^{4}$.

\begin{prop}[\textbf{Geometric meaning of the  parameter $\lambda$ in the parabolic deformation in Proposition \ref{FIS01}}] \label{Mparabolic}
  Let the triple $\left(g, \, \mathcal{P}, \, \mathcal{Q}\right)$ be a Weierstrass data of the first kind. Let $\lambda \in \mathbb{R}$ be a parameter constant 
 such that $g(z) \neq - \frac{1}{\,  i \lambda   \, } $ for all $z \in \Omega$. We set $g_{ {}_{\lambda} } := \frac{g}{\, 1 + i \lambda g\,}$ and ${\mathcal{P}}_{\lambda} :=\mathcal{P}$.  
 Then, the following statements hold:
 \begin{enumerate}
 \item[\textbf{(1)}] There exists a ${\mathcal{C}}^{2}$ function ${\mathcal{Q}}_{\lambda}: \Omega \to \mathbb{R}$ satisfying the integrability condition 
 \begin{equation*}  \label{Mparabolic01}
      {\left( \, {\mathcal{Q}}_{\lambda} \, \right)}_{z} = \left( \,  \frac{1}{\,g\,} + i \lambda \, \right) \left(  g    {\mathcal{Q}}_{z} - i \lambda {\mathcal{P}}_{z}      \right).  
 \end{equation*}
 Moreover, the triple $\left(g_{ {}_{\lambda} }, \,{\mathcal{P}}_{\lambda}, \, {\mathcal{Q}}_{\lambda}\right)$ becomes a Weierstrass data of the first kind. 
 \item[\textbf{(2)}] The marginally trapped surface ${\Sigma}_{\lambda}= {\mathbf{X}}_{\left[\, g_{ {}_{\lambda} }, \, {\mathcal{P}}_{\lambda}, \,{\mathcal{Q}}_{\lambda} \, \right]}  \left( \Omega \right)$ is congruent to the marginally trapped surface ${\Sigma}={\Sigma}_{0}={\mathbf{X}}_{\left[\,g, \, \mathcal{P}, \, \mathcal{Q} \, \right]} \left( \Omega \right) $ in ${\mathbb{L}}^{4}$. More concretely, 
 we have 
   \begin{equation*}  \label{Mparabolic02}
         {\mathbf{X}}_{\left[\, g_{ {}_{\lambda} }, \, {\mathcal{P}}_{\lambda}, \,{\mathcal{Q}}_{\lambda} \, \right]}
         = {\mathcal{L}}_{\lambda}^{ {}^{ \text{parabolic} }  } \left( \, {\mathbf{X}}_{\left[\,g, \, \mathcal{P}, \, \mathcal{Q} \, \right]}(z) \, \right),  
 \end{equation*}
 up to a translation in ${\mathbb{L}}^{4}$. Here, the parabolic rotation  ${\mathcal{L}}_{\lambda}^{ {}^{ \text{parabolic} }}$ in ${\mathbb{L}}^{4}$ denotes the linear transformation represented by the matrix
 \begin{equation*}  \label{Mparabolic03}
    {\mathcal{L}}_{\lambda}^{ {}^{ \text{parabolic} }} = \begin{bmatrix}
        \; 1 \; & \; 0 \; & \; 0 \; & \; 0 \; \\
        \; 0 \; & \; 1 \; & \; - \lambda \; & \;  - \lambda \; \\
        \; 0 \; & \;  \lambda \; & \; 1 - \frac{{\lambda}^{2}}{2} \; & \; - \frac{{\lambda}^{2}}{2} \; \\
        \; 0 \; & \;  - \lambda \; & \;  \frac{{\lambda}^{2}}{2} \; & \; 1+ \frac{{\lambda}^{2}}{2} \; \\       
    \end{bmatrix}.
 \end{equation*} 
 \end{enumerate}
 The item \textbf{(2)} indicates that the 
parabolic rotation  ${\mathcal{L}}_{\lambda}^{ {}^{ \text{parabolic} }}$ of the marginally trapped surface in ${\mathbb{L}}^{4}$ induces the parabolic deformation in Proposition \ref{FIS01}.  
\end{prop}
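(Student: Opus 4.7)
Item (1) is exactly the content of Proposition \ref{FIS01} and so requires nothing new; the substance lies in Item (2). My plan is to apply the First Weierstrass representation (Theorem \ref{Poisson 01}) to both the original data $(g,\mathcal{P},\mathcal{Q})$ and the deformed data $(g_{\lambda},\mathcal{P}_{\lambda},\mathcal{Q}_{\lambda})$, and then verify entrywise that the two $z$-derivatives are related by the constant matrix $\mathcal{L}_{\lambda}^{\mathrm{parabolic}}$. Once this identity of holomorphic-plus-antiholomorphic differentials is established, integration on the simply connected domain $\Omega$ yields the conclusion up to an additive vector constant in ${\mathbb{L}}^{4}$.

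The key algebraic step is the simplification
\begin{equation*}
   g_{\lambda}\,\bigl(\mathcal{Q}_{\lambda}\bigr)_{z} \;=\; g_{\lambda}\left(\frac{1}{g}+i\lambda\right)\!\left(g\,\mathcal{Q}_{z}-i\lambda\,\mathcal{P}_{z}\right) \;=\; g\,\mathcal{Q}_{z}-i\lambda\,\mathcal{P}_{z},
\end{equation*}
which follows from the defining relation $\frac{1}{\,g_{\lambda}\,}=\frac{1}{\,g\,}+i\lambda$ together with the integrability identity of Item (1). Inserting this together with $\mathcal{P}_{\lambda}=\mathcal{P}$ into the formula of Theorem \ref{Poisson 01} and reading off the four components, I expect
\begin{align*}
   {\left({\mathbf{x}}_{1}^{\lambda}\right)}_{z} &= \tfrac{1}{\,g\,}\mathcal{P}_{z}+g\,\mathcal{Q}_{z} = {\left({\mathbf{x}}_{1}\right)}_{z}, \\
   {\left({\mathbf{x}}_{2}^{\lambda}\right)}_{z} &= {\left({\mathbf{x}}_{2}\right)}_{z}-2\lambda\,\mathcal{P}_{z} = {\left({\mathbf{x}}_{2}\right)}_{z}-\lambda\bigl({\left({\mathbf{x}}_{3}\right)}_{z}+{\left({\mathbf{x}}_{4}\right)}_{z}\bigr),
\end{align*}
and similarly, after collecting the $\lambda^{2}\,\mathcal{P}_{z}$ terms and using $\mathcal{P}_{z}=\tfrac{1}{2}\bigl({\left({\mathbf{x}}_{3}\right)}_{z}+{\left({\mathbf{x}}_{4}\right)}_{z}\bigr)$ and $i\bigl(\tfrac{1}{g}\mathcal{P}_{z}-g\,\mathcal{Q}_{z}\bigr)={\left({\mathbf{x}}_{2}\right)}_{z}$, expressions for ${\left({\mathbf{x}}_{3}^{\lambda}\right)}_{z}$ and ${\left({\mathbf{x}}_{4}^{\lambda}\right)}_{z}$ that agree precisely with the third and fourth rows of $\mathcal{L}_{\lambda}^{\mathrm{parabolic}}$ acting on $\mathbf{X}_{z}$.

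The computation is purely algebraic, so there is no deep obstacle; the only mildly delicate point is keeping track of the $\lambda^{2}$ contribution in the third and fourth components, since these must precisely match the $\pm\tfrac{\lambda^{2}}{2}$ entries in the matrix, and to this end the substitution $\mathcal{P}_{z}=\tfrac{1}{2}\bigl({\left({\mathbf{x}}_{3}\right)}_{z}+{\left({\mathbf{x}}_{4}\right)}_{z}\bigr)$ is essential. As a coda, I plan to note briefly that $\mathcal{L}_{\lambda}^{\mathrm{parabolic}}$ is indeed a Lorentz isometry of ${\mathbb{L}}^{4}$ (a null rotation): it fixes the $x_{1}$-axis and the null vector $(0,0,1,1)^{T}$, and a direct check of the matrix against the diagonal form $\mathrm{diag}(1,1,1,-1)$ confirms $\mathcal{L}_{\lambda}^{T}\,\mathrm{diag}(1,1,1,-1)\,\mathcal{L}_{\lambda}=\mathrm{diag}(1,1,1,-1)$, which justifies calling it a parabolic rotation and explains the terminology \emph{parabolic deformation} adopted in Proposition \ref{FIS01}.
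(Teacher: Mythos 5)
Your proof is correct, but it follows a genuinely different route from the paper's. The paper does not work entrywise in the first-kind representation at all: it invokes the second proof of Proposition \ref{FIS01} to identify the deformed first-kind triple $\left(g_{{}_{\lambda}},\mathcal{P}_{\lambda},\mathcal{Q}_{\lambda}\right)$ with the second-kind triple $\left(h+i\lambda,\mathcal{M},\mathcal{N}\right)$, then uses the representation formula of Corollary \ref{Poisson 02} (together with the identification ${\mathbf{X}}_{\left[\,g,\,\mathcal{P},\,\mathcal{Q}\,\right]}={\mathbf{X}}_{\left(\,h,\,\mathcal{M},\,\mathcal{N}\,\right)}$ from Remark \ref{Poisson 02 Remarks}) to reduce item \textbf{(2)} to the observation that substituting $h\mapsto h+i\lambda$ into the second-kind frame vectors is exactly left multiplication by ${\mathcal{L}}_{\lambda}^{{}^{\text{parabolic}}}$. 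That route explains where the matrix comes from — in the second-kind picture the parabolic deformation is literally a translation of $h$ with $\mathcal{M},\mathcal{N}$ untouched — and it reuses the equivalence machinery of Theorems \ref{EAmain} and \ref{EBmain}. Your route stays entirely inside Theorem \ref{Poisson 01}: the simplification $g_{{}_{\lambda}}\left(\mathcal{Q}_{\lambda}\right)_{z}=g\,\mathcal{Q}_{z}-i\lambda\,\mathcal{P}_{z}$ is the right pivot, and I checked that your four component identities do come out, e.g. $\left({\mathbf{x}}_{3}^{\lambda}\right)_{z}=\lambda\left({\mathbf{x}}_{2}\right)_{z}+\left(1-\tfrac{\lambda^{2}}{2}\right)\left({\mathbf{x}}_{3}\right)_{z}-\tfrac{\lambda^{2}}{2}\left({\mathbf{x}}_{4}\right)_{z}$ via $i\left(\tfrac{1}{g}\mathcal{P}_{z}-g\,\mathcal{Q}_{z}\right)=\left({\mathbf{x}}_{2}\right)_{z}$ and $2\mathcal{P}_{z}=\left({\mathbf{x}}_{3}\right)_{z}+\left({\mathbf{x}}_{4}\right)_{z}$. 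What your approach buys is self-containedness (no appeal to Corollary \ref{Poisson 02} or the equivalence theorems) at the price of the $\lambda^{2}$ bookkeeping; the paper's buys conceptual transparency. One point in your final step is worth stating explicitly: since both $\mathbf{X}_{\lambda}$ and ${\mathcal{L}}_{\lambda}^{{}^{\text{parabolic}}}\mathbf{X}_{0}$ are ${\mathbb{R}}^{4}$-valued and the matrix is real and constant, equality of the $\partial_{z}$-derivatives conjugates to equality of the $\partial_{\overline{z}}$-derivatives, so the difference is constant on the connected domain — no fresh appeal to simple connectivity is needed here.

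One small slip in your coda: ${\mathcal{L}}_{\lambda}^{{}^{\text{parabolic}}}$ does \emph{not} fix $\left(0,0,1,1\right)^{T}$; a direct computation gives ${\mathcal{L}}_{\lambda}^{{}^{\text{parabolic}}}\left(0,0,1,1\right)^{T}=\left(0,-2\lambda,1-\lambda^{2},1+\lambda^{2}\right)^{T}$. The invariant null vector is $\left(0,0,1,-1\right)^{T}$. This does not affect the congruence argument, since your verification of ${\mathcal{L}}_{\lambda}^{T}\,\mathrm{diag}\left(1,1,1,-1\right)\,{\mathcal{L}}_{\lambda}=\mathrm{diag}\left(1,1,1,-1\right)$ (which is correct) already shows the map is a Lorentz isometry, but the stated fixed vector should be corrected.
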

  
  \begin{proof} We need the contents of the second proof of Proposition \ref{FIS01} and the proof of Corollary \ref{Poisson 02}. 
  The deformation in the item \textbf{(1)} is proved in Proposition \ref{FIS01}. First, the second proof of Proposition \ref{FIS01} reveals that the Weierstrass triple 
  $\left(g_{ {}_{\lambda} }, \,{\mathcal{P}}_{\lambda}, \, {\mathcal{Q}}_{\lambda}\right)$ of the first kind corresponds to the Weierstrass triple of the second kind
 \begin{equation*} \label{MparabolicP01}
  \left(h_{{}_{\lambda}}, \, {\mathcal{M}}_{\lambda}, \, {\mathcal{N}}_{\lambda }\right)=\left(h + i \lambda, \, \mathcal{M}, \, \mathcal{N}\right).
 \end{equation*} 
Second, the proof of Corollary \ref{Poisson 02} (and Remark \ref{Poisson 02 Remarks}) guarantees the existence of a conformal parameterization 
${\mathbf{X}}_{\lambda} : \Omega \to {\mathbb{L}}^{4}$ of the marginally trapped surface in Lorentz-Minkowski space ${\mathbb{L}}^{4}$:
 \begin{equation*}  \label{MparabolicP02}
     {\mathbf{X}}_{\lambda} := {\mathbf{X}}_{\left[\, g_{ {}_{\lambda} }, \, {\mathcal{P}}_{\lambda}, \,{\mathcal{Q}}_{\lambda} \, \right]}
   = {\mathbf{X}}_{\left(\, h_{ {}_{\lambda} }, \, {\mathcal{M}}_{\lambda}, \,{\mathcal{N}}_{\lambda} \, \right)} 
   = {\mathbf{X}}_{\left(\,  h+ i \lambda, \, {\mathcal{M}}, \, {\mathcal{N}}  \, \right)},
 \end{equation*}
 up to an additive vector constant. Corollary \ref{Poisson 02} shows that 
   \begin{equation*}   \label{MparabolicP03}  
 {\left(   {\mathbf{X}}_{0}  \right)}_{z}  =  {\mathcal{M}}_{z} \begin{bmatrix} 1 \\[4pt]  -i \\[4pt]  -h \\[4pt]  h \end{bmatrix} 
  +  {\mathcal{N}}_{z} \begin{bmatrix} 0 \\[4pt]  i h \\[4pt]  \frac{1}{\,2\,} \left( 1 + h^{2} \right)  \\[4pt]  \frac{1}{\,2\,} \left( 1 - h^{2} \right) \end{bmatrix}. 
\end{equation*} 
and that 
  \begin{equation*}   \label{MparabolicP04} 
 {\left(   {\mathbf{X}}_{\lambda}  \right)}_{z} =  {\mathcal{M}}_{z} \begin{bmatrix} 1 \\[4pt]  -i \\[4pt]  -\left(h+ i \lambda\right) \\[4pt]  h+ i \lambda \end{bmatrix} 
  +  {\mathcal{N}}_{z} \begin{bmatrix} 0 \\[4pt]  i \left(h+ i \lambda\right)  \\[4pt]  \frac{1}{\,2\,} \left( 1 + {\left(h+ i \lambda \right)}^{2} \right)  \\[4pt]  \frac{1}{\,2\,} 
 \left(1 - {\left(h+ i \lambda\right)}^{2} \right) \end{bmatrix}.
\end{equation*} 
However, a straightforward computation shows that 
  \begin{equation*}   \label{MparabolicP05} 
   {\mathcal{M}}_{z} \begin{bmatrix} 1 \\[4pt]  -i \\[4pt]  -\left(h+ i \lambda\right) \\[4pt]  h+ i \lambda \end{bmatrix} 
  +  {\mathcal{N}}_{z} \begin{bmatrix} 0 \\[4pt]  i \left(h+ i \lambda\right)  \\[4pt]  \frac{1}{\,2\,} \left( 1 + {\left(h+ i \lambda \right)}^{2} \right)  \\[4pt]  \frac{1}{\,2\,} 
 \left(1 - {\left(h+ i \lambda\right)}^{2} \right) \end{bmatrix}
 =  {\mathcal{L}}_{\lambda}^{ {}^{ \text{parabolic} }} \left( \,   {\mathcal{M}}_{z} \begin{bmatrix} 1 \\[4pt]  -i \\[4pt]  -h \\[4pt]  h \end{bmatrix} 
  +  {\mathcal{N}}_{z} \begin{bmatrix} 0 \\[4pt]  i h \\[4pt]  \frac{1}{\,2\,} \left( 1 + h^{2} \right)  \\[4pt]  \frac{1}{\,2\,} \left( 1 - h^{2} \right) \end{bmatrix}  \, \right).
\end{equation*} 
Integrating the differential equation
  \begin{equation*}   \label{MparabolicP06} 
 {\left(   {\mathbf{X}}_{\lambda}  \right)}_{z}  =  {\mathcal{L}}_{\lambda}^{ {}^{ \text{parabolic} }} \left( \,  {\left(   {\mathbf{X}}_{0}  \right)}_{z} \right)
 =    {\left(  {\mathcal{L}}_{\lambda}^{ {}^{ \text{parabolic} }}  {\mathbf{X}}_{0}  \right)}_{z},
 \end{equation*} 
  yields that,  up to an additive vector constant,   
  \begin{equation*}   \label{MparabolicP07} 
         {\mathbf{X}}_{\lambda}(z) = {\mathcal{L}}_{\lambda}^{ {}^{ \text{parabolic} }  } \left( \,    {\mathbf{X}}_{0}(z)  \, \right).  
 \end{equation*}
  \end{proof}

  \begin{prop}[\textbf{Geometric meaning of the  parameter $\tau$ in the elliptic deformation in Proposition \ref{FIS02}}] \label{Melliptic}
     Let the triple $\left(h, \, \mathcal{M}, \, \mathcal{N}\right)$ be a Weierstrass data of the second kind. Given a parameter $\tau \in \mathbb{R}$,
 we set $h_{ {}_{\tau} } := e^{-i \tau} h$ and ${\mathcal{N}}_{\tau} :=\mathcal{N}$. Then, the following statements hold:
 \begin{enumerate}
 \item[\textbf{(1)}] 
  Then, there exists a ${\mathcal{C}}^{2}$ function ${\mathcal{M}}_{\tau}: \Omega \to \mathbb{R}$ satisfying the integrability condition 
 \begin{equation*}  \label{Melliptic01}
      {\left( \, {\mathcal{M}}_{\tau} \, \right)}_{z} = e^{i \tau}  \, {\mathcal{M}}_{ z } - \frac{\,  e^{i \tau} -  e^{- i \tau}   \,}{2}  \, h \,  {\mathcal{N}}_{ z }. 
 \end{equation*}
 Moreover, the triple $\left(h_{ {}_{\tau} }, \,{\mathcal{M}}_{\tau}, \, {\mathcal{N}}_{\tau}\right)$ becomes a Weierstrass data of the second kind. 
 \item[\textbf{(2)}] The marginally trapped surface ${\Sigma}_{\tau}= {\mathbf{X}}_{\left(h_{ {}_{\tau} }, \,{\mathcal{M}}_{\tau}, \, {\mathcal{N}}_{\tau}\right)}  \left( \Omega \right)$ is congruent to the marginally trapped surface ${\Sigma}={\Sigma}_{0}={\mathbf{X}}_{\left(\,h, \, \mathcal{M}, \, \mathcal{N} \, \right)} \left( \Omega \right) $ in ${\mathbb{L}}^{4}$. More concretely, 
 we have 
   \begin{equation*}   \label{Melliptic02}
         {\mathbf{X}}_{\left(h_{ {}_{\tau} }, \,{\mathcal{M}}_{\tau}, \, {\mathcal{N}}_{\tau}\right)}
         = {\mathcal{L}}_{\tau}^{ {}^{ \text{elliptic} }  } \left( \, {\mathbf{X}}_{\left(\,h, \, \mathcal{M}, \, \mathcal{N} \, \right)}(z) \, \right),  
 \end{equation*}
 up to a translation in ${\mathbb{L}}^{4}$. Here, the elliptic rotation  ${\mathcal{L}}_{\tau}^{ {}^{ \text{elliptic} }}$ in ${\mathbb{L}}^{4}$ denotes the linear transformation represented by the matrix
 \begin{equation*}   \label{Melliptic03}
    {\mathcal{L}}_{\tau}^{ {}^{ \text{elliptic} }} = \begin{bmatrix}
        \; \cos \tau \; & \; -  \sin \tau \; & \; 0 \; & \; 0 \; \\
          \; \sin \tau \; & \;  \cos \tau \; & \; 0 \; & \; 0 \; \\
        \; 0 \; & \; 0 \; & \; 1 \; & \; 0 \; \\
        \; 0 \; & \; 0 \; & \; 0 \; & \; 1 \; \\     
    \end{bmatrix}.
 \end{equation*} 
 \end{enumerate}
 The item \textbf{(2)} indicates that the 
elliptic rotation  ${\mathcal{L}}_{\lambda}^{ {}^{ \text{elliptic} }}$ of the marginally trapped surface in ${\mathbb{L}}^{4}$ induces the elliptic deformation in Proposition \ref{FIS02}.  
  \end{prop}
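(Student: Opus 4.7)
The plan parallels the proof of Proposition \ref{Mparabolic} step by step. Item \textbf{(1)} is precisely Proposition \ref{FIS02}, so only item \textbf{(2)} requires new work. First, I would apply Corollary \ref{Poisson 02} to both Weierstrass triples of the second kind $(h, \mathcal{M}, \mathcal{N})$ and $(h_\tau, \mathcal{M}_\tau, \mathcal{N}_\tau)$, producing conformal patches $\mathbf{X}_0 := \mathbf{X}_{\left(\,h, \, \mathcal{M}, \, \mathcal{N} \, \right)}$ and $\mathbf{X}_\tau := \mathbf{X}_{\left(\,h_\tau, \, \mathcal{M}_\tau, \, \mathcal{N}_\tau \, \right)}$ of marginally trapped surfaces in $\mathbb{L}^{4}$, and read off their columnar expressions.

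Next, I would substitute $h_\tau = e^{-i\tau} h$, $\mathcal{N}_\tau = \mathcal{N}$, and the explicit integrability formula $(\mathcal{M}_\tau)_z = e^{i\tau} \mathcal{M}_z - \tfrac{1}{2}(e^{i\tau} - e^{-i\tau}) h \, \mathcal{N}_z$ into the expression for $(\mathbf{X}_\tau)_z$ given by Corollary \ref{Poisson 02}. The target is the pointwise identity $(\mathbf{X}_\tau)_z = \mathcal{L}_\tau^{ {}^{ \text{elliptic} } } (\mathbf{X}_0)_z$. For the first two components, using $e^{i\tau} = \cos\tau + i \sin\tau$ together with the formulas $(\mathbf{x}_1^{(0)})_z = \mathcal{M}_z$ and $(\mathbf{x}_2^{(0)})_z = -i \mathcal{M}_z + i h \, \mathcal{N}_z$ coming from Corollary \ref{Poisson 02}, I would verify that $(\mathbf{x}_1^{(\tau)})_z = \cos\tau \cdot (\mathbf{x}_1^{(0)})_z - \sin\tau \cdot (\mathbf{x}_2^{(0)})_z$, together with its imaginary counterpart for $\mathbf{x}_2^{(\tau)}$. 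For the third and fourth components, $\mathcal{L}_\tau^{ {}^{ \text{elliptic} } }$ acts as the identity, so the task reduces to showing $(\mathbf{x}_3^{(\tau)})_z = (\mathbf{x}_3^{(0)})_z$ and $(\mathbf{x}_4^{(\tau)})_z = (\mathbf{x}_4^{(0)})_z$. After substitution, this collapses to the algebraic identity $e^{-i\tau}(2i\sin\tau + e^{-i\tau}) = 1$, which is an immediate consequence of Euler's formula.

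Finally, since the matrix $\mathcal{L}_\tau^{ {}^{ \text{elliptic} } }$ is constant in $z$, the pointwise equality $(\mathbf{X}_\tau)_z = \mathcal{L}_\tau^{ {}^{ \text{elliptic} } } (\mathbf{X}_0)_z = (\mathcal{L}_\tau^{ {}^{ \text{elliptic} } } \mathbf{X}_0)_z$ can be integrated in $z$ to yield $\mathbf{X}_\tau = \mathcal{L}_\tau^{ {}^{ \text{elliptic} } } \mathbf{X}_0$ up to an additive vector constant in $\mathbb{L}^{4}$. The only nontrivial obstacle is the bookkeeping of complex exponentials in the $\mathbf{x}_3$ and $\mathbf{x}_4$ components, where $h_\tau^{\,2} = e^{-2i\tau} h^{2}$ produces several cross terms that must collapse; their cancellation is exactly what reflects the geometric fact that elliptic rotations in $\mathbb{L}^{4}$ preserve the timelike two-plane spanned by $\partial_{x_3}$ and $\partial_{x_4}$, consistent with the given block form of $\mathcal{L}_\tau^{ {}^{ \text{elliptic} } }$.
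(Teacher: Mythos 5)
Your proposal is correct, and the key computations check out: writing $A={\mathcal{M}}_{z}$, $B={\mathcal{N}}_{z}$, the first two slots satisfy $\left({\mathbf{x}}_{1}^{(\tau)}\right)_{z}=e^{i\tau}A-i\sin\tau\,hB=\cos\tau\left({\mathbf{x}}_{1}^{(0)}\right)_{z}-\sin\tau\left({\mathbf{x}}_{2}^{(0)}\right)_{z}$ and its companion, while in the third and fourth slots the coefficient of $h^{2}B$ collapses via $i\sin\tau\,e^{-i\tau}+\tfrac{1}{2}e^{-2i\tau}=\tfrac{1}{2}e^{-i\tau}\left(2i\sin\tau+e^{-i\tau}\right)=\tfrac{1}{2}$, which is exactly your Euler-formula identity; and since ${\mathcal{L}}_{\tau}^{{}^{\text{elliptic}}}$ is a constant real matrix and both patches are ${\mathbb{R}}^{4}$-valued, the pointwise identity integrates to the claimed congruence up to translation. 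However, your route is genuinely different from the paper's. The paper does not stay in the second-kind picture: it first invokes the second proof of Proposition \ref{FIS02}, under which the deformed triple $\left(h_{{}_{\tau}},\,{\mathcal{M}}_{\tau},\,{\mathcal{N}}_{\tau}\right)$ corresponds, via the equivalence of Theorems \ref{EAmain} and \ref{EBmain}, to the \emph{first-kind} data $\left(e^{i\tau}g,\,\mathcal{P},\,\mathcal{Q}\right)$ with $\mathcal{P}$ and $\mathcal{Q}$ untouched; in the first-kind representation of Theorem \ref{Poisson 01} the identity $\left({\mathbf{X}}_{\tau}\right)_{z}={\mathcal{L}}_{\tau}^{{}^{\text{elliptic}}}\left({\mathbf{X}}_{0}\right)_{z}$ is then immediate, because the third and fourth components of the integrand vectors ${\mathbf{G}}_{1}$, ${\mathbf{G}}_{2}$ do not involve $g$ at all and the first two components merely acquire factors $e^{\mp i\tau}$. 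So the paper's approach eliminates your cross-term cancellation entirely --- the elliptic parameter is ``diagonalized'' in the first-kind gauge --- at the cost of routing through the equivalence of the two integrable systems, whereas your argument is self-contained within Corollary \ref{Poisson 02} (and is the exact mirror of how the paper itself handles the parabolic case in Proposition \ref{Mparabolic}, where it converts first-kind data to second-kind before verifying the matrix identity), at the cost of somewhat heavier bookkeeping.
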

  
    \begin{proof}  
    First, the second proof of Proposition \ref{FIS02} reveals that the Weierstrass triple  $\left(h_{ {}_{\tau} }, \,{\mathcal{M}}_{\tau}, \, {\mathcal{N}}_{\tau}\right)$ of the first kind   
corresponds to the Weierstrass triple of the second kind
 \begin{equation*} \label{MellipticP01}
  \left({g}_{{}_{\tau}}, \, {\mathcal{P}}_{\tau}, \, {\mathcal{Q}}_{\tau}\right)=\left(e^{i \tau} g, \, \mathcal{P}, \, \mathcal{Q}\right).
 \end{equation*} 
Second, the proof of Corollary \ref{Poisson 02} (and Remark \ref{Poisson 02 Remarks}) guarantees the existence of a conformal parameterization 
${\mathbf{X}}_{\tau} : \Omega \to {\mathbb{L}}^{4}$ of the marginally trapped surface in Lorentz-Minkowski space ${\mathbb{L}}^{4}$:
 \begin{equation*}  \label{MellipticP02}
     {\mathbf{X}}_{\tau} := {\mathbf{X}}_{\left(\,h_{ {}_{\tau} }, \,{\mathcal{M}}_{\tau}, \, {\mathcal{N}}_{\tau}\,\right)} =  {\mathbf{X}}_{ 
       \left[\,{g}_{{}_{\tau}}, \, {\mathcal{P}}_{\tau}, \, {\mathcal{Q}}_{\tau}\,\right]}  = {\mathbf{X}}_{ \left[\,e^{i \tau} g, \, \mathcal{P}, \, \mathcal{Q}\,\right] }
 \end{equation*}
 up to an additive vector constant. Corollary \ref{Poisson 02} shows that 
   \begin{equation*}   \label{MellipticP03}  
 {\left(   {\mathbf{X}}_{0}  \right)}_{z}   =  {\mathcal{P}}_{z} \begin{bmatrix}  \frac{1}{\,g\,} \\[4pt]  \; \frac{i}{\,g\,}  \; \\[4pt]  1 \\[4pt]  1 \end{bmatrix} 
  +  {\mathcal{Q}}_{z} \begin{bmatrix} g \\[4pt]  - i g \\[4pt]   -1  \\[4pt]   1 \end{bmatrix}   
\end{equation*} 
and that 
  \begin{equation*}   \label{MellipticP04} 
 {\left(   {\mathbf{X}}_{\tau}  \right)}_{z} = {\mathcal{P}}_{z} \begin{bmatrix}  \frac{1}{\,e^{i \tau} g\,} \\[4pt]  \; \frac{i}{\,e^{i \tau} g\,}  \; \\[4pt]  1 \\[4pt]  1 \end{bmatrix} 
  +  {\mathcal{Q}}_{z} \begin{bmatrix} e^{i \tau} g \\[4pt]  - i e^{i \tau} g \\[4pt]   -1  \\[4pt]   1 \end{bmatrix}  
   =  {\mathcal{L}}_{\tau}^{ {}^{ \text{elliptic} }} \left( \, {\left( {\mathbf{X}}_{0}  \right)}_{z} \, \right)  =  {\left(  {\mathcal{L}}_{\tau}^{ {}^{ \text{elliptic} }}  {\mathbf{X}}_{0}  \right)}_{z}.
\end{equation*} 
Integrating this differential equation yields that,  up to an additive vector constant,   
  \begin{equation*}  \label{MellipticP05} 
       \left(   {\mathbf{X}}_{\tau}  \right)(z) = {\mathcal{L}}_{\tau}^{ {}^{ \text{elliptic} }}  {\mathbf{X}}_{0} (z).  
 \end{equation*}
    \end{proof}

   \begin{prop}[\textbf{Geometric meaning of the  parameter $\eta$ in the hyperbolic deformation in Proposition \ref{FIS03A}}] \label{Mhyperbolic}
  Let the triple $\left(g, \, \mathcal{P}, \, \mathcal{Q}\right)$ be a Weierstrass data of the first kind. Let $\eta \in \mathbb{R}$ be a parameter constant. 
 Then, the following statements hold:
 \begin{enumerate}
 \item[\textbf{(1)}] The triple $\left({g}_{ {}_{\eta} }, \, {\mathcal{P}}_{\eta}, \, {\mathcal{Q}}_{\eta}\right)
 :=\left( e^{\eta} g, \, e^{\eta} {\mathcal{P}}, \, e^{-\eta} \mathcal{Q}  \right)$ becomes a Weierstrass data of the first kind. 
 \item[\textbf{(2)}] The marginally trapped surface ${\Sigma}_{\eta}= {\mathbf{X}}_{\left[\, g_{ {}_{\eta} }, \, {\mathcal{P}}_{\eta }, \,{\mathcal{Q}}_{\eta} \, \right]}  \left( \Omega \right)$ is congruent to the marginally trapped surface ${\Sigma}={\Sigma}_{0}={\mathbf{X}}_{\left[\,g, \, \mathcal{P}, \, \mathcal{Q} \, \right]} \left( \Omega \right) $ in ${\mathbb{L}}^{4}$. More concretely, 
 we have 
   \begin{equation*}  \label{Mhyperbolic02}
         {\mathbf{X}}_{\left[\, g_{ {}_{\eta} }, \, {\mathcal{P}}_{\eta}, \,{\mathcal{Q}}_{\eta} \, \right]}
         = {\mathcal{L}}_{\eta}^{ {}^{ \text{hyperbolic} }  } \left( \, {\mathbf{X}}_{\left[\,g, \, \mathcal{P}, \, \mathcal{Q} \, \right]}(z) \, \right),  
 \end{equation*}
 up to a translation in ${\mathbb{L}}^{4}$. Here, the hyperbolic rotation  ${\mathcal{L}}_{\eta}^{ {}^{ \text{hyperbolic} }}$ in ${\mathbb{L}}^{4}$ denotes the linear transformation represented by the matrix
 \begin{equation*}  \label{Mhyperbolic03}
    {\mathcal{L}}_{\lambda}^{ {}^{ \text{hyperbolic} }} = \begin{bmatrix}
        \; 1 \; & \; 0 \; & \; 0 \; & \; 0 \; \\
    \; 0 \; & \; 1 \; & \; 0 \; & \; 0 \; \\
          \; 0 \; & \; 0 \; & \; \cosh \eta \; & \; \sinh \eta \; \\
      \; 0 \; & \; 0 \; & \; \sinh \eta \; & \; \cosh \eta \; \\   
    \end{bmatrix}.
 \end{equation*} 
 \end{enumerate}
 The item \textbf{(2)} indicates that the 
hyperbolic rotation  ${\mathcal{L}}_{\eta}^{ {}^{ \text{hyperbolic} }}$ of the marginally trapped surface in ${\mathbb{L}}^{4}$ induces the hyperbolic deformation in 
Proposition \ref{FIS03A}.  
  \end{prop}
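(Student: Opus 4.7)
The plan is to mirror the strategy already used in the proofs of Proposition \ref{Mparabolic} and Proposition \ref{Melliptic}: apply Theorem \ref{Poisson 01} to both the undeformed and deformed Weierstrass triples, verify at the level of $\mathbf{X}_z$ that the deformed derivative is obtained from the original by the matrix ${\mathcal{L}}_{\eta}^{ {}^{ \text{hyperbolic} }}$, and then integrate to conclude that ${\mathbf{X}}_{\eta}$ and ${\mathcal{L}}_{\eta}^{ {}^{ \text{hyperbolic} }} {\mathbf{X}}_{0}$ agree up to an additive vector constant in ${\mathbb{L}}^{4}$.

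First, item \textbf{(1)} is exactly Proposition \ref{FIS03A}, so I would only cite it. Next, Theorem \ref{Poisson 01} applied to $\left(g, \mathcal{P}, \mathcal{Q}\right)$ gives the formula for ${\left({\mathbf{X}}_{0}\right)}_{z}$ as the combination of the vectors ${\mathbf{G}}_{1}$ and ${\mathbf{G}}_{2}$ weighted by ${\mathcal{P}}_{z}$ and ${\mathcal{Q}}_{z}$. Applying the same theorem to the deformed triple $\left(e^{\eta} g, e^{\eta} \mathcal{P}, e^{-\eta} \mathcal{Q}\right)$, and noting that $(e^{\eta} \mathcal{P})_{z} = e^{\eta} \mathcal{P}_{z}$ and $(e^{-\eta} \mathcal{Q})_{z} = e^{-\eta} \mathcal{Q}_{z}$, one finds that the first two entries of ${\left({\mathbf{X}}_{\eta}\right)}_{z}$ coincide with those of ${\left({\mathbf{X}}_{0}\right)}_{z}$ (the factors $e^{\eta}$ and $e^{-\eta}$ cancel against the $1/(e^{\eta} g)$ and $e^{\eta} g$ arising from $g_{\eta}$), while the third and fourth entries become
\begin{equation*}
e^{\eta} {\mathcal{P}}_{z} - e^{-\eta} {\mathcal{Q}}_{z} \quad \text{and} \quad e^{\eta} {\mathcal{P}}_{z} + e^{-\eta} {\mathcal{Q}}_{z}.
\end{equation*}

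The key algebraic identity to invoke is $\cosh \eta \pm \sinh \eta = e^{\pm \eta}$, which lets me rewrite the above pair as the Lorentz boost of $\left({\mathcal{P}}_{z} - {\mathcal{Q}}_{z}, {\mathcal{P}}_{z} + {\mathcal{Q}}_{z}\right)$, i.e.\ of the third and fourth entries of ${\left({\mathbf{X}}_{0}\right)}_{z}$. This yields the pointwise equality ${\left({\mathbf{X}}_{\eta}\right)}_{z} = {\mathcal{L}}_{\eta}^{ {}^{ \text{hyperbolic} }}\, {\left({\mathbf{X}}_{0}\right)}_{z} = {\left({\mathcal{L}}_{\eta}^{ {}^{ \text{hyperbolic} }} {\mathbf{X}}_{0}\right)}_{z}$, since ${\mathcal{L}}_{\eta}^{ {}^{ \text{hyperbolic} }}$ has constant entries. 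Integrating (exactly as done at the end of the proofs of Proposition \ref{Mparabolic} and Proposition \ref{Melliptic}) then gives ${\mathbf{X}}_{\eta}(z) = {\mathcal{L}}_{\eta}^{ {}^{ \text{hyperbolic} }}\left({\mathbf{X}}_{0}(z)\right)$ up to an additive vector constant, proving item \textbf{(2)}.

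There is no serious obstacle: everything reduces to the identity $\cosh \eta \pm \sinh \eta = e^{\pm \eta}$ and a direct inspection of four entries of ${\mathbf{X}}_{z}$. The only thing to keep track of is the bookkeeping that the first two components are unchanged under the deformation because the $e^{\pm \eta}$ scalings of $\mathcal{P}$, $\mathcal{Q}$ cancel with the reciprocal scalings hidden inside $1/g_{\eta}$ and $g_{\eta}$; this is the manifestation of the fact that ${\mathcal{L}}_{\eta}^{ {}^{ \text{hyperbolic} }}$ acts trivially on the $(x_{1}, x_{2})$-plane and as a genuine boost on the $(x_{3}, x_{4})$-plane.
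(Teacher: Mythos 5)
Your proposal is correct and follows essentially the same route as the paper: cite Proposition \ref{FIS03A} for item \textbf{(1)}, write out ${\left({\mathbf{X}}_{\eta}\right)}_{z}$ via the first-kind representation for the triple $\left(e^{\eta}g,\, e^{\eta}\mathcal{P},\, e^{-\eta}\mathcal{Q}\right)$, observe that the $e^{\pm\eta}$ factors cancel in the first two components while the identity $\cosh\eta \pm \sinh\eta = e^{\pm\eta}$ turns the last two components $\left(e^{\eta}{\mathcal{P}}_{z} - e^{-\eta}{\mathcal{Q}}_{z},\, e^{\eta}{\mathcal{P}}_{z} + e^{-\eta}{\mathcal{Q}}_{z}\right)$ into the boost of $\left({\mathcal{P}}_{z}-{\mathcal{Q}}_{z},\, {\mathcal{P}}_{z}+{\mathcal{Q}}_{z}\right)$, and integrate ${\left({\mathbf{X}}_{\eta}\right)}_{z} = {\left({\mathcal{L}}_{\eta}^{{}^{\text{hyperbolic}}}{\mathbf{X}}_{0}\right)}_{z}$ to conclude up to an additive vector constant. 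If anything, your citation of Theorem \ref{Poisson 01} is the more accurate one, since the paper's proof nominally invokes Corollary \ref{Poisson 02} but actually uses the first-kind formulas of Theorem \ref{Poisson 01}.
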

  
    \begin{proof}  The deformation in the item \textbf{(1)} is proved in Proposition \ref{FIS03A}. 
 Corollary \ref{Poisson 02} guarantees the existence of a conformal parameterization 
${\mathbf{X}}_{\eta}: \Omega \to {\mathbb{L}}^{4}$  of the marginally trapped surface in Lorentz-Minkowski space ${\mathbb{L}}^{4}$:
\begin{equation*}   \label{MhyperbolicP03}  
    {\mathbf{X}}_{\eta} := {\mathbf{X}}_{\left[\, g_{ {}_{\eta} }, \, {\mathcal{P}}_{\eta}, \,{\mathcal{Q}}_{\eta} \, \right]} = 
    {\mathbf{X}}_{\left[\, e^{\eta} g, \, e^{\eta} {\mathcal{P}}, \, e^{-\eta} \mathcal{Q} \, \right]}.
\end{equation*} 
 Corollary \ref{Poisson 02} implies that 
\begin{equation*}   \label{MhyperbolicP04}  
  {\left( {\mathbf{X}}_{0} \right)}_{z}  
 =  {\mathcal{P}}_{z} \begin{bmatrix}  \frac{1}{\,g\,} \\[4pt]  \; \frac{i}{\,g\,}  \; \\[4pt]  1 \\[4pt]  1 \end{bmatrix} 
  +  {\mathcal{Q}}_{z} \begin{bmatrix} g \\[4pt]  - i g \\[4pt]   -1  \\[4pt]   1 \end{bmatrix}
\end{equation*} 
and that 
  \begin{equation*}   \label{MhyperbolicP05} 
  {\left( {\mathbf{X}}_{\eta} \right)}_{z}  
 =  e^{\eta} {\mathcal{P}}_{z} \begin{bmatrix}  \frac{1}{\,e^{\eta} g\,} \\[4pt]  \; \frac{i}{\,e^{\eta} g\,}  \; \\[4pt]  1 \\[4pt]  1 \end{bmatrix} 
  + e^{-\eta} {\mathcal{Q}}_{z} \begin{bmatrix} e^{\eta} g \\[4pt]  - i e^{\eta} g \\[4pt]   -1  \\[4pt]   1 \end{bmatrix} 
     =  {\mathcal{L}}_{\eta}^{ {}^{ \text{hyperbolic} }} \left( \, {\left( {\mathbf{X}}_{0}  \right)}_{z} \, \right)  =  {\left(  {\mathcal{L}}_{\eta}^{ {}^{ \text{hyperbolic} }}  {\mathbf{X}}_{0}  \right)}_{z}.
\end{equation*} 
Integrating this differential equation yields that,  up to an additive vector constant,   
  \begin{equation*}   \label{MhyperbolicP06} 
         {\mathbf{X}}_{\eta}(z) = {\mathcal{L}}_{\eta}^{ {}^{ \text{hyperbolic} }  } \left( \,    {\mathbf{X}}_{0}(z)  \, \right).  
 \end{equation*}
  \end{proof}

 \section{Examples of marginally trapped surfaces} \label{deformed examples}

  We use our integrable systems and Weierstrass representations to construct explicit examples of the marginally trapped surface in ${\mathbb{L}}^{4}$ with nowhere vanishing mean curvature vector.  We adopt the complex coordinate $z=u+iv$ with $u, v \in \mathbb{R}$. To use Corollary \ref{Poisson 02}, we need to prepare the Weierstrass triple $\left(h, \mathcal{M}, \mathcal{N} \right)$ of the second kind, which solves the system 
    \begin{equation*}   \label{DE00p01} 
 \begin{cases}
    h_{\overline{\,z\,} } =0, \\
   {\mathcal{M}}_{ z \overline{\,z\,} }= \left( \textrm{Re} \,  h \right)  \,  {\mathcal{N}}_{ z \overline{\,z\,} }, \\ 
  {\mathcal{M}}_{z} - \left( \textrm{Re} \,  h \right)  \,  { \mathcal{N}}_{z} \neq 0.
\end{cases}   
   \end{equation*}  
  We take the normalization with the nowhere vanishing holomorphic function 
   \begin{equation*}   \label{DE00p02} 
   h(z) =  e^{i z} = e^{-v} \left(\cos u + i \sin u \right) \neq 0.
   \end{equation*}
Then, we need to find a pair $\left( \mathcal{M}(u, v), \,  \mathcal{N}(u, v) \right)$ of $\mathbb{R}$-valued functions solving the Poisson equation of the form
   \begin{equation*}   \label{DE00p03} 
     {\mathcal{M}}_{uu} +    {\mathcal{M}}_{vv} = e^{-v} \cos u \left(   {\mathcal{N}}_{uu} +    {\mathcal{N}}_{vv} \right).  
   \end{equation*}
We observe that this \emph{linear} equation admits the following two independent solutions:
    \begin{eqnarray}   \label{DE00p04} 
        \left( {\mathcal{M}}_{1} (u, v), \,  {\mathcal{N}}_{1}(u, v) \right) &=& \left( \sinh u \sin u, \, e^v \cosh u \right),  \\ 
    \left( {\mathcal{M}}_{2} (u, v), \,  {\mathcal{N}}_{2}(u, v) \right) &=&  \left(  - \cos u \cos v, \, e^v \sin v \right).
   \end{eqnarray}
   
Using these two solutions and the linear structure of the above Poisson equation, we construct families of marginally trapped surfaces with nowhere vanishing mean curvature vector.
 
 \begin{exmp}[\textbf{A family of marginally trapped surfaces in ${\mathbb{L}}^{4}$ connecting from a CMC-$1$ surface in ${\mathbb{H}}^{3}$ to a CMC-$1$ surface in  ${\mathbb{S}}^{3}_{1}$}] \label{Bryant01} 
   We construct a one parameter family ${ \left\{ {\Sigma}_{\theta} \right\}} _{\theta  \in \left[0, \frac{\, \pi \,}{2} \right]}$ of marginally trapped surfaces with nowhere vanishing mean curvature vector field. Moreover, the surface ${\Sigma}_{\theta}$ lies in the hypersurface in ${\mathbb{L}}^{4}$:
    \begin{equation*}   \label{DE01p01} 
       {{x}_{1}}^{2} +    {{x}_{2}}^{2} +    {{x}_{3}}^{2} -    {{x}_{4}}^{2} = - \cos \left( 2 \theta \right).
   \end{equation*} 
 Given $\theta \in \left[0, \frac{\, \pi \,}{2} \right]$, we choose an open domain ${\Omega}_{\theta} \subset {\mathbb{R}}^{2}$ 
 satisfying the condition
   \begin{equation*}   \label{DE01p02} 
       \cos \theta \cosh u + \sin \theta \cos v \neq 0,
   \end{equation*}
 for all $\left(u, v\right) \in {\Omega}_{\theta}$. For instance, if $\theta \in \left[0, \frac{\, \pi \,}{4}\right)$ we take   
    \begin{equation*}   \label{DE01p03} 
 {\Omega}_{\theta} = {\mathbb{R}}^{2}.
   \end{equation*}
 For $\theta = \frac{\, \pi \,}{4}$, we take 
     \begin{equation*}   \label{DE01p04} 
 {\Omega}_{\frac{\, \pi \,}{4}} = \left\{ \,  \left( u, v  \right)  \in {\mathbb{R}}^{2}  \; \vert \; u>0, \; v \in \left( -\pi, \pi \right) \, \right\}.
   \end{equation*}
 For  $\theta = \frac{\, \pi \,}{2}$, we take
     \begin{equation*}   \label{DE01p05} 
 {\Omega}_{\frac{\, \pi \,}{2}} = \left\{ \,  \left( u, v  \right)  \in {\mathbb{R}}^{2}  \; \vert \; u \in \mathbb{R}, \; v \in \left( - \frac{\, \pi \,}{2},  \frac{\, \pi \,}{2} \right) \, \right\}.
   \end{equation*}
  Given $\theta \in \left[0, \frac{\, \pi \,}{2} \right]$, we define the triple $\left( {h}_{\theta} , \, {\mathcal{M}}_{\theta}, \, {\mathcal{N}}_{\theta} \right)$ by 
  \begin{eqnarray*}   \label{DE01p06} 
             {h}_{\theta} (z) &=& e^{i z} = e^{-v} \left(\cos u + i \sin u \right) \neq 0, \\
      {\mathcal{M}}_{\theta} \left( z \right) &=& \cos \theta \sinh u \sin u - \sin \theta \cos u \cos v, \\   
     {\mathcal{N}}_{\theta} \left( z \right) &=&  {e}^{v} \left( \cos \theta \cosh u + \beta \sin v  \right).        
  \end{eqnarray*} 
 The triple $\left(h, \mathcal{M}, \mathcal{N} \right) = \left( {h}_{\theta} , \, {\mathcal{M}}_{\theta}, \, {\mathcal{N}}_{\theta} \right)$ is a Weierstrass data of the second kind: 
   \begin{eqnarray*}    \label{DE01p07} 
             h_{  \overline{\,z\,} } &=& 0, \\
        4 {\mathcal{N}}_{ z \overline{\,z\,} } &=& e^{v}  \left( 2 \cos \theta \cosh u + \sin \theta \cos v \right), \\    
          4 {\mathcal{M}}_{ z \overline{\,z\,} } &=&  \cos u \left( 2 \cos \theta \cosh u + \sin \theta \cos v \right)
       =     \left( \textrm{Re} \,  h \right)  \,\left( 4 {\mathcal{N}}_{ z \overline{\,z\,} } \right), \\   
  {\mathcal{M}}_{z} - \left( \textrm{Re} \,  h \right)  \,  { \mathcal{N}}_{z} 
  &=& \frac{i}{\,2\,} e^{-iu} \left(  \cos \theta \cosh u + \sin \theta \cos v \right) \neq 0.        
  \end{eqnarray*} 
Applying Corollary \ref{Poisson 02} to the triple $\left( {h}_{\theta} , \, {\mathcal{M}}_{\theta}, \, {\mathcal{N}}_{\theta} \right)$, we can find 
 a conformal patch
${\mathbf{X}}_{\theta} := {\mathbf{X}}_{\left( {h}_{\theta} , \, {\mathcal{M}}_{\theta}, \, {\mathcal{N}}_{\theta} \right)} : {\Omega}_{\theta} \to {\mathbb{L}}^{4}$ of the marginally trapped surface 
${\Sigma}_{\theta}$ in ${\mathbb{L}}^{4}$:
   \begin{equation*}     \label{DE01p08} 
{\mathbf{X}}_{\theta} (u, v)  =   \cos \theta \, {\mathbf{X}}_{0}  (u, v)  +  \sin \theta \,   {\mathbf{X}}_{\frac{\, \pi \,}{2}}  (u, v), 
\end{equation*} 
where we introduce
   \begin{equation*}     \label{DE01p09} 
 {\mathbf{X}}_{0}  (u, v)   =  \begin{bmatrix}  \sinh u \sin u  \\  \sinh u \cos u    \\   \cosh u \sinh v  \\   \cosh u \cosh v    \end{bmatrix} \quad  \text{and} \quad
  {\mathbf{X}}_{\frac{\, \pi \,}{2}}  (u, v)   =  \begin{bmatrix}   - \cos u \cos v  \\  \sin u \cos v    \\  \cosh v   \sin v \\   \sinh v   \sin v    \end{bmatrix}.
 \end{equation*} 
The metric of the spacelike surface ${\Sigma}_{\theta}$ induced by the patch ${\mathbf{X}}_{\theta}$ is
    \begin{equation*}   \label{DE01p10} 
  {\Lambda}_{\theta}(z)  {\vert dz \vert}^{2}  
   =  {\left( \cos \theta \cosh u + \sin \theta \cos v  \right)}^{2} \left( \, du^2 + dv^2 \, \right).
   \end{equation*}
We claim that the mean curvature vector $\mathbf{H}$ vanishes nowhere on the surface ${\Sigma}_{\theta}$: 
    \begin{equation*}   \label{DE01p11} 
   \mathbf{H} =  
      {\triangle}_{    {\Lambda}_{\theta}(z)    {\vert dz \vert}^{2}    }  {\mathbf{X}}_{\theta}  
   = \frac{1}{\,   {\Lambda}_{\theta}(z)  \,} \left(\,   {\left( \,  {\mathbf{X}}_{\theta}   \, \right)}_{uu} +   {\left( \, {\mathbf{X}}_{\theta}    \, \right)}_{vv}    \, \right). 
          \end{equation*}
Recall that $ \cos \theta \cosh u + \sin \theta \cos v \neq 0$ for all $\left(u, v \right) \in {\Omega}_{\theta}$. The null vector
   \begin{equation*}   \label{DE01p12} 
    {\left( \,  {\mathbf{X}}_{\theta}   \, \right)}_{uu} +   {\left( \, {\mathbf{X}}_{\theta}    \, \right)}_{vv}
      =  2 \left( \cos \theta \cosh u + \sin \theta \cos v  \right)  \begin{bmatrix}   \cos u  \\  \; - \sin u \;    \\   \sinh v  \\  \cosh v   \end{bmatrix}
   \end{equation*}
    vanishes nowhere because it is not possible that $\cos u$ and $-\sin u$ can be zero simultaneously.
  It now remains to show that the the spacelike surface ${\Sigma}_{\theta}$, defined by the patch
     \begin{equation*}   \label{DE01p13} 
{\mathbf{X}}_{\theta}  =   \cos \theta \, {\mathbf{X}}_{0}   +  \sin \theta \,   {\mathbf{X}}_{\frac{\, \pi \,}{2}}
   \end{equation*} 
   lies in the hypersurface $   {{x}_{1}}^{2} +    {{x}_{2}}^{2} +    {{x}_{3}}^{2} -    {{x}_{4}}^{2} = - \cos \left( 2 \theta \right)$ in ${\mathbb{L}}^{4}$. We have 
  \begin{eqnarray*}   \label{DE01p14} 
       \langle {\mathbf{X}}_{0}, {\mathbf{X}}_{0} \rangle &=& 
        \left\langle   \begin{bmatrix}  \sinh u \sin u  \\  \sinh u \cos u    \\   \cosh u \sinh v  \\   \cosh u \cosh v    \end{bmatrix}, 
       \begin{bmatrix}  \sinh u \sin u  \\  \sinh u \cos u    \\   \cosh u \sinh v  \\   \cosh u \cosh v    \end{bmatrix}  \right\rangle =-1, \\
             \langle {\mathbf{X}}_{0}, {\mathbf{X}}_{\frac{\, \pi \,}{2}} \rangle &=&  \left\langle   \begin{bmatrix}  \sinh u \sin u  \\  \sinh u \cos u    \\   \cosh u \sinh v  \\   \cosh u \cosh v    \end{bmatrix}, 
     \begin{bmatrix}   - \cos u \cos v  \\  \sin u \cos v    \\  \cosh v   \sin v \\   \sinh v   \sin v    \end{bmatrix}  \right\rangle =0,  \\
            \langle {\mathbf{X}}_{\frac{\, \pi \,}{2}}, {\mathbf{X}}_{\frac{\, \pi \,}{2}} \rangle &=&  \left\langle   \begin{bmatrix}   - \cos u \cos v  \\  \sin u \cos v    \\  \cosh v   \sin v \\   \sinh v   \sin v    \end{bmatrix}, 
     \begin{bmatrix}   - \cos u \cos v  \\  \sin u \cos v    \\  \cosh v   \sin v \\   \sinh v   \sin v    \end{bmatrix}  \right\rangle=1.
   \end{eqnarray*} 
We conclude that 
    \begin{equation*}   \label{DE01p15} 
        \langle {\mathbf{X}}_{\theta}, {\mathbf{X}}_{\theta}  \rangle
        =   \langle \cos \theta \, {\mathbf{X}}_{0}   +  \sin \theta \,   {\mathbf{X}}_{\frac{\, \pi \,}{2}}, \cos \theta \, {\mathbf{X}}_{0}   +  \sin \theta \,   {\mathbf{X}}_{\frac{\, \pi \,}{2}} \rangle
        =   - {\cos}^{2} \theta +  {\sin}^{2} \theta = - \cos \left( 2 \theta \right).
   \end{equation*} 
 \end{exmp} 

\begin{rem}  \label{DE01 Remarks} 
\item[\textbf{(1)}] Taking $\theta=0$ in Example \ref{Bryant01} yields a marginally trapped surface ${\Sigma}_{0} \subset {\mathbb{L}}^{4}$ parameterized by 
    \begin{equation*}    \label{DE01RA01} 
 {\mathbf{X}}_{0}  (u, v)   =  \begin{bmatrix}  \sinh u \sin u  \\  \sinh u \cos u    \\   \cosh u \sinh v  \\   \cosh u \cosh v    \end{bmatrix}, \quad (u, v) \in {\mathbb{R}}^{2}.
    \end{equation*} 
The surface ${\Sigma}_{0}$ is a Bryant surface \cite{Bryant 1987}, which lies in the hyperboloid model of the three dimensional hyperbolic space ${\mathbb{H}}^{3} \subset {\mathbb{L}}^{4}$ 
given by the hypersurface
     \begin{equation*}    \label{DE01RA02} 
  {{x}_{1}}^{2} +    {{x}_{2}}^{2} +    {{x}_{3}}^{2} -    {{x}_{4}}^{2} = -1,
     \end{equation*}  
 and has mean curvature one in ${\mathbb{H}}^{3}$.  The surface ${\Sigma}_{0}$ is called 
\emph{catenoid cousin} (\cite[Example 2]{Bryant 1987} and \cite[Example 5.6]{DFS2021}) in ${\mathbb{H}}^{3}$ in the sense that it is locally isometric to a catenoid, which is a rotational surface with zero mean curvature in  ${\mathbb{R}}^{3}$. The conformal 
 metric on the surface ${\Sigma}_{0}$ induced by the patch ${\mathbf{X}}_{0}  (u, v)$ is 
     \begin{equation*}    \label{DE01RA03} 
 {\cosh}^{2} u \left( du^2 + dv^2 \right), 
  \end{equation*} 
 which is the conformal metric induced by the 
 patch  $ {\mathbf{X}}_{\text{cat}}  (u, v)$  of the catenoid ${\Sigma}_{\text{cat}}$ in  the three dimensional Euclidean space ${\mathbb{R}}^{3} =  {\mathbb{L}}^{4} \cap \{ x_{4}=0 \}$:
    \begin{equation*}    \label{DE01RA04} 
 {\mathbf{X}}_{\text{cat}}   (u, v)   =  \begin{bmatrix}  \cosh u \sin v  \\  \cosh u \cos v    \\  u  \\   0   \end{bmatrix}, \quad (u, v) \in {\mathbb{R}}^{2}.
    \end{equation*} 
\item[\textbf{(2)}] Taking $\theta=\frac{\pi}{2}$ in Example \ref{Bryant01} yields a marginally trapped surface ${\Sigma}_{\frac{\, \pi \,}{2}} \subset {\mathbb{L}}^{4}$ parameterized by 
    \begin{equation*}    \label{DE01RB01} 
 {\mathbf{X}}_{\frac{\, \pi \,}{2}}  (u, v)   =  \begin{bmatrix}   - \cos u \cos v  \\  \sin u \cos v    \\  \cosh v   \sin v \\   \sinh v   \sin v    \end{bmatrix}, \quad (u, v) 
 \in  \mathbb{R} \times  \left( - \frac{\, \pi \,}{2},  \frac{\, \pi \,}{2} \right).
    \end{equation*} 
The surface ${\Sigma}_{\frac{\, \pi \,}{2}}$ is lies in the model of the three dimensional de-Sitter space ${\mathbb{S}}^{3}_{1} \subset {\mathbb{L}}^{4}$ 
given by the hypersurface
     \begin{equation*}    \label{DE01RB02} 
  {{x}_{1}}^{2} +    {{x}_{2}}^{2} +    {{x}_{3}}^{2} -    {{x}_{4}}^{2} = 1,
     \end{equation*}  
 and has mean curvature one in ${\mathbb{S}}^{3}_{1}$. The surface ${\Sigma}_{\frac{\, \pi \,}{2}}$ can be viewed as a \emph{hyperbolic catenoid cousin} in ${\mathbb{S}}^{3}_{1}$.
 The conformal metric on the surface ${\Sigma}_{\frac{\, \pi \,}{2}}$ induced by the patch $ {\mathbf{X}}_{\frac{\, \pi \,}{2}}  (u, v)$ is 
     \begin{equation*}    \label{DE01RB03} 
 {\cos}^{2} v \left( du^2 + dv^2 \right), 
  \end{equation*} 
 which is the conformal metric induced by the 
 patch  $ {\mathbf{X}}_{\text{hcat}}  (u, v)$  of the hyperbolic catenoid ${\Sigma}_{\text{hcat}}$ in 
 the three dimensional Lorentz-Minkowski space ${\mathbb{L}}^{3} = {\mathbb{L}}^{4} \cap \{ x_{1}=0 \}$:
    \begin{equation*}    \label{DE01RB04} 
 {\mathbf{X}}_{\text{hcat}}   (u, v)   =  \begin{bmatrix}  0  \\  v   \\  \cos v \sinh u  \\    \cos v \cosh u   \end{bmatrix}, \quad (u, v) \in  \mathbb{R} \times  \left( - \frac{\, \pi \,}{2},  \frac{\, \pi \,}{2} \right).
    \end{equation*} 
   \item[\textbf{(3)}] Taking $\theta=\frac{\pi}{4}$ in Example \ref{Bryant01} yields a marginally trapped surface ${\Sigma}_{\frac{\, \pi \,}{4}} \subset {\mathbb{L}}^{4}$ parameterized by 
    \begin{equation*}    \label{DE01RC01} 
      {\mathbf{X}}_{\frac{\, \pi \,}{4}}  (u, v)   
 =  \frac{1}{\,\sqrt{\,2\,}\,}  \begin{bmatrix}  \sinh u \sin u  \\  \sinh u \cos u    \\   \cosh u \sinh v  \\   \cosh u \cosh v    \end{bmatrix} 
  + \frac{1}{\,\sqrt{\,2\,}\,}   \begin{bmatrix}   - \cos u \cos v  \\  \sin u \cos v    \\  \cosh v   \sin v \\   \sinh v   \sin v    \end{bmatrix}, 
   \quad  (u, v) \in  \mathbb{R} \times  \left( - \frac{\, \pi \,}{2},  \frac{\, \pi \,}{2} \right).
    \end{equation*}  
The surface ${\Sigma}_{\frac{\, \pi \,}{4}}$ is null in the sense that it lies in the light cone $  {{x}_{1}}^{2} +    {{x}_{2}}^{2} +    {{x}_{3}}^{2} -    {{x}_{4}}^{2} = 0$. 
\end{rem}
 
 \begin{exmp}[\textbf{Two parameter deformations of a catenoid cousin in ${\mathbb{H}}^{3}$ to marginally trapped surfaces 
 in ${\mathbb{L}}^{4}$}]  \label{Bryant02}
 We take $\Omega={\mathbb{R}}^{2} \equiv \mathbb{C}$ and the complex coordinate. 
We produce a two parameter family of marginally trapped surfaces with nowhere vanishing mean curvature vector field.
 Given a pair $\left(\alpha, \beta\right)$ of real constants such that 
  $ { \alpha}^2 + {\beta}^2 <1$, we define the triple $\left({h}_{\left(\alpha, \beta\right)} , \, {\mathcal{M}}_{\left(\alpha, \beta\right)} , \, {\mathcal{N}}_{\left(\alpha, \beta\right)} \right)$ by 
  \begin{eqnarray*}   \label{DE02p03} 
             {h}_{\left(\alpha, \beta\right)} (z) &=& e^{i z} = e^{-v} \left(\cos u + i \sin u \right) \neq 0, \\
             {\mathcal{M}}_{\left(\alpha, \beta\right)}  \left( z \right) &=&  \alpha u + \beta v + \sinh u \sin u, \\   
             {\mathcal{N}}_{\left(\alpha, \beta\right)}  \left( z \right) &=&  {e}^{v} \cosh u.        
  \end{eqnarray*} 
 The triple $\left(h, \mathcal{M}, \mathcal{N} \right) = \left({h}_{\left(\alpha, \beta\right)} , \, {\mathcal{M}}_{\left(\alpha, \beta\right)} , \, {\mathcal{N}}_{\left(\alpha, \beta\right)} \right)$ is a Weierstrass data of the second kind: 
   \begin{eqnarray*}   \label{DE02p04} 
             h_{  \overline{\,z\,} } &=& 0, \\
          4 {\mathcal{M}}_{ z \overline{\,z\,} } &=& 2 \cos u \cosh u =  \left(  e^{-v} \cos u \right)   \left( 2 e^{v} \cosh u \right) = 
           \left( \textrm{Re} \,  h \right)  \,\left( 4 {\mathcal{N}}_{ z \overline{\,z\,} } \right), \\   
  {\mathcal{M}}_{z} - \left( \textrm{Re} \,  h \right)  \,  { \mathcal{N}}_{z} 
  &=& \left(  \alpha + \cosh u \sin u \right) + i \left( - \beta + \cosh u \cos u \right) \neq 0.        
  \end{eqnarray*} 
 The proof of the last statement is given as follows. Assume to the contrary that there exists a pair $\left(u, v\right)$ of real numbers such that
 $  {\mathcal{M}}_{z} - \left( \textrm{Re} \,  h \right)  \,  { \mathcal{N}}_{z}=0$, or equivalently, that 
   \begin{equation*}   \label{DE02p05} 
      \cosh u \sin u  = -  \alpha \quad \text{and} \quad  \cosh u \cos u =  \beta.
  \end{equation*} 
 Combining the assumption $a^2 + b^2 <1$ and these two conditions yields 
    \begin{equation*}   \label{DE02p06} 
     1 >  { \alpha}^2 + {\beta}^2 = {\left( \cosh u \sin u  \right)}^{2} + {\left( \cosh u \cos u \right)}^{2} = {\cosh}^{2} u \geq 1,
  \end{equation*} 
  which is a contradiction. Applying Corollary \ref{Poisson 02} to the triple $\left(h, \, \mathcal{M}, \, \mathcal{N}\right)$, we can find 
 a conformal patch
${\mathbf{X}}_{\left(\alpha, \beta\right)} := {\mathbf{X}}_{\left(\,  h, \, {\mathcal{M}}, \, {\mathcal{N}}  \, \right)} : {\mathbb{R}}^{2} \to {\mathbb{L}}^{4}$ of the marginally trapped surface 
${\Sigma}_{\left(\alpha, \beta\right)}$ in ${\mathbb{L}}^{4}$:
   \begin{equation*}   \label{DE02p07} 
{\mathbf{X}}_{\left(\alpha, \beta\right)} (u, v) 
 =   \alpha \begin{bmatrix}   u  \\  v    \\   - e^{-v} \sin u  \\  e^{-v} \sin u   \end{bmatrix} 
  + \beta  \begin{bmatrix}   v  \\  -u   \\   e^{-v} \cos u  \\   - e^{-v} \cos u   \end{bmatrix} 
 + \begin{bmatrix}  \sinh u \sin u  \\  \sinh u \cos u    \\   \cosh u \sinh v  \\   \cosh u \cosh v    \end{bmatrix}.
\end{equation*} 
The metric of the spacelike surface ${\Sigma}_{\left(\alpha, \beta\right)}$ induced by the patch ${\mathbf{X}}_{\left(\alpha, \beta\right)}$ is
    \begin{equation*}   \label{DE02p08} 
  {\Lambda}_{\left(\alpha, \beta\right)}(z)  {\vert dz \vert}^{2}  
   = \left( \,  {\left(  \alpha + \cosh u \sin u \right)}^{2}  + {\left( - \beta + \cosh u \cos u \right)}^{2}   \, \right) \left( \, du^2 + dv^2 \, \right).
   \end{equation*}
   Finally, we claim that the mean curvature vector $\mathbf{H}$ vanishes nowhere on the surface ${\Sigma}_{\left(\alpha, \beta\right)}$: 
     \begin{equation*}   \label{DE02p09} 
   \mathbf{H} =  
      {\triangle}_{    {\Lambda}_{\left(\alpha, \beta\right)}(z)    {\vert dz \vert}^{2}    }  {\mathbf{X}}_{\left(\alpha, \beta\right)}  
   = \frac{1}{\,   {\Lambda}_{\left(\alpha, \beta\right)}(z)  \,} \left(\,   {\left( \,  {\mathbf{X}}_{\left(\alpha, \beta\right)}   \, \right)}_{uu} +   {\left( \, {\mathbf{X}}_{\left(\alpha, \beta\right)}    \, \right)}_{vv}    \, \right). 
          \end{equation*}
 However, the null vector
   \begin{equation*}   \label{DE02p10} 
    {\left( \,  {\mathbf{X}}_{\left(\alpha, \beta\right)}   \, \right)}_{uu} +   {\left( \, {\mathbf{X}}_{\left(\alpha, \beta\right)}    \, \right)}_{vv}
      =  2 \cosh u \begin{bmatrix}   \cos u  \\  \; - \sin u \;    \\   \sinh v  \\  \cosh v   \end{bmatrix}
   \end{equation*}
    vanishes nowhere because it is not possible that $\cos u$ and $-\sin u$ can be zero simultaneously.
  \end{exmp}

\bigskip

\begin{bibdiv}
\begin{biblist}

\bib{AK1999}{article}{
   author={Aiyama, R. },
     author={Akutagawa, K.}, 
   title={Kenmotsu-Bryant type representation formulas for constant mean curvature surfaces in and ${\mathbb{H}}^{3}\left({c}^{2}\right)$ and ${\mathbb{S}}^{3}_{1}\left(-{c}^{2}\right)$},
   journal={Ann. Global Anal. Geom.},
   volume={17},
   date={1999},
   number={1},
   pages={49--75},
} 

\bib{AGM2005}{article}{
  author={Aledo, Juan A.},
  author={G\'{a}lvez, Jos\'{e} A.},
  author={Mira, Pablo},
   title={Marginally trapped surfaces in ${\mathbb{L}}^{4}$ and an extended Weierstrass-Bryant representation},
   journal={Ann. Global Anal. Geom.},
   volume={28},
   date={2005},
   number={4},
   pages={395--415},
} 

\bib{Bryant 1987}{article}{
   author={Bryant, Robert L.},
   title={Surfaces of mean curvature one in hyperbolic space},
   journal={Ast\'{e}risque},
   volume={154-155},
   date={1987},
   number={12},
   pages={321--347},
}

\bib{DFS2021}{article}{
   author={Dussan, M. P.},
      author={Filho, A. P. Franco},
  author={Sim\~{o}es, P.},   
   title={Spacelike surfaces in $ {\mathbb{L}}^{4}$ with null mean curvature vector and the nonlinear Riccati partial differential equation},
   journal={Nonlinear Anal.},
   volume={207},
   date={2021},
   number={112271},
   pages={19 pp. },
}

\bib{Kobayashi 1983}{article}{
author={Kobayashi, Osamu},
   title={Maximal surfaces in the $3$-dimensional Minkowski space ${\mathbb{L}}^{3}$},
   journal={Tokyo J. Math.},
   volume={6},
   date={1983},
   number={2},
   pages={297--309},
} 
 
\bib{Liu 2013}{article}{
   author={Liu, Huili},
   title={Weierstrass type representation for marginally trapped surfaces in Minkowski $4$-space},
   journal={Math. Phys. Anal. Geom.},
   volume={16},
   date={2013},
   number={2},
   pages={171--178},
}

 \end{biblist}
\end{bibdiv}

\bigskip

\end{document}